\documentclass[preprint,12pt]{elsarticle}




\usepackage{amssymb}
\usepackage{graphicx, amsmath}
\usepackage{amsfonts}
\usepackage{verbatim}
\usepackage{latexsym}
\usepackage{ amsthm}
\usepackage{appendix}

\newcommand{\allowpagebreak}
\allowdisplaybreaks






\newtheorem{thm}{Theorem}[section]
\newtheorem{lem}{Lemma}[section]
\newtheorem{cor}{Corollary}[section]
\newtheorem{defi}{Definition}[section]
\newtheorem{assp}{Assumption}[section]
\newtheorem{ex}{Example}[section]
\newtheorem{prop}{Proposition}[section]
\newtheorem{rem}{Remark}[section]

 \allowdisplaybreaks
\begin{document}

\begin{frontmatter}



\title{Backward Stochastic Differential Equations with Markov Chains and The Application: Homogenization of PDEs System\fnref{fn2}}

\author[a]{Huaibin Tang\fnref{fn1}}
\ead{tanghuaibin@gmail.com}
\address[a]{School of Mathematics, Shandong University, Jinan, Shandong 250100, P. R. China.}

\author[a]{ Zhen Wu\corref{cor1} }

\ead{wuzhen@sdu.edu.cn}
\cortext[cor1]{Corresponding author. Tel:  86-531-88369577,
                  Fax: 86-531-88365550.}
\fntext[fn1]{Current address: INRIA-IRISA, Campus de Beaulieu, 35042 Rennes Cedex, France.}
\fntext[fn2]{This work was supported by Natural Science Foundations
of China (No. 10671112) and Shandong Province (No. JQ200801 and
2008BS01024), the National Basic Research Program of China (973
Program, No. 2007CB814904) and the Science Fund for Distinguished
Young Scholars of Shandong University (No. 2009JQ004).}

\begin{abstract}
Stemmed from the derivation of the optimal control to a stochastic
linear-quadratic  control problem with Markov jumps, we study one
kind of backward stochastic differential equations (BSDEs) that the
generator  $f$ is affected by a  Markovian switching.
Then, the case that the Markov chain is involved in a large state
space is considered. Following the classical approach, a
hierarchical approach is adopted to reduce the complexity and a
singularly perturbed Markov chain is involved. We will study the
asymptotic property of   BSDE with the singularly perturbed
Markov chain. At last, as an application of our theoretical result,
we show the homogenization of one system of partial differential
equations (PDEs) with a singularly perturbed Markov chain.

\end{abstract}

\begin{keyword}
BSDE \sep  Markov chain \sep  weak convergence \sep homogenization.


\end{keyword}

\end{frontmatter}

\section{Introduction} \label{sec: Introduction}
The study of backward stochastic differential equations (BSDEs in
short) stemmed from stochastic control problem (\cite{Bismut1973}) in
which a non-linear Ricatti   BSDE was introduced.  Then, after the
pioneering work of Pardoux and Peng (\cite{Pardoux1990}) about the
general BSDE which was driven by a Brownian motion, BSDEs have been
extensively studied in the last twenty years because of their connections
with mathematical finance, stochastic control, and partial
differential equations (PDEs in short), please refer to
\cite{Karoui1997, Duffie1992, Duffie1992a}. Since then, many
researchers devoted their work to more general  BSDE, such as BSDE
driven by a L\'{e}vy process (\cite{Bahlali2003, Nualart2001}) and
BSDE with respect to both a Brownian motion and a Poisson random
measure (\cite{Barles1997, Tang1994, Idris2010}). Recently, Cohen and Elliott
\cite{COHEN2008, Cohen2010} studied BSDE driven by the martingale
part of a Markov chain and its application in finance.

As studied in Zhang and Yin (\cite{Zhang2004}), we consider  the stocks investment models
by virtue of hybrid geometric Brownian motion  in which both the
expected return and volatility depend on a finite state Markov
chain. To capture the  market trends as well as the various economic
factors, a finite state Markov chain $\alpha_t$, $t\ge 0$, is introduced to represent the
general market directions. If our object is to allocate assets into a number
of stocks so as to maximize an expected utility within a finite time
horizon, it leads to an optimal control problem.
 By virtue of
maximum principle method, we need to introduce an  adjoint equation
to deal with this optimization problem. The adjoint equation and the
state equation form  forward backward stochastic differential
equations (FBSDEs in short) system. Here the adjoint equation will
be a kind of BSDE with a Markov chain.

Motivated by such adjoint equation, in our paper,
 we consider the following BSDE with a Markov chain:
\[       Y_t=\xi+\int_t^Tf(s,Y_s,
                Z_s,\alpha_s)ds-\int_t^TZ_sdB_s,
\]
where $\alpha=\{\alpha_t;0\leq t\leq T\}$ is a continuous-time
Markov chain independent of the Brownian motion $B$. It is noted that this BSDE is different from the one studied in Cohen and Elliott (\cite{COHEN2008}), and it can be
considered as that its generator  is disturbed by random
environment and takes a set of discrete values which can be
described by a Markov chain.

When study the solvability for BSDE with a Markov chain, the classic
method with the contraction mapping cannot be directly used for the
lack of suitable filtration and corresponding It\^{o}'s
representation theorem. In this paper, inspired by the method
dealing with the BSDE with doubly Brownian motion
(\cite{Pardoux1994}), we construct a new filtration and give the
corresponding  extended It\^{o}'s representation theorem.

When various factors are considered, the underlying Markov chain
inevitably has a large state space, and the corresponding BSDE
becomes increasingly complicated. It is rationale that the change
rates of states display a two-time-scale behavior, a fast-time scale
and a slow varying one. Under this case, a small parameter $
\varepsilon
>0$ can be introduced and the singularly perturbed Markov chain is
involved.

In this paper, we consider the case that the states of the
underlying Markov chain are divided into a number of weakly
irreducible
 classes such that the Markov chain fluctuates rapidly among different states, and jumps less frequently among those classes.
 To reduce the complexity,  a small parameter ($\varepsilon >0$) is introduced to reflect the different rate of changes among
 different states. As shown in Zhang and Yin (\cite{Yin1997}),  it leads to a singularly perturbed Markovian models with two-time
 scale, the actual time $t$ and the stretched time $\frac{t}{\varepsilon}$. By aggregating the states in each irreducible class
  into a single, a limit aggregated Markov chain with a considerably smaller space can be obtained and its asymptotic
  probability distribution is studied. Such asymptotic theory has many applications in optimal control problem and mathematical finance.
We refer interested readers to \cite{Zhang2004,
 Zhang1999}.

In this paper, we will focus on  the asymptotic property of BSDE with a singularly perturbed
Markov chain. Following the averaged approach that aggregating the
states according to their jump rates,  we will show that the distribution of
$(Y_t, \int_t^T Z_sd\bar{B}_s)$ can be seen as an  asymptotic
distribution to  $(Y^{\varepsilon}_t,$  $\int_t^T
Z^{\varepsilon}_sdB_s)$,  where $(Y^{\varepsilon},Z^{\varepsilon})$
and
                $(Y,Z)$ satisfy:
                 \begin{equation*}
                Y^{\varepsilon}_t=\xi+\int_t^Tf(s,Y^{\varepsilon}_s,
                \alpha^{\varepsilon}_s)ds-\int_t^TZ^{\varepsilon}_sdB_s
  \end{equation*}and
                 \begin{equation*}
                Y_t=\xi+\int_t^T\bar{f}(s,Y_s,
                \bar{\alpha}_s)ds-\int_t^TZ_sd\bar{B}_s.
\end{equation*}
Here $\bar{\alpha}$ and $\bar{f}(s,Y_s,\bar{\alpha}_s)$ are respectively the limit
aggregated Markov chain and the averaged generator with respect to the quasi stationary distributions of the singularly perturbed Markov chain. Compared to the original BSDE with the singularly perturbed Markov chain, the limit BSDE depends on a Markov chain with a much smaller state space. Thus the complexity is reduced.

It is well known that BSDEs provide a probabilistic representation
for the solution of a large class of quasi-linear second order
partial differential equations (PDEs in short) (\cite{Barles1997, Pardoux1992, Pardoux1994, Pardoux1997, Ma1994}).
Thus BSDEs provide a probabilistic tool to study the
homogenization of PDEs, which
is the process of replacing rapidly varying coefficients by new
ones thus the solutions are close. In this paper, as an application of our theoretical result, after showing the relation
between our BSDE and one system of semi-linear PDE, we will show the
homogenization result of one system of semi-linear PDE with a singularly perturbed Markov
chain.

 This paper is organized as following. In section \ref{section:BSDE}, we
 study the solvability of BSDE with a Markov
chain.
Section \ref{section: BSDE with sungularly perturbed M C} is devoted
to the case that Markov chain has a large space. Under Jakubowski S-topology (\cite{Jakubowski1997}
) which is weaker than Skorohod's topology, we present the asymptotic property of BSDE with a singularly perturbed Markov chain.
  In section
\ref{section: homogenization of SPDEs},  we show the application of our theoretical results in the homogenization of one system of semi-linear PDE  with a singularly perturbed Markov chain. For  the terseness of the
main text of our paper, we put part of technical proofs for some
results in Appendix.

\section{BSDEs with Markov Chains} \label{section:BSDE}
Let $(\Omega, {\cal{F}},  P)$ be a probability space and $T>0$ be fixed. $\{{\cal{H}}_t,0 \leq t\leq
T\}$ is a filtration on the space satisfying the usual condition.
$B=\{B_t;0\leq t\leq T\}$ with $B_0=0$ is a $d$-dimensional
${\cal{H}}_t$-Brownian motion, and $\alpha=\{\alpha_t;0\leq t\leq
T\}$ is a continuous-time Markov chain independent of $B$ with the
state space ${\cal{M}}= \{1,2, \ldots,m \}$. Suppose the generator
of the Markov chain $Q=(q_{ij})_{m\times m}$ is given by
\begin{equation*}
\begin{split}
   P\{\alpha(t+\triangle)=j|\alpha(t)=i\}= \left\{\begin{array}{l l}
     q_{ij}\triangle+o(\triangle),\ \textrm{ \quad  \ \ if } i \neq j\\
     1+q_{ij}\triangle+o(\triangle)  ,\ \textrm{ if } i = j
         \end{array}
  \right.
\end{split}
\end{equation*}
where $\triangle > 0$. Here $q_{ij}\geq 0$ is the transition rate
from $i$ to $j$ if $i \neq j$, while $ q_{ii}=-\sum_{j=1,i\neq j}^m
q_{ij}$.

Throughout this paper, we introduce the following notations:
 $|\cdot|$ is the norm in the corresponding space; $A'$ is the transpose of matrix $A$;
 $L^p({\cal{H}}_{t};R^n)$ is the space of $R^n$-valued
${\cal{H}}_{t}$-adapted random variable $\xi$ satisfying
$E(|\xi|^p)<\infty$;
  $M_{\mathcal{H}_t}^2(0,T;$ $R^n)$ denotes the space of
$R^n$-valued $\mathcal{H}_t$-adapted  stochastic processes
$\varphi=\{\varphi_t;t \in [0,T]\}$ satisfying
$E\int_0^T|\varphi_t|^2dt < \infty$;
 $S_{\mathcal{H}_t}^2(0,T;R^n)$ is the space of
$R^n$-valued ${\cal{H}}_{t}$-adapted continuous
 stochastic processes $\varphi=\{\varphi_t;t \in [0,T]\}$ satisfying
$E(\sup_{0\leq t\leq T}|\varphi_t|^2)$ $ < \infty$.

\subsection{Motivation}

To study the stochastic optimal control  problem with a Markov
chain, we introduce an adjoint equation, then the state equation and
adjoint equation form a kind of FBSDEs with a Markov chain. Here the
adjoint equation will be a BSDE with  Markov chain. We give the
following linear quadratic (LQ in short) optimal control problem as an
example.

Consider the following stochastic LQ control problem with Markov
jumps
\begin{subequations} \label{LQ problem 1}
\begin{align}
      \textnormal{min.} \
                            &
                            J(v)=\frac{1}{2}E\left(\int_0^T\left((x^v_t)' R(t,\alpha_t)x^v_t+v'_t N(t,\alpha_t) v_t\right)dt+(x^v_T)'Q(\alpha_T)x^v_T\right)
                   \\
   \textnormal{s. t. }  & \left\{\begin{array}{l l}
     dx^v_t=\left(A(t,\alpha_t)x^v_t+B(t,\alpha_t)v_t\right)dt+\left(C(t,\alpha_t)x^v_t+D(t,\alpha_t)v_t\right)dB_t\\               x^v_0=a \in R^n
         \end{array}
  \right.
\end{align}
\end{subequations}
where $A(t,\alpha_t)=A_i(t), B(t,\alpha_t)=B_i(t),
C(t,\alpha_t)=C_i(t), D(t,\alpha_t)=D_i(t),$ $ R(t,\alpha_t)=R_i(t),
N(t,\alpha_t)=N_i(t)$ when $\alpha_t=i\ (i=1,\cdots,m)$, and they
are uniformly bounded $\mathcal{F}_t^B$-adapted processes
with appropriate dimensions. $Q(\alpha_T)=Q_i$ when $\alpha_T=i$
$(i=1,\cdots,m)$, and it is nonnegative symmetric matrices-valued
$\mathcal{F}_T^B$-measurable random variable. Besides, $R_i(t)$ is
nonnegative symmetric matrices-valued, $N_{i}(t)$ is positive
symmetric matrices-valued and the inverse $N_{i}(t)^{-1}$ is
bounded. The set of all $ \mathcal{H}_t$-adapted admissible controls is
$\mathcal{U}_{ad}\equiv M^2_{\mathcal{H}_t}(0,T;R^{n_u \times d})$,
and our aim is to find an admissible control $u$ such that
$\displaystyle J(u)=\inf_{v \in \mathcal{U}_{ad}}J(v)$.

There are many literatures on this kind of LQ optimal control problem with
Markov jumps \eqref{LQ problem 1} and its application, such as \cite{Zhou2003, Li2002, Zhang1999} and their references. Different to their methods that constructing the optimal control via the solution of Riccati equation,  we will use the FBSDEs approach.

\begin{thm} \label{thm: LQ problem and fbsde}
If the following FBSDE admits a unique solution $(x_t, y_t, z_t)$
\begin{equation} \label{fbsde of LQ problem}
\begin{split}
       \left\{\begin{array}{l l l}
                   ~~ dx_t=\left(A(t,\alpha_t)x_t+B(t,\alpha_t)\left( -N^{-1}(t,\alpha_t)\left( B'(t,\alpha_t)y_t+D'(t,\alpha_t)z_t\right)\right)\right)dt\\
                          ~~~~+\left(C(t,\alpha_t)x_t+D(t,\alpha_t)\left( -N^{-1}(t,\alpha_t)\left( B'(t,\alpha_t)y_t+D'(t,\alpha_t)z_t\right)\right)\right)dB_t,\\
                    -dy_t=\left(A'(t,\alpha_t)y_t+C'(t,\alpha_t)z_t+R(t,\alpha_t)x_t\right)dt-z_t dB_t,\\
                   ~~~ x_0=a, \ \ y_T=Q(\alpha_T)x_T.
         \end{array}
  \right.
\end{split}
\end{equation}
Then
$$u_t = -N^{-1}(t,\alpha_t)\left(B'(t,\alpha_t)y_t+D'(t,\alpha_t)z_t\right), 0\le t\le T$$
is the unique optimal control for the LQ problem \eqref{LQ problem 1}
\end{thm}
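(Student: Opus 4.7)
The plan is to show directly that for the candidate feedback $u_t = -N^{-1}(t,\alpha_t)(B'(t,\alpha_t) y_t + D'(t,\alpha_t) z_t)$ obtained from the FBSDE solution, the cost difference $J(v) - J(u)$ decomposes into a sum of three nonnegative quadratic terms for every admissible $v$. The pivotal step will be a duality identity obtained by applying It\^o's formula to $(x^v_t - x_t)' y_t$, where $x^v$ is the state controlled by $v$ and $(x, y, z)$ solves \eqref{fbsde of LQ problem}.

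Writing $\delta x = x^v - x$ and $\delta v = v - u$, the perturbation satisfies the linear SDE $d(\delta x) = (A \delta x + B \delta v) dt + (C \delta x + D \delta v) dB$ with $(\delta x)_0 = 0$. Applying It\^o's formula to $(\delta x)' y$ and combining the drift of the backward equation, the drift of $\delta x$, and the quadratic covariation, the $A$ and $C$ contributions cancel pairwise by transposition, so the drift collapses to $-(\delta x)' R x + (B' y + D' z)' \delta v$. Using the definition of $u$ gives $B' y + D' z = -N u$, and taking expectations together with the terminal condition $y_T = Q(\alpha_T) x_T$ yields the duality relation
\[
E[(\delta x_T)' Q(\alpha_T) x_T] = -E\int_0^T \left[(\delta x)' R x + u' N \delta v\right] dt.
\]

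Next I would expand $J(v) - J(u)$ via the identity $a'Ma - b'Mb = (a-b)'M(a-b) + 2(a-b)'Mb$ (valid for symmetric $M$), applied to each quadratic term with $M = R, N, Q$. This separates the difference into a diagonal ``squared-difference'' piece $\frac{1}{2} E\int_0^T [(\delta x)' R \delta x + (\delta v)' N \delta v] dt + \frac{1}{2} E[(\delta x_T)' Q \delta x_T]$ and cross terms $E\int_0^T [(\delta x)' R x + (\delta v)' N u] dt + E[(\delta x_T)' Q x_T]$. Substituting the duality relation above and using symmetry of $N$ to rewrite $(\delta v)' N u = u' N \delta v$ makes the cross terms vanish identically. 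Hence $J(v) - J(u)$ equals the diagonal piece, which is nonnegative because $R, Q \succeq 0$ and $N \succ 0$; in particular, equality $J(v) = J(u)$ forces $\delta v = 0$ in $\mathcal{U}_{ad}$, giving optimality and uniqueness simultaneously.

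The main technical point I expect to need care with is justifying that the stochastic integrals arising in the It\^o computation have zero expectation. Granted the hypothesis that $(x, y, z)$ is a genuine solution of the FBSDE (so $y \in S^2_{\mathcal{H}_t}$, $z \in M^2_{\mathcal{H}_t}$) and that the coefficients are uniformly bounded with $v \in M^2_{\mathcal{H}_t}$, standard linear-SDE estimates place $x^v, x \in S^2_{\mathcal{H}_t}$; a Cauchy-Schwarz bound then makes every integrand in the local-martingale terms belong to $L^1$, and a routine localization together with dominated convergence completes the verification.
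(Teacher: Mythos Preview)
Your optimality argument is essentially identical to the paper's: both apply It\^o's formula to $(x^v_t - x_t)'y_t$, use the backward equation to cancel the $A$ and $C$ terms, and substitute $B'y + D'z = -Nu$ to make the cross terms in the expansion of $J(v) - J(u)$ vanish. The paper writes this as an inequality $J(v) - J(u) \geq E\int_0^T(y'B + z'D + u'N)(v-u)\,dt = 0$, discarding the nonnegative diagonal terms, whereas you keep the exact identity
\[
J(v) - J(u) = \tfrac{1}{2}E\!\int_0^T\!\big[(\delta x)'R\,\delta x + (\delta v)'N\,\delta v\big]dt + \tfrac{1}{2}E\big[(\delta x_T)'Q\,\delta x_T\big].
\]
The payoff of your sharper bookkeeping is that uniqueness drops out immediately: if $v$ is also optimal, the identity forces $E\int_0^T(\delta v)'N\,\delta v\,dt = 0$, hence $\delta v = 0$ since $N$ is uniformly positive. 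The paper instead runs a separate parallelogram-rule argument (applying $J$ to $\tfrac{1}{2}(u^1+u^2)$ for two optimal controls $u^1,u^2$) to reach the same conclusion. Both routes are standard; yours is a bit more economical here because the completion-of-squares structure already encodes strict convexity in the control variable.
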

\begin{proof} Firstly, we will prove that $\{u=u_t; 0\le t\le T\}$ is an optimal
control  for the LQ problem \eqref{LQ problem 1}.

 From the forward equation of \eqref{fbsde of LQ problem}, we can see that $x$ is the corresponding system state trajectory of $u$. For an arbitrary admissible control $v$, denote $x^v$ as the corresponding system state trajectory, then
  \begin{align*}
              & J(v)-J(u)\\
            = & \ \frac{1}{2} E \Big( \int_0^T \big((x_t^v-x_t)'R(t,\alpha_t)(x_t^v-x_t)+(v_t-u_t)'N(t,\alpha_t)(v_t-u_t) \\
              & \ \ \  \ \ \ \ \ \ \ \ \ \ \ +2x_t'R(t,\alpha_t)(x_t^v-x_t) +2u_t'N(t,\alpha_t)(v_t-u_t)   \big)dt
              \\
              & \ \ \  \ \ \ \ +(x_T^v-x_T)'Q(\alpha_T)(x_T^v-x_T)+ 2x'_TQ(\alpha_T)(x_T^v-x_T) \Big).
  \end{align*}
Applying  It\^{o}'s formula to $y'_t(x_t^v-x_t)$, we have
\begin{align*}
             & E x'_TQ(\alpha_T)(x_T^v-x_T)\\
             =&E \int_0^T\big( \left ( y'_t B(t,\alpha_t) + z'_t D(t,\alpha_t)\right)(v_t+N^{-1}(t,\alpha_t)\left(B'(t,\alpha_t)y_t+D'(t,\alpha_t)z_t\right))\\
            &~~~~~~~~~~ -x_t'R(t,\alpha_t)(x_t^v-x_t) \big)dt\\
             = & E \int_0^T\left( \left ( y'_t B(t,\alpha_t) + z'_t D(t,\alpha_t)\right)(v_t-u_t)-x_t'R(t,\alpha_t)(x_t^v-x_t) \right)dt.
\end{align*}
As $R$, $Q$ are nonnegative and $N$ is positive, we have
\begin{align*}
              J(v)-J(u) \geq & E \int_0^T \left(y'_t B(t,\alpha_t) + z'_t D(t,\alpha_t)+u'_t N(t,\alpha_t)\right)(v_t-u_t) dt \\
                      = & 0.
\end{align*}
So $u(t)=-N^{-1}(t,\alpha_t)\left(B'(t,\alpha_t)y_t+D'(t,\alpha_t)z_t\right)$
is an optimal control.

{\bf{Uniqueness:}} Assume that $u^1$ and $u^2$ are both optimal
controls with $J(u^1)=J(u^2)=\gamma \geq 0$, and the corresponding
trajectories are $x^1$ and $x^2$. Due to the linear property of the
system, the trajectories corresponding to $\displaystyle
\frac{u^1+u^2}{2}$ is $\displaystyle \frac{x^1+x^2}{2}$.

From the classical parallelogram rule,  $R$, $Q$ are
nonnegative, and $N$ is positive, there exists $\delta >0$ such that
\begin{align*}
              &2 \gamma\\
              = & \ J(u^1)+J(u^2)\\
              = & \ 2J\left(\frac{u^1+u^2}{2}\right) + E \bigg(
                \left(\frac{x_T^1-x_T^2}{2}\right)'Q(\alpha_T)\left(\frac{x_T^1-x_T^2}{2}\right) + \int_0^T \bigg( \left(\frac{x_t^1-x_t^2}{2}\right)' \\
               &\ \ \ \ \  \ \ \ \ \ \  R(t,\alpha_t)\left(\frac{x_t^1-x_t^2}{2}\right) +
              \left(\frac{u_t^1-u_t^2}{2}\right)'N(t,\alpha_t)\left(\frac{u_t^1-u_t^2}{2}\right)\bigg)dt \bigg)\\
              \geq & \ 2J\left(\frac{u^1+u^2}{2}\right) + E\int_0^T  \left(\frac{u_t^1-u_t^2}{2}\right)'N(t,\alpha_t)\left(\frac{u_t^1-u_t^2}{2}\right) dt \\
               \geq & \ 2\gamma +\frac{\delta}{4}E\int_0^T |u_t^1-u_t^2|^2dt
\end{align*}
Thus $E\int_0^T |u_t^1-u_t^2|^2dt \leq 0$ which yields that $u^1=u^2$.
\end{proof}

\subsection{BSDEs with Markov chains}

The derivation of the optimal control for the above LQ problem \eqref{LQ problem 1} can
be regarded as one motivation for us to study BSDEs with Markov
jumps.  In this subsection, we will study the solvability to the
 following BSDE with a Markov chain firstly:
\begin{equation} \label{eq:BSDE}
                Y_t=\xi+\int_t^Tf(s,Y_s,
                Z_s,\alpha_s)ds-\int_t^TZ_sdB_s.
\end{equation}

Let ${\cal{N}}$ denote the class of all $P$-null sets of
${\cal{F}}$. For each $t \in [0,T]$, we define
${\cal{F}}_t={\cal{F}}_t^B \vee{\cal{F}}_{t,T}^{\alpha}
\vee{\cal{N}}$ where for any process $\{\eta_t; 0 \leq t \leq T\}$,
${\cal{F}}_{t,T}^{\eta}=\sigma \{\eta_r;t \leq r\leq T\}$ and
${\cal{F}}_{t}^{\eta}={\cal{F}}_{0,t}^{\eta}$.
For convenience, we denote $M^2(0,T;R^n)=M_{\mathcal{F}_t}^2(0,T;R^n)$ and $S^2(0,T;$ $R^n)=$ $S_{\mathcal{F}_t}^2(0,$ $T;R^n)$.

Thereinafter,    we make the following assumption:

\begin{assp} \label{assp:f of BSDE}
               (i)  $\xi \in L^2({\cal{F}}_{T};R^k)$;
                (ii) $f:\Omega\times     [0,T]\times R^k \times R^{k\times d} \times\mathcal{M}\rightarrow  R^k$ satisfies that $
                \forall  (y,z)\in R^k \times R^{k\times d}$,
                   $ \forall i \in \mathcal{M}$, $f(\cdot, y, z, i)  \in M^2_{\mathcal{F}_t^B}(0,T;R^k)$, and
                             $\exists\mu>0$, such that  $ \forall i \in \mathcal{M}$, $ \forall  (\omega, t) \in \Omega\times
                 [0,T]$, $(y_1,z_1)$, $(y_2,z_2) \in R^k \times R^{k\times d}$,
                  $$|f(t,y_1,z_1,i)-f(t,y_2,z_2,i)| \leq                    \mu(|y_1-y_2|+|z_1-z_2|).                $$
\end{assp}
 Our main result in this section is in the following theorem.
\begin{thm} \label{Thm: existence of the solution of BSDE}
                Under Assumption $\ref{assp:f of BSDE}$, there exists a unique solution
                pair $(Y,Z)\in S^2(0,T;R^k)\times M^2(0,T;R^{k\times d})$ for BSDE $(\ref{eq:BSDE})$.
\end{thm}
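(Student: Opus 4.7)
The plan is to mimic the classical Picard–Pardoux–Peng argument, but adapted to the fact that $\{\mathcal{F}_t\}$ is not a genuine filtration (it combines a forward Brownian filtration with a backward-in-time Markov-chain $\sigma$-algebra, as in the BDSDE framework of \cite{Pardoux1994}). The central technical ingredient I would establish first is an \emph{extended martingale representation theorem}: for any $\Xi\in L^2(\mathcal{F}_T;R^k)$ there exist an $\mathcal{F}_0$-measurable $\zeta$ and a unique $Z\in M^2(0,T;R^{k\times d})$ such that
\[
\Xi=\zeta+\int_0^T Z_s\,dB_s .
\]
The route to this representation uses the independence of $B$ and $\alpha$: enlarge to the genuine filtration $\mathcal{G}_t=\mathcal{F}_t^{B}\vee\sigma(\alpha)$, under which $B$ is still a Brownian motion, apply the classical Brownian representation on $\mathcal{G}_t$, and then check that the integrand $Z_s$ is in fact $\mathcal{F}_s$-measurable because $\xi$ (and more generally anything in $\mathcal{F}_T$) depends on $\alpha$ only through $\alpha_T$, which is $\mathcal{F}_{s,T}^{\alpha}$-measurable for every $s\le T$.

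Given that tool, I would set up the Picard iteration in the standard way. On the Banach space $\mathcal{B}=M^2(0,T;R^k)\times M^2(0,T;R^{k\times d})$ endowed with the weighted norm
\[
\|(y,z)\|_\beta^2=E\int_0^T e^{\beta s}\bigl(|y_s|^2+|z_s|^2\bigr)\,ds ,
\]
define $\Phi(y,z)=(Y,Z)$ as follows: apply the extended representation to $\Xi:=\xi+\int_0^T f(s,y_s,z_s,\alpha_s)\,ds\in L^2(\mathcal{F}_T;R^k)$ to obtain $Z\in M^2$, and then set
\[
Y_t=\xi+\int_t^T f(s,y_s,z_s,\alpha_s)\,ds-\int_t^T Z_s\,dB_s .
\]
One verifies directly that $Y_t=E[\Xi\mid \mathcal{F}_t]-\int_0^t f(s,y_s,z_s,\alpha_s)\,ds$ belongs to $M^2$, so $\Phi$ is well defined.

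To prove the contraction, take two inputs $(y^i,z^i)$ with outputs $(Y^i,Z^i)$ and apply It\^o's formula to $e^{\beta t}|Y_t^1-Y_t^2|^2$. The terminal contribution vanishes, the stochastic-integral term is a true martingale (by the $M^2$ bound on $Z^i$), and the Lipschitz assumption on $f$ in $(y,z)$, uniformly in $\alpha$, together with the elementary inequality $2ab\le \varepsilon a^2+\varepsilon^{-1}b^2$, yields an estimate of the form
\[
E\!\int_0^T\! e^{\beta s}\bigl(|Y_s^1-Y_s^2|^2+|Z_s^1-Z_s^2|^2\bigr)ds
\le \frac{C(\mu)}{\beta}\,E\!\int_0^T\! e^{\beta s}\bigl(|y_s^1-y_s^2|^2+|z_s^1-z_s^2|^2\bigr)ds .
\]
Choosing $\beta$ large makes $\Phi$ a strict contraction on $(\mathcal{B},\|\cdot\|_\beta)$, and Banach's fixed-point theorem delivers a unique $(Y,Z)\in M^2\times M^2$ solving \eqref{eq:BSDE}. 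Upgrading $Y$ to $S^2(0,T;R^k)$ is then routine: rewrite $Y_t=E[\xi+\int_t^T f(s,Y_s,Z_s,\alpha_s)ds\mid\mathcal{F}_t]$, take $\sup_t$ inside, and apply Doob's $L^2$ inequality and the Burkholder--Davis--Gundy inequality to control $\sup_t|\int_0^t Z_s dB_s|$.

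The main obstacle is the first step: the usual contraction proof relies crucially on an It\^o martingale representation, but here $\mathcal{F}_t$ is not monotone in $t$, so one cannot quote it directly. Once the extended representation is in hand via independence of $B$ and $\alpha$ and conditioning on $\sigma(\alpha)$, everything else is a weighted-norm adaptation of the Pardoux--Peng argument and presents no surprises.
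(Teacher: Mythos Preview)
Your overall architecture---extended representation on the enlarged filtration $\mathcal{G}_t=\mathcal{F}_t^B\vee\mathcal{F}_T^{\alpha}$, then a weighted-norm contraction---is exactly the paper's route. But there is a genuine gap in the measurability step, and it is precisely the point where the work lies.

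You assert that $\Xi=\xi+\int_0^T f(s,y_s,z_s,\alpha_s)\,ds\in L^2(\mathcal{F}_T;R^k)$ and then argue that $Z_s$ is $\mathcal{F}_s$-measurable because ``anything in $\mathcal{F}_T$ depends on $\alpha$ only through $\alpha_T$.'' The first claim is false: $\mathcal{F}_T=\mathcal{F}_T^B\vee\sigma(\alpha_T)$, whereas the integrand $f(s,y_s,z_s,\alpha_s)$ involves $\alpha_s$ explicitly and $(y_s,z_s)$ are $\mathcal{F}_s=\mathcal{F}_s^B\vee\mathcal{F}_{s,T}^{\alpha}$-measurable, so $\Xi$ depends on the entire path of $\alpha$ and is only $\mathcal{G}_T$-measurable. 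Consequently your one-line justification for the $\mathcal{F}_s$-measurability of $Z_s$ does not apply; the representation on $\mathcal{G}_t$ yields a priori only $Z\in M^2_{\mathcal{G}_t}$, and the contraction then produces a fixed point adapted to $\mathcal{G}_t$, not to $\mathcal{F}_t$ as required by the statement.

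The paper treats this as the main technical issue. Its Proposition~2.1 gives the representation for $N\in L^2(\mathcal{G}_T)$ with $Z\in M^2_{\mathcal{G}_t}$; then Proposition~2.2 (the case $f=f(s,\alpha_s)$) contains a separate, non-trivial argument---approximating $\vartheta=\xi+\int_t^T f(s,\alpha_s)\,ds$ by cylinder functions in finitely many $\alpha_{\bar t_0},\dots,\alpha_{\bar t_n}$ with $\bar t_i\in[t,T]$, applying the classical It\^o representation statewise, and passing to the limit---to show that $Y_t$ and $Z_s$ are in fact $\mathcal{F}_t^B\vee\mathcal{F}_{t,T}^{\alpha}$-measurable. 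Only after this is established does the contraction on $M^2(0,T;R^k\times R^{k\times d})$ (with the $\mathcal{F}_t$-adaptedness built in) go through. Your proposal needs this missing ingredient; without it the fixed point lives in the wrong space.
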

The proof of Theorem \ref{Thm: existence of the solution of BSDE} consists of three steps.

 \noindent {\bf{Step 1: Extension of It\^{o}'s  representation theorem}}

 It is noted that
$\{{\cal{F}}_t; 0 \leq t \leq T\}$ is neither increasing nor
decreasing, and it does not constitute a filtration. Inspired by the method handling the BSDE with doubly Brown motions (\cite{Pardoux1994}),  we define a filtration $({\mathcal{G}}_t)_{0\leq t\leq T}$ by
$${\cal{G}}_t\triangleq{\cal{F}}_t^B\vee {\cal{F}}_T^{\alpha}\vee \cal{N}$$
For the filtration $({\mathcal{G}}_t)_{0\leq t\leq T}$, we give the
following  extension of It\^{o}'s  representation theorem. This
result and its corollary play key roles during the proof of Theorem
\ref{Thm: existence of the solution of BSDE}.

\begin{prop} \label{Prop: extension of ito martingale}
               For $N\in L^2(\mathcal{G}_T;R^k)$,
              there exist a unique random variable $N_0\in L^2(\mathcal{F}_T^{\alpha};R^k)$ and
              a unique stochastic process $Z=\{Z_t; 0\leq t\leq T\} \in M^2_{\mathcal{G}_t}(0,T; R^{k \times d})$  such that
              \begin{equation}\label{eq: extension of Ito representation}
                              N=N_0+\int_0^TZ_tdB_t, \quad 0\leq t\leq T.
              \end{equation}
              Actually, $N_0=E(N|\mathcal{F}_T^{\alpha})$.
\end{prop}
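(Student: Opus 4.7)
The plan is to reduce the claim to the classical Itô martingale representation theorem by exploiting the independence of $B$ and $\alpha$. The crucial preliminary observation is that, although $\mathcal{F}_T^\alpha$ has been stuffed into $\mathcal{G}_0$, the process $B$ remains a $\mathcal{G}_t$-Brownian motion: for $s<t$, independence gives $E[B_t-B_s\mid \mathcal{F}_s^B\vee \mathcal{F}_T^\alpha]=E[B_t-B_s\mid \mathcal{F}_s^B]=0$, and the quadratic variation is unchanged. Hence stochastic integrals against $B$ driven by $\mathcal{G}_t$-adapted integrands are well-defined $\mathcal{G}_t$-martingales, with the usual Itô isometry.

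First I would establish the representation on a total subset of $L^2(\mathcal{G}_T;\mathbb{R}^k)$, namely random variables of product form $N=\xi\eta$ with $\xi\in L^2(\mathcal{F}_T^B;\mathbb{R}^k)$ and $\eta\in L^\infty(\mathcal{F}_T^\alpha;\mathbb{R})$. Applying the classical Itô representation theorem to $\xi$ on the Brownian filtration yields $\zeta\in M^2_{\mathcal{F}_t^B}(0,T;\mathbb{R}^{k\times d})$ with $\xi = E\xi + \int_0^T \zeta_s\,dB_s$. Multiplying by $\eta$, which is $\mathcal{G}_0$-measurable, gives
\begin{equation*}
\xi\eta = \eta\, E\xi + \int_0^T \eta\,\zeta_s\,dB_s,
\end{equation*}
and $\eta\zeta$ sits in $M^2_{\mathcal{G}_t}(0,T;\mathbb{R}^{k\times d})$. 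Using independence of $\xi$ and $\eta$, the leading term is exactly $\eta\, E\xi = E[\xi\eta\mid \mathcal{F}_T^\alpha]$, so the representation with $N_0=E(N\mid\mathcal{F}_T^\alpha)$ holds on this subset.

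Next I would extend to the whole space by linearity and $L^2$-density. Finite sums of products $\xi\eta$ are dense in $L^2(\mathcal{G}_T;\mathbb{R}^k)$ because $\mathcal{G}_T=\mathcal{F}_T^B\vee\mathcal{F}_T^\alpha\vee\mathcal{N}$ is generated by the product $\sigma$-algebra. Given an arbitrary $N\in L^2(\mathcal{G}_T;\mathbb{R}^k)$, approximate it by such simple sums $N^{(n)}$; the corresponding pairs $(N_0^{(n)},Z^{(n)})$ are Cauchy in $L^2(\mathcal{F}_T^\alpha)\times M^2_{\mathcal{G}_t}(0,T)$ thanks to Itô's isometry applied under $\mathcal{G}_t$ and the contraction property of conditional expectation. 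Passing to the limit yields the desired $(N_0,Z)$ and preserves the identity $N_0=E(N\mid\mathcal{F}_T^\alpha)$.

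For uniqueness I would take conditional expectation of both sides of \eqref{eq: extension of Ito representation} with respect to $\mathcal{F}_T^\alpha$: since $B$ conditional on $\mathcal{F}_T^\alpha$ is still a Brownian motion and $Z$ is $\mathcal{G}_t$-adapted, the stochastic integral has zero conditional mean, forcing $N_0=E(N\mid\mathcal{F}_T^\alpha)$ almost surely. Uniqueness of $Z$ then follows from the Itô isometry applied to $\int_0^T(Z-Z')\,dB=0$. The main obstacle I anticipate is the justification of the martingale / isometry machinery with respect to the enlarged filtration $\mathcal{G}_t$; this is bought entirely by the independence of $\alpha$ and $B$, so the care needed is in checking that enlargement by $\mathcal{F}_T^\alpha$ really leaves $B$ a Brownian motion and keeps the $L^2$-theory of stochastic integration intact.
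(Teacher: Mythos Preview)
Your proof is correct, but it follows a genuinely different route from the paper's.

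The paper proceeds by an explicit discretization of $\alpha$: it takes dense subsets $\{t_i\}$ and $\{t'_j\}$ of $[0,T]$, defines $\mathcal{G}_{n,m}=\sigma(\alpha_{t_0},\ldots,\alpha_{t_n},B_{t'_0},\ldots,B_{t'_m})$, and uses Doob's martingale convergence theorem (Lemma~\ref{Doob's martingale convergence theorem}) to show $E(N\mid\mathcal{G}_{n,m})\to N$. Because $\alpha$ has a finite state space, $E(N\mid\mathcal{F}_T^B\vee\sigma(\alpha_{t_0},\ldots,\alpha_{t_n}))$ decomposes as a finite sum $\sum I_{\{(\alpha_{t_0},\ldots,\alpha_{t_n})=(i_0,\ldots,i_n)\}}N_n(i_0,\ldots,i_n)$ with each $N_n(i_0,\ldots,i_n)$ being $\mathcal{F}_T^B$-measurable; classical It\^o representation is then applied termwise, and the resulting sequences are shown to be Cauchy as $n\to\infty$.

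Your argument is more functional-analytic and more general: you observe directly that $B$ remains a $\mathcal{G}_t$-Brownian motion under the independent enlargement, establish the representation on the total set of products $\xi\eta$ with $\xi\in L^2(\mathcal{F}_T^B)$ and $\eta\in L^\infty(\mathcal{F}_T^\alpha)$, and extend by density. This approach does not use the finite state space of $\alpha$ at all---it would work verbatim with $\mathcal{F}_T^\alpha$ replaced by any $\sigma$-algebra independent of $B$. The paper's approach, by contrast, leans on the finiteness of $\mathcal{M}$ to reduce to a finite sum of $\mathcal{F}_T^B$-measurable pieces, which makes the approximation more explicit but less portable. Both uniqueness arguments are essentially the same: take conditional expectation with respect to $\mathcal{F}_T^\alpha$ and use the It\^o isometry.
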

During the derivation of Proposition \ref{Prop: extension of ito martingale}, we need the following two lemmas.
 \begin{lem} (\cite{Yan2003})\label{Lemma:biaoshi lemma}
       If $X$, $Y$: $\Omega \rightarrow R^d$ are two given
       functions,  $Y$ is $\sigma(X)$-measurable if and only if
       there exists a Borel measurable function $g$: $R^d \rightarrow
       R^d$ such that $Y=g(X)$.
 \end{lem}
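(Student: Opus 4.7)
The plan is to prove the two directions of the Doob--Dynkin factorization lemma separately, with the nontrivial content being the ``only if'' direction proved by the standard measure--theoretic approximation argument.

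First I would dispose of the easy direction. If $Y = g(X)$ for some Borel $g \colon R^d \to R^d$, then for every Borel set $C \subset R^d$, $Y^{-1}(C) = X^{-1}(g^{-1}(C))$ belongs to $\sigma(X)$ because $g^{-1}(C)$ is Borel. Hence $Y$ is $\sigma(X)$-measurable.

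For the ``only if'' direction I would reduce to the scalar case by treating each of the $d$ components of $Y$ separately; once each component has a Borel factorization, stacking them yields a Borel $g$. Then I would run the standard machine in four stages. \emph{Stage 1 (indicators):} if $Y = \mathbf{1}_A$ with $A \in \sigma(X)$, then by definition of $\sigma(X)$ there is a Borel set $B \subset R^d$ with $A = X^{-1}(B)$; take $g = \mathbf{1}_B$. \emph{Stage 2 (simple functions):} a nonnegative simple $\sigma(X)$-measurable $Y = \sum_{i=1}^N a_i \mathbf{1}_{A_i}$ is handled by choosing Borel sets $B_i$ with $A_i = X^{-1}(B_i)$ and setting $g = \sum_{i=1}^N a_i \mathbf{1}_{B_i}$. \emph{Stage 3 (nonnegative measurable functions):} approximate a general nonnegative $\sigma(X)$-measurable $Y$ by an increasing sequence of simple functions $Y_n \uparrow Y$, and use Stage 2 to write $Y_n = g_n(X)$ with $g_n$ Borel. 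Define
\[
  g(x) \;\triangleq\; \limsup_{n \to \infty} g_n(x), \qquad x \in R^d,
\]
which is Borel measurable on all of $R^d$. For every $\omega \in \Omega$, $g(X(\omega)) = \limsup_n g_n(X(\omega)) = \limsup_n Y_n(\omega) = Y(\omega)$. \emph{Stage 4 (signed scalars):} split $Y = Y^+ - Y^-$, apply Stage 3 to each part to obtain Borel $g^{\pm}$, and set $g = g^+ - g^-$ on the Borel set where this is finite, and $g = 0$ elsewhere.

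The only delicate point is that $g$ must be Borel on the whole of $R^d$, not merely on the (possibly non-Borel) range $X(\Omega)$; this is exactly why one uses $\limsup$ rather than $\lim$, and why the redefinition on the exceptional set in Stage 4 is needed. Everything else is routine bookkeeping, and the result is classical, so I would present this as a short proof citing the fact that the factorization lemma is a direct consequence of the definition of $\sigma(X)$ combined with the standard monotone convergence approximation.
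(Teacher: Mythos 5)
Your proof is correct: it is the standard Doob--Dynkin factorization argument (easy direction via preimages, hard direction via indicators, simple functions, monotone approximation with a $\limsup$ to keep $g$ Borel on all of $R^d$, and the positive/negative split), and your attention to the exceptional set where $\limsup g_n$ is infinite or $g^+-g^-$ is undefined is exactly the right care. Note that the paper itself offers no proof of this lemma --- it is quoted verbatim from the cited textbook of Yan and Liu --- so there is nothing to compare against; your write-up supplies the canonical argument that the reference contains.
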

 \begin{lem}(Doob's martingale convergence theorem)\label{Doob's martingale convergence theorem}
            Let $\{\mathcal{F}_t; t\geq 0\}$ be a filtration on the space $(\Omega, \mathcal{F}, P)$, $X \in L^1(\mathcal{F};R^d)$, then
            $$
              E(X|\mathcal{F}_t)\rightarrow
              E(X|\mathcal{F}_{\infty}), \quad \mbox{as } t \rightarrow
              \infty, \textrm{      a.s. and in $L^1$ as well.}
            $$

\end{lem}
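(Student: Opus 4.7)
The plan is to treat this as the standard Doob martingale convergence theorem and give a proof in the classical four-step form: build the candidate martingale, prove almost sure convergence via upcrossings, upgrade to $L^1$ by uniform integrability, and finally identify the limit using the tower property.

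First, I would define $M_t := E(X\mid\mathcal{F}_t)$ and note that $(M_t)_{t\geq 0}$ is an $(\mathcal{F}_t)$-martingale by the tower property. A key preparatory observation is that the family $\{M_t : t\geq 0\}$ is uniformly integrable: since $|M_t|\leq E(|X|\mid\mathcal{F}_t)$ by Jensen, and the family of conditional expectations of the fixed $L^1$ variable $|X|$ is uniformly integrable (the standard de la Vall\'ee Poussin / absolute continuity of the integral argument gives $\sup_t\int_{\{|M_t|>c\}}|M_t|\,dP \to 0$ as $c\to\infty$).

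Second, for the almost sure convergence, I would invoke Doob's upcrossing inequality. Working along a countable dense subset $D\subset[0,\infty)$ (which suffices because we shall ultimately only use the limit along rationals and modify on a null set), for any $a<b$ and $T>0$ the expected number of upcrossings of $[a,b]$ by $(M_t)_{t\in D\cap[0,T]}$ is bounded by $(E|M_T|+|a|)/(b-a)\leq (E|X|+|a|)/(b-a)$. Letting $T\to\infty$ and then taking a union over rational pairs $a<b$, the set on which $\liminf_{t\to\infty}M_t<\limsup_{t\to\infty}M_t$ has probability zero, so the limit $M_\infty:=\lim_{t\to\infty}M_t$ exists almost surely in $[-\infty,\infty]^d$ (componentwise). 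Fatou's lemma together with the $L^1$ bound $E|M_t|\leq E|X|$ shows $M_\infty$ is finite a.s.\ and lies in $L^1$.

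Third, combining almost sure convergence with the uniform integrability established in step one, Vitali's convergence theorem yields $M_t\to M_\infty$ in $L^1$. Finally, to identify $M_\infty$ with $E(X\mid\mathcal{F}_\infty)$, I would fix $s\geq 0$ and $A\in\mathcal{F}_s$, and use the $L^1$ convergence to write
\[
\int_A M_\infty\,dP \;=\;\lim_{t\to\infty}\int_A M_t\,dP \;=\;\int_A X\,dP,
\]
where the second equality uses $A\in\mathcal{F}_s\subset\mathcal{F}_t$ for $t\geq s$ and the defining property of $M_t=E(X\mid\mathcal{F}_t)$. Since $\bigcup_{s\geq 0}\mathcal{F}_s$ is a $\pi$-system generating $\mathcal{F}_\infty$, a monotone class / Dynkin argument extends the identity to all $A\in\mathcal{F}_\infty$, and uniqueness of conditional expectation yields $M_\infty=E(X\mid\mathcal{F}_\infty)$ a.s.

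The main technical obstacle is the upcrossing step, which is the genuine content of the theorem; the uniform integrability and limit-identification steps are essentially bookkeeping. Because the statement is classical, I would expect the authors either to cite a standard reference or to present only a very condensed version of the upcrossing argument rather than redo it in full.
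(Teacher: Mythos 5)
Your proof is correct, but there is nothing in the paper to compare it against: Lemma~2.2 is quoted as a classical result (Doob/L\'evy upward convergence) and the authors give no proof at all, exactly as you anticipated in your closing remark. Your four-step argument --- uniform integrability of the family $E(X\mid\mathcal{F}_t)$, upcrossing inequality along a countable dense set for a.s.\ convergence, Vitali for the $L^1$ upgrade, and a $\pi$-system argument to identify the limit as $E(X\mid\mathcal{F}_\infty)$ --- is the standard proof and is sound. The only point worth tightening is the passage from convergence along the countable dense set $D$ to convergence as $t\to\infty$ through all reals: rather than ``modifying on a null set,'' the cleanest route is to observe that for any sequence $t_n\uparrow\infty$ the discrete-parameter uniformly integrable martingale $\bigl(E(X\mid\mathcal{F}_{t_n})\bigr)_n$ converges a.s.\ and in $L^1$ to $E\bigl(X\mid\sigma(\cup_n\mathcal{F}_{t_n})\bigr)=E(X\mid\mathcal{F}_\infty)$, and the limit is independent of the sequence; this also matches how the lemma is actually used in the paper, namely only along the discrete index sets $\mathcal{G}_{n,m}$ and $\bar{\mathcal{G}}_{n,m}$, so the discrete-parameter version already suffices there.
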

\begin{proof}
\noindent {{\bf Existence:}} Let $\{t_i\}_{i\geq 0}$,
$\{t'_j\}_{j\geq 0}$ be two dense subsets of $[0,T]$ where $t_0=t'_0=0$. For each integer $n, m \geq
 0$, let $\mathcal{G}_{n,m}$ be the $\sigma$-algebra generated by $\alpha_{t_0}, \alpha_{t_1}, \cdots, \alpha_{t_n},
 B_{t'_0},B_{t'_1},$ $\cdots,B_{t'_m}$, i.e., $\mathcal{G}_{n,m}=\sigma\{\alpha_{t_0}, \alpha_{t_1}, \cdots, \alpha_{t_n},
 B_{t'_0},B_{t'_1},$ $\cdots,B_{t'_m}\}$. Obviously,  $\mathcal{G}_{n,m} \subset
 \mathcal{G}_{n+1,m}$, $\mathcal{G}_{n,m} \subset
 \mathcal{G}_{n,m+1}$, $\mathcal{G}_{n,m} \subset
 \mathcal{G}_{n+1,m+1}$, $\displaystyle \sigma(\cup_{m=1}^{\infty}\mathcal{G}_{n,m})=\mathcal{F}_T^B\vee \sigma\{\alpha_{t_0}, \alpha_{t_1}, \cdots, \alpha_{t_n}\},$ and $\displaystyle \sigma(\cup_{n,m=0}^{\infty}$ $\mathcal{G}_{n,m})$ $=\mathcal{G}_T$.

 For $N \in L^2(\mathcal{G}_T;R^k)$, $\forall n, m$, by Lemma  \ref{Lemma:biaoshi lemma} and Lemma \ref{Doob's martingale convergence theorem}, there exists a Borel measurable function $N_{n,m}: \mathcal{M}^{n+1} \times R^{(m+1) \times d} \rightarrow
 R^k$ such that
 \begin{equation*}
 \begin{split}
             &E(N|\mathcal{G}_{n,m})=N_{n,m}(\alpha_{t_0}, \alpha_{t_1}, \cdots, \alpha_{t_n}, B_{t'_0},B_{t'_1},\cdots,B_{t'_m})\\
             &N_{n,m}(\alpha_{t_0}, \alpha_{t_1}, \cdots, \alpha_{t_n}, B_{t'_0},B_{t'_1},\cdots,B_{t'_m}) \rightarrow
              E\left(N|\mathcal{F}_T^B\vee \sigma\{\alpha_{t_0}, \alpha_{t_1}, \cdots, \alpha_{t_n}\}\right)
 \end{split}
 \end{equation*}
 Denote $ N_n(\alpha_{t_0}, \alpha_{t_1}, \cdots, \alpha_{t_n})
             \triangleq  E\left(N|\mathcal{F}_T^B\vee \sigma\{\alpha_{t_0}, \alpha_{t_1}, \cdots, \alpha_{t_n}\}\right)
            $, it can be rewritten as
 $$
        N_n(\alpha_{t_0}, \alpha_{t_1}, \cdots, \alpha_{t_n})=
        \sum_{i_0,i_1,\cdots,i_n =1}^m I_{\{(\alpha_{t_0}, \alpha_{t_1}, \cdots,
        \alpha_{t_n})=(i_0,i_1,\cdots,i_n)\}}N_n(i_0,i_1,\cdots,i_n)
$$
where $N_n(i_0,i_1,\cdots,i_n)$ is $\mathcal{F}_T^B$-measurable.

For $ (i_0,i_1,\cdots,i_n) \in \mathcal{M}^{n+1}$, applying It\^o's representation theorem,
$$
            N_n(i_0,i_1,\cdots,i_n )  = N_0{(i_0,i_1,\cdots,i_n)} +\int_0^TZ_t{(i_0,i_1,\cdots,i_n)}dB_t
$$
 where   $N_0(i_0,i_1,$ $\cdots,i_n)$ is a constant and  $Z{(i_0,i_1,\cdots,i_n)} \in M^2_{\mathcal{F}_t^B}(0,T;R^{k \times d})$.

 Denote \newline $\displaystyle N_0(\alpha_{t_0}, \alpha_{t_1}, \cdots,  \alpha_{t_n}) = \sum_{i_0,i_1,\cdots,i_n =1}^m I_{\{(\alpha_{t_0}, \alpha_{t_1}, \cdots, \alpha_{t_n})=(i_0,i_1,\cdots,i_n)\}} N_0{(i_0,i_1,\cdots,i_n)}$, and
 $\displaystyle Z_t(\alpha_{t_0}, \alpha_{t_1}, \cdots,\alpha_{t_n}) = \sum_{i_0,i_1,\cdots,i_n =1}^m I_{\{(\alpha_{t_0}, \alpha_{t_1}, \cdots, \alpha_{t_n})=(i_0,i_1,\cdots,i_n)\}} Z_t{(i_0,i_1,\cdots,i_n)}$. Clearly, $N_0(\alpha_{t_0}, $ $\alpha_{t_1}, \cdots,
   \alpha_{t_n})$ $ \in   L^2(\mathcal{F}_T^{\alpha};R^k)$, $\{Z_t(\alpha_{t_0}, $ $ \alpha_{t_1}, \cdots,
   \alpha_{t_n});0\leq t\leq T\} \in M^2_{\mathcal{G}_t}(0,T;R^{k\times d})$, and we have
   \begin{eqnarray} \label{eq1: formula 1 in extension of Ito}
                      N_n(\alpha_{t_0}, \alpha_{t_1}, \cdots, \alpha_{t_n}) = N_0(\alpha_{t_0}, \alpha_{t_1}, \cdots,  \alpha_{t_n})
                      +\int_0^T Z_t(\alpha_{t_0}, \alpha_{t_1}, \cdots,\alpha_{t_n})dB_t.
   \end{eqnarray}

In the remained part, we will prove that as $n \rightarrow \infty$,  both side of \eqref{eq1: formula 1 in extension of Ito}    are Cauchy sequences.

For the left hand side, as $n \rightarrow \infty$, with the definition of $ N_n(\alpha_{t_0}, \alpha_{t_1}, \cdots, \alpha_{t_n})$ and Lemma \ref{Doob's martingale convergence theorem}, we have
\[
       N_n(\alpha_{t_0}, \alpha_{t_1}, \cdots, \alpha_{t_n}) \rightarrow N.
\]
For the right hand side,  since
   \begin{eqnarray} \label{eq2: formula 2 in extension of Ito}
                                          && E\left(\int_0^T Z(\alpha_{t_0}, \alpha_{t_1}, \cdots, \alpha_{t_n})dB_t|\mathcal{F}_T^{\alpha}\right)                                          \nonumber \\ &=&\sum_{i_0,i_1,\cdots,i_n =1}^m I_{\{(\alpha_{t_0}, \alpha_{t_1}, \cdots,
                                     \alpha_{t_n})=(i_0,i_1,\cdots,i_n)\}}  E\left(\int_0^T Z{(i_0,i_1,\cdots,i_n)}
                                     dB_t|\mathcal{F}_T^{\alpha}\right)\nonumber \\  &=& 0.
\end{eqnarray}
 Thus $N_0(\alpha_{t_0}, \alpha_{t_1}, \cdots, \alpha_{t_n})=E(N_n(\alpha_{t_0}, \alpha_{t_1}, \cdots, \alpha_{t_n})|\mathcal{F}_T^{\alpha})$. As $n\rightarrow   \infty$, we can conclude that
 \[
      N_0(\alpha_{t_0}, \alpha_{t_1}, \cdots,\alpha_{t_n})    \rightarrow \  E(N|\mathcal{F}_T^{\alpha}).
 \]

 Now let us consider the sequence $\{Z_t(\alpha_{t_0}, \alpha_{t_1}, \cdots, \alpha_{t_n});0\leq t\leq T\}$. For $n,m \geq 0$,
 \allowdisplaybreaks {
 \begin{equation*}
 \begin{split}
                   & E\int_0^T\left|Z_t(\alpha_{t_0}, \alpha_{t_1}, \cdots,  \alpha_{t_n})-Z_t(\alpha_{t_0}, \alpha_{t_1}, \cdots,
                      \alpha_{t_m})\right|^2dt \\
                     = \ & E\left(\int_0^T\left(Z_t(\alpha_{t_0}, \alpha_{t_1}, \cdots,  \alpha_{t_n})-Z_t(\alpha_{t_0}, \alpha_{t_1},
                     \cdots, \alpha_{t_m})\right)dB_t\right)^2\\
                  =\ & E\big(E\left(N|\mathcal{F}_T^B\vee \sigma\{\alpha_{t_0}, \alpha_{t_1}, \cdots,
                  \alpha_{t_n}\}\right)-E\left(N|\mathcal{F}_T^B\vee \sigma\{\alpha_{t_0}, \alpha_{t_1}, \cdots,
                                 \alpha_{t_m}\}\right)\\
                  &-N_0(\alpha_{t_0}, \alpha_{t_1}, \cdots, \alpha_{t_n})+N_0(\alpha_{t_0}, \alpha_{t_1}, \cdots,
                  \alpha_{t_m}) \big)^2\\
              \leq \ & 2 E\big(E(N|\mathcal{F}_T^B\vee \sigma\{\alpha_{t_0}, \alpha_{t_1}, \cdots,
              \alpha_{t_n}\})^2-E(N|\mathcal{F}_T^B\vee \sigma\{\alpha_{t_0}, \alpha_{t_1}, \cdots,
              \alpha_{t_m}\})\big)^2\\
                &+ 2E(N_0(\alpha_{t_0}, \alpha_{t_1}, \cdots,  \alpha_{t_n})-N_0(\alpha_{t_0}, \alpha_{t_1}, \cdots,
                \alpha_{t_m}) \big)^2 \\
              \rightarrow \ & 0,  \quad \textrm{ as } n,m \rightarrow \infty.
   \end{split}
 \end{equation*}}

Thus $\{Z_t(\alpha_{t_0}, \alpha_{t_1}, \cdots, \alpha_{t_n});0\leq t\leq T\}$ is a Cauchy sequence in $M^2_{\mathcal{G}_t}(0,T;$ $R^{k \times  d})$. Hence it converges to some $Z \in M^2_{\mathcal{G}_t}(0,T;R^{k \times   d})$.

Denote $N_0 = E(N|\mathcal{F}_T^{\alpha})$, we can conclude that   $(\ref{eq1: formula 1 in extension of Ito})$ converges to the extended  It\^o's representation \eqref{eq: extension of Ito representation}.

 \noindent{{\bf Uniqueness:}}  By virtue of equation \eqref{eq2: formula 2 in extension of Ito} and the fact that  as $n \rightarrow \infty$,   $\{Z_t(\alpha_{t_0}, \alpha_{t_1}, \cdots,
   \alpha_{t_n});$ $0\leq t\leq T\}$ is a Cauchy sequence,
   we have $E(\int_0^TZ_tdB_t|\mathcal{F}_T^{\alpha})$ $=0$.
   Then, for $(N_0,Z), (N_0', Z')$ satisfying the extended It\^o's representation $(\ref{eq: extension of Ito
   representation})$,  we get $N_0=N'_0$ by taking conditional expectation with respect to
   $\mathcal{F}_T^{\alpha}$. Uniqueness of $Z$ follows easily from the fact that
 $$E\int_0^T|Z_t-Z'_t|^2dt=E\left(\int_0^T(Z_t-Z'_t)dB_t\right)^2
 =E\left(N_0-N_0'\right)^2  =0. $$
 \end{proof}

 The following corollary is useful in the proof of Theorem \ref{Thm: existence of the solution of BSDE} and its proof is similar to Proposition
\ref{Prop: extension of ito martingale}.
\begin{cor} \label{coro:martingale representation}
              For $t\leq T$, we consider the filtration $(\mathcal{N}_s)_{t\leq s\leq T}$
              defined by $\mathcal{N}_s=\mathcal{F}_s^B\vee
              \mathcal{F}_{t,T}^{\alpha}$. For $N\in L^2(\mathcal{N}_T;R^k)$,
              there exists  
              a unique stochastic process $Z=\{Z_s; t \leq s\leq T\} \in M^2_{\mathcal{N}_s}(t,T; R^{k \times d})$  such that
              $$N=E(N|\mathcal{N}_t)+\int_t^TZ_sdB_s.$$
 \end{cor}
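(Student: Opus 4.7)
The plan is to mirror the argument of Proposition \ref{Prop: extension of ito martingale}, shifting the start time from $0$ to $t$, replacing $\mathcal{F}_T^\alpha$ by $\mathcal{F}_{t,T}^\alpha$, and using the It\^o representation theorem in the form conditioned on $\mathcal{F}_t^B$ over the interval $[t,T]$. The role formerly played by the constant $N_0$ is now played by $E(N|\mathcal{N}_t)$, which is automatically $\mathcal{N}_t$-measurable.

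First I would fix dense sequences $\{t_i\}_{i\geq 0},\{t'_j\}_{j\geq 0}\subset[t,T]$ with $t_0=t'_0=t$, and set
\[
\mathcal{N}_{n,j} = \mathcal{F}_t^B \vee \sigma\{\alpha_{t_0},\ldots,\alpha_{t_n}, B_{t'_0},\ldots,B_{t'_j}\},
\]
so that $\sigma(\cup_{n,j}\mathcal{N}_{n,j})=\mathcal{N}_T$; the explicit inclusion of $\mathcal{F}_t^B$ replaces the triviality of $\mathcal{F}_0^B$ exploited in Proposition \ref{Prop: extension of ito martingale}. Then, exactly as there, Lemma \ref{Lemma:biaoshi lemma} and Lemma \ref{Doob's martingale convergence theorem} yield
\[
N_n := E(N|\mathcal{F}_T^B \vee \sigma\{\alpha_{t_0},\ldots,\alpha_{t_n}\}) = \sum_{i_0,\ldots,i_n=1}^{m} I_{\{(\alpha_{t_0},\ldots,\alpha_{t_n})=(i_0,\ldots,i_n)\}}\,N_n(i_0,\ldots,i_n),
\]
with each $N_n(i_0,\ldots,i_n)\in L^2(\mathcal{F}_T^B)$.

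Next I apply the classical It\^o representation theorem to each $N_n(i_0,\ldots,i_n)$ in the form conditioned on $\mathcal{F}_t^B$, obtaining $Z(i_0,\ldots,i_n)\in M^2_{\mathcal{F}_s^B}(t,T;R^{k\times d})$ with
\[
N_n(i_0,\ldots,i_n) = E\bigl(N_n(i_0,\ldots,i_n)\bigm|\mathcal{F}_t^B\bigr) + \int_t^T Z_s(i_0,\ldots,i_n)\,dB_s.
\]
Reassembling with the indicators produces a representation of $N_n$ whose initial term is $(\mathcal{F}_t^B \vee \sigma\{\alpha_{t_0},\ldots,\alpha_{t_n}\})$-measurable and whose integrand is $\mathcal{N}_s$-adapted on $[t,T]$. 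Letting $n\to\infty$ and invoking Doob's theorem sends the initial term in $L^2$ to $E(N|\mathcal{N}_t)$ and $N_n$ to $N$.

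The final step transcribes the Cauchy-sequence estimate of Proposition \ref{Prop: extension of ito martingale}: by the It\^o isometry the indicator-weighted $Z$-processes form a Cauchy sequence in $M^2_{\mathcal{N}_s}(t,T;R^{k\times d})$, whose limit $Z$ realises the claimed representation. Uniqueness reduces, via the It\^o isometry, to the identity $E\bigl(\int_t^T Z_s\,dB_s\bigm|\mathcal{N}_t\bigr)=0$ for every $\mathcal{N}_s$-adapted $Z$. I expect this last identity to be the only genuinely new ingredient beyond a direct transcription of Proposition \ref{Prop: extension of ito martingale}; it follows from the same indicator decomposition, the $\mathcal{F}_s^B$-adaptedness of each slice $Z(i_0,\ldots,i_n)$, and the independence of $B$ and $\alpha$.
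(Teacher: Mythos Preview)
Your proposal is correct and is precisely the adaptation the paper has in mind: the paper does not give a separate proof of Corollary~\ref{coro:martingale representation} but simply states that ``its proof is similar to Proposition~\ref{Prop: extension of ito martingale}''. The modifications you spell out---shifting the base point from $0$ to $t$, replacing $\mathcal{F}_T^\alpha$ by $\mathcal{F}_{t,T}^\alpha$, including $\mathcal{F}_t^B$ in the approximating $\sigma$-algebras, and using the conditional form of the It\^o representation on $[t,T]$---are exactly what is needed to transcribe that proof, and your identification of the limit of the initial terms as $E(N|\mathcal{N}_t)$ is the right replacement for the role of $N_0$.
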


\noindent {\bf{Step 2: The special case: the generator $f$ is independent of $y$ and $z$.}}
\begin{prop} \label{Prop:f independent of y and z}
              Under Assumption $\ref{assp:f of BSDE}$, the following
              BSDE
             \begin{equation} \label{eq:BSDE f independent of y and z}
                           Y_t=\xi+\int_t^Tf(s,\alpha_s)ds-\int_t^TZ_sdB_s,
                           \qquad 0\leq t\leq T
              \end{equation}
             has a  solution pair   $(Y,Z)\in S^2(0,T;R^k)\times M^2(0,T;R^{k\times d})$.
\end{prop}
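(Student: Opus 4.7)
The plan is to build the solution directly using a conditional expectation together with the extended It\^o representation of Corollary \ref{coro:martingale representation}. Since the generator depends on neither $y$ nor $z$, the natural candidate is
$$Y_t := E\!\left[\xi + \int_t^T f(s,\alpha_s)\,ds \,\Big|\, \mathcal{F}_t\right],\qquad 0\le t\le T,$$
which under Assumption \ref{assp:f of BSDE} is well-defined, $\mathcal{F}_t$-measurable and square integrable since $\xi+\int_t^T f(s,\alpha_s)\,ds\in L^2(\mathcal{F}_T;R^k)$.

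To produce the integrand $Z$, fix $t\in[0,T]$ and work on the filtration $\mathcal{N}_s = \mathcal{F}_s^B\vee\mathcal{F}_{t,T}^{\alpha}$ ($t\le s\le T$) of Corollary \ref{coro:martingale representation}, noting that $\mathcal{N}_t=\mathcal{F}_t$. Applied to the terminal variable $\xi+\int_t^T f(s,\alpha_s)\,ds\in L^2(\mathcal{N}_T;R^k)$, this yields a unique $Z^{(t)}\in M^2_{\mathcal{N}_s}(t,T;R^{k\times d})$ satisfying
$$Y_t = \xi + \int_t^T f(s,\alpha_s)\,ds - \int_t^T Z^{(t)}_s\,dB_s,$$
which is exactly the BSDE equality at this particular $t$.

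The main obstacle is that the integrand $Z^{(t)}$ obtained in this step depends a priori on $t$, and must be glued into a single process $Z\in M^2(0,T;R^{k\times d})$. For $0\le t_1<t_2\le T$, subtracting the two representations gives
$$\int_{t_2}^T \bigl(Z^{(t_1)}_s - Z^{(t_2)}_s\bigr)\,dB_s \;=\; Y_{t_2} - Y_{t_1} + \int_{t_1}^{t_2} f(s,\alpha_s)\,ds - \int_{t_1}^{t_2} Z^{(t_1)}_s\,dB_s.$$
The right-hand side is $\mathcal{F}_{t_2}^B\vee \mathcal{F}_{t_1,T}^{\alpha}$-measurable, and taking conditional expectation with respect to $\mathcal{F}_{t_2}$, using that stochastic integrals against $B$ have zero conditional mean once one conditions on the whole Markov chain (as established en route to Proposition \ref{Prop: extension of ito martingale}), makes it vanish. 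The uniqueness in Corollary \ref{coro:martingale representation} then forces $Z^{(t_1)}_s = Z^{(t_2)}_s$ a.s.\ on $[t_2,T]$, and setting $Z_s := Z^{(0)}_s$ defines a single integrand which is $\mathcal{F}_s$-adapted thanks to this consistency together with the Markov property of $\alpha$ and the independence of $B$ and $\alpha$. The continuous modification of $Y$ and the $S^2\times M^2$ bounds then follow from standard applications of Doob's and Burkholder-Davis-Gundy inequalities.
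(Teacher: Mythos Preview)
Your route differs from the paper's, and the gluing step contains a real gap.

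The paper does \emph{not} apply Corollary~\ref{coro:martingale representation} separately for each $t$ and then patch the pieces together. Instead it works on the genuine filtration $\mathcal{G}_t=\mathcal{F}_t^B\vee\mathcal{F}_T^{\alpha}\vee\mathcal{N}$, defines the $\mathcal{G}_t$-martingale $N_t=E[\xi+\int_0^T f(s,\alpha_s)ds\mid\mathcal{G}_t]$, and applies Proposition~\ref{Prop: extension of ito martingale} once to obtain a single process $Z\in M^2_{\mathcal{G}_t}$ with $N_t=N_0+\int_0^t Z_s\,dB_s$. Setting $Y_t=N_t-\int_0^t f(s,\alpha_s)ds$ gives a pair $(Y,Z)$ satisfying the BSDE which is automatically $\mathcal{G}_t$-adapted; the remaining work is to show that $Y_t$ and $Z_t$ are in fact $\mathcal{F}_t=\mathcal{F}_t^B\vee\mathcal{F}_{t,T}^{\alpha}$-measurable, and only at this stage is Corollary~\ref{coro:martingale representation} invoked (for $Z$). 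Continuity of $Y$ then comes for free, since $N$ is a Brownian martingale and $\int_0^{\cdot}f$ is continuous.

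Your argument, by contrast, produces for each $t$ a process $Z^{(t)}$ adapted to $\mathcal{N}^{(t)}_s=\mathcal{F}_s^B\vee\mathcal{F}_{t,T}^{\alpha}$, and must then prove consistency. The sentence ``taking conditional expectation with respect to $\mathcal{F}_{t_2}$ \ldots\ makes it vanish'' does not do this: conditioning $\int_{t_2}^T(Z^{(t_1)}_s-Z^{(t_2)}_s)\,dB_s$ on $\mathcal{F}_{t_2}$ gives zero tautologically, so the resulting identity $E[\text{RHS}\mid\mathcal{F}_{t_2}]=0$ carries no information. What you actually need is that the right-hand side is \emph{identically} zero, i.e.\ that
\[
E\Bigl[\xi+\int_{t_2}^T f(r,\alpha_r)\,dr\;\Big|\;\mathcal{F}_{t_2}^B\vee\mathcal{F}_{t_1,T}^{\alpha}\Bigr]
= E\Bigl[\xi+\int_{t_2}^T f(r,\alpha_r)\,dr\;\Big|\;\mathcal{F}_{t_2}^B\vee\mathcal{F}_{t_2,T}^{\alpha}\Bigr],
\]
which is a conditional-independence statement requiring the Markov property of $\alpha$ and the independence of $B$ and $\alpha$; this is a genuine argument, not a conditioning trick, and it is not supplied. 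Moreover, even granting consistency, uniqueness in Corollary~\ref{coro:martingale representation} cannot be invoked directly because $Z^{(t_1)}\!\upharpoonright_{[t_2,T]}$ is only $\mathcal{F}_s^B\vee\mathcal{F}_{t_1,T}^{\alpha}$-adapted, not $\mathcal{N}^{(t_2)}_s$-adapted; you should instead use the It\^o isometry on $\mathcal{G}_t$. Finally, your appeal to ``Doob's inequality'' for a continuous modification of $Y$ is unjustified: $(\mathcal{F}_t)$ is not a filtration and $Y$ is not defined as a martingale, so path regularity does not follow from your construction. The paper avoids all of these issues by building $Y$ from the $\mathcal{G}_t$-martingale $N$ at the outset.
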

\begin{proof}
             From Assumption $\ref{assp:f of BSDE}$ and H\"{o}lder  inequality, we obtain
              $$
                  E\left(\int_0^Tf(s,\alpha_s)ds\right)^2\leq C E\int_0^T |f(s,\alpha_s)|^2ds \leq C \sum_{i=1}^m E\int_0^T|f(s,i)|^2ds                     <\infty
              $$
              which yields
              $$\xi+\int_0^Tf (s,\alpha_s)ds  \in L^2({{\cal{G}}_T};R^k)$$
              For the filtration  $({\mathcal{G}}_t)_{0\leq t\leq T}$ where ${\cal{G}}_t=
              {\cal{F}}_t^B\vee {\cal{F}}_T^{\alpha}\vee {\cal{N}}= {\cal{F}}_{t}^{B} \vee {\cal{F}}_{t,T}^{\alpha}\vee
              {\cal{F}}_{t}^{\alpha}\vee {\cal{N}}  ={\cal{F}}_t \vee {\cal{F}}_{t}^{\alpha}, $
              we can define the following ${\cal{G}}_t$-measurable square integrable martingale
              $$
                N_t=E\left(\xi+\int_0^Tf(s,\alpha_s)ds|{{\cal{G}}_t}\right), \qquad 0\leq t\leq T.
              $$
               By the extended It\^{o}'s representation theorem (Proposition \ref{Prop: extension of ito martingale}), there exist
             $N_0\in L^2(\mathcal{F}_T^{\alpha};R^k)$ and  $Z=\{Z_t; 0\leq t\leq T\} \in M^2_{{\cal{G}}_t}(0,T;R^{k\times
              d})$  such that
              $$ N_t=N_0+\int_0^tZ_sdB_s,\qquad 0\leq t\leq T.$$
              For $t \in [0,T]$, we define
                \begin{equation} \label{Eq:adaptness of Y}
                \begin{split}
                               Y_t= N_t-\int_0^tf(s,\alpha_s)ds,       \textrm{ \ i.e., } Y_t
                                  =E\left(\xi+\int_t^Tf(s,\alpha_s)ds|{{\cal{G}}_t}\right).
               \end{split}
                \end{equation}
             It is easy to verify  that the ${\cal{G}}_t$-measurable process $(Y,Z)$  satisfies BSDE $(\ref{eq:BSDE f independent of y and
              z})$ and $Y \in M^2_{{\cal{G}}_t}(0,T;$ $ R^{k})$. We refer interested reader to \cite{Pardoux1990, Pardoux1999} for the detailed verification.

              The left work is to show that the processes $Y=\{Y_t;0\leq t\leq T\}$ and $Z=\{Z_t;0\leq t\leq T\}$
              are ${\cal{F}}_t$-measurable, i.e. ${\cal{F}}_t^B\vee  {\cal{F}}_{t,T}^{\alpha}$-measurable. $\forall t \in [0,T]$, we denote
              $\vartheta = \xi+\int_t^Tf(s,\alpha_s)ds$,  $\vartheta$ is ${\cal{F}}_T^B\vee
              {\cal{F}}_{t,T}^{\alpha}$-measurable.

               Let $\{\bar{t}_i\}_{i\geq 0}$,  $\{\bar{t}'_j\}_{j\geq 0}$ be respectively dense subsets of $[t,T]$ and $[0,T]$, with
               $\bar{t}_0=t$ and $\bar{t}'_0=0$. For each integer $n, m \geq 0$, let $\bar{\mathcal{G}}_{n,m}$ be the $\sigma$-algebra
               generated by $\alpha_{\bar{t}_0}, \alpha_{\bar{t}_1}, \cdots, \alpha_{\bar{t}_n},
               B_{\bar{t}'_0},B_{\bar{t}'_1},\cdots,B_{\bar{t}'_m}$, i.e., $\bar{\mathcal{G}}_{n,m}=\sigma\{\alpha_{\bar{t}_0},
               \alpha_{\bar{t}_1}, \cdots, \alpha_{\bar{t}_n},$ $ B_{\bar{t}'_0},B_{\bar{t}'_1},\cdots,B_{\bar{t}'_m}\}$.
               Obviously, $\bar{\mathcal{G}}_{n,m} \subset \bar{\mathcal{G}}_{n+1,m+1}$, $\bar{\mathcal{G}}_{n,m} \subset \bar{\mathcal{G}}_{n+1,m}$,
               $\bar{\mathcal{G}}_{n,m} \subset \bar{\mathcal{G}}_{n,m+1}$, and $\sigma(\cup_{n,m=0}^{\infty}\bar{\mathcal{G}}_{n,m})={\cal{F}}_T^B\vee
              {\cal{F}}_{t,T}^{\alpha}$.

                From Lemma \ref{Lemma:biaoshi lemma}, for each $n,m$, there
                exists a Borel measurable function $\vartheta_{nm}:\mathcal{M}^{n+1} \times R^{(m+1)\times d} \rightarrow
                R^k$ such that
              \begin{equation*}
                  \begin{split}
                     E[\vartheta|\bar{\mathcal{G}}_{n,m}]&=\vartheta_{nm}(\alpha_{\bar{t}_0}, \alpha_{\bar{t}_1}, \cdots,
                     \alpha_{\bar{t}_n},                  B_{\bar{t}'_0},B_{\bar{t}'_1},\cdots,B_{\bar{t}'_m}).
                  \end{split}
              \end{equation*}
Since $I_{\{(\alpha_{\bar{t}_0}, \alpha_{\bar{t}_1}, \cdots,
 \alpha_{\bar{t}_n})=(i_0,i_1,\cdots,i_n)\}} \in {\cal{F}}_{t,T}^{\alpha} \subset
 {\cal{F}}_{T}^{\alpha}$, we have
 \begin{equation*}
                  \begin{split}
                                & E\left(\vartheta_{nm}(\alpha_{\bar{t}_0}, \alpha_{\bar{t}_1}, \cdots, \alpha_{\bar{t}_n},
                                     B_{\bar{t}'_0},B_{\bar{t}'_1},\cdots,B_{\bar{t}'_m})|{\cal{F}}_t^B\vee
                                     {\cal{F}}_{T}^{\alpha}\right)\\
                     =& \ E\Bigg(\sum_{i_0,i_1,\cdots,i_n=1}^m I_{\{(\alpha_{\bar{t}_0}, \alpha_{\bar{t}_1}, \cdots,
 \alpha_{\bar{t}_n})=(i_0,i_1,\cdots,i_n)\}} \\
                      &\hspace{1cm} \vartheta_{nm}(i_0,i_1,\cdots,i_n,
                  B_{\bar{t}'_0},B_{\bar{t}'_1},\cdots,B_{\bar{t}'_m})|{\cal{F}}_t^B\vee
              {\cal{F}}_{T}^{\alpha}\Bigg)\\
              =& \sum_{i_0,i_1,\cdots,i_n =1}^m I_{\{(\alpha_{\bar{t}_0}, \alpha_{\bar{t}_1}, \cdots,
 \alpha_{\bar{t}_n})=(i_0,i_1,\cdots,i_n)\}} \\
                &\hspace{1cm}     E(\vartheta_{nm}(i_0,i_1,\cdots,i_n,
                  B_{\bar{t}'_0},B_{\bar{t}'_1},\cdots,B_{\bar{t}'_m})|{\cal{F}}_t^B\vee
              {\cal{F}}_{T}^{\alpha}).
                    \end{split}
    \end{equation*}
      For the reason that  $\vartheta_{nm}(i_0,i_1,\cdots,i_n,
             B_{\bar{t}'_0},B_{\bar{t}'_1},\cdots,B_{\bar{t}'_m})$  is
             $ \mathcal{F}_T^B$-measurable, by It\^o's
             representation theorem, we know that there exist $\nu_0^{n,m}(i_0,i_1,\cdots,i_n) $ $\in
             R^k$ and $Z^{n,m}(i_0,i_1,\cdots,i_n) \in $ $L^2_{\mathcal{F}_t^B}(0,T;$ $R^{k \times d})
             $ such that
             \begin{equation*}
                  \begin{split}
                         &E\left(\vartheta_{nm}(i_0,i_1,\cdots,i_n,
                  B_{\bar{t}'_0},B_{\bar{t}'_1},\cdots,B_{\bar{t}'_m})|{\cal{F}}_t^B\vee
              {\cal{F}}_{T}^{\alpha}\right)\\
              =&E\left(\nu_0^{n,m}(i_0,i_1,\cdots,i_n)+\int_0^TZ_r^{n,m}(i_0,i_1,\cdots,i_n)dB_r|{\cal{F}}_t^B\vee
              {\cal{F}}_{T}^{\alpha}\right)\\
              =&\nu_0^{n,m}(i_0,i_1,\cdots,i_n)+\int_0^tZ_r^{n,m}(i_0,i_1,\cdots,i_n)dB_r
                  \end{split}
    \end{equation*}
     is ${\cal{F}}_t^B$-measurable. Therefore
              \begin{equation*}
                  \begin{split}
                  &E\left(\vartheta_{nm}(\alpha_{\bar{t}_0}, \alpha_{\bar{t}_1}, \cdots,
 \alpha_{\bar{t}_n},
                  B_{\bar{t}'_0},B_{\bar{t}'_1},\cdots,B_{\bar{t}'_m})|{\cal{F}}_t^B\vee
              {\cal{F}}_{T}^{\alpha}\right)\\
              = & \sum_{i_0,i_1,\cdots,i_n =1}^m I_{\{(\alpha_{\bar{t}_0}, \alpha_{\bar{t}_1}, \cdots,
 \alpha_{\bar{t}_n})=(i_0,i_1,\cdots,i_n)\}}\\
                                     & \hspace{1cm} E\left(\vartheta_{nm}(i_0,i_1,\cdots,i_n,
                       B_{\bar{t}'_0},B_{\bar{t}'_1},\cdots,B_{\bar{t}'_m})|{\cal{F}}_t^B\vee
              {\cal{F}}_{T}^{\alpha}\right)       \end{split}
    \end{equation*}
      is ${\cal{F}}_t^B\vee     {\cal{F}}_{t,T}^{\alpha}$-measurable. With Lemma \ref{Doob's martingale convergence theorem},   as $ n,m\rightarrow \infty$,
      $$
                    E[\vartheta|\bar{\mathcal{G}}_{n,m}]\rightarrow E[\vartheta|{\cal{F}}_T^B\vee {\cal{F}}_{t,T}^{\alpha}]=\vartheta.
      $$
      Thus
      $ E[\vartheta|{\cal{F}}_t^B\vee {\cal{F}}_{T}^{\alpha}]$, i.e., $Y_t$, is also ${\cal{F}}_t^B\vee      {\cal{F}}_{t,T}^{\alpha}$-measurable.

             Considering
              $$\int_t^TZ_sdB_s=-Y_t+\xi+\int_t^Tf(s,\alpha_s)ds,$$
              its right side is ${\cal{F}}_T^B
              \vee{\cal{F}}_{t,T}^{\alpha}$-measurable. With Corollary $\ref{coro:martingale representation}$,
              we know  $ \forall  t<s$, $Z_s$ is ${\cal{F}}_s^B\vee
              {\cal{F}}_{t,T}^{\alpha}$-measurable.  Then, by the continuous property of the Markov chain $\alpha$, we obtain that $Z_s$
              is ${\cal{F}}_s^B\vee
              {\cal{F}}_{s,T}^{\alpha}$-measurable.

             Together with  the Burkholder-Davis-Gundy inequality and the form of BSDE $(\ref{eq:BSDE f independent of y and z})$, we can conclude that $\{Y_t;0\leq t\leq
              T\}$ is continuous and satisfies  $\displaystyle E(\sup_{0\leq t\leq T}|Y_t|^2)<
              \infty$. It yields that $Y \in S^2(0,T;R^k)$.
    \end{proof}
  \noindent {\bf{Step 3: The general case: Proof of Theorem {\ref{Thm: existence of the solution
of BSDE}}.}}
\begin{proof}
             Firstly, we    define a mapping $I$ from $M^2(0,T;R^k \times R^{k\times d})$ into itself such that $(Y,Z)\in
             S^2(0,T;R^k)\times M^2(0,T;R^{k\times d})$ is the solution to BSDE $(\ref{eq:BSDE})$ iff it is a fixed point of $I$.

             For a constant $\beta>0$, we introduce the following  equivalent  norm of $M^2(0,T;R^k \times R^{k\times d})$
              $$ \|v(\cdot)\|_{\beta}=\left(E\int_0^T|v_s|^2e^{\beta s}ds\right)^{\frac 1 2}.$$
              For $(y,z)\in M^2(0,T;R^k \times R^{k\times d})$, we set
              $$
                 Y_t=\xi
                 +\int_t^Tf(s,y_s,z_s,\alpha_s)ds-\int_t^TZ_sdB_s.
              $$
              From Assumption $\ref{assp:f of BSDE}$ and H$\ddot{\textrm{o}}$lder's inequality,
               \begin{equation*}
                               \begin{split}
                                              &E\left(\int_0^Tf(s,y_s,z_s,\alpha_s)ds\right)^2
                                              \\ \leq        &\                     2E\left(\int_0^T(f(s,y_s,z_s,\alpha_s)-f(s,0,0,\alpha_s))ds\right)^2+2E\left(\int_0^Tf(s,0,0,\alpha_s)ds\right)^2\\
                                                                                  \leq
                                         &\ C\left(E\int_0^T\left(|y_s|^2+|z_s|^2\right)ds+  \sum_{i=1}^mE\int_0^T|f(s,0,0,i)|^2ds\right)
                                        < \infty
                               \end{split}
              \end{equation*}
              which yields that $$\xi +\int_0^Tf(s,y_s,z_s,\alpha_s)ds \in
              L^2({{\cal{G}}_T};R^k).$$
              From Proposition $\ref{Prop:f independent of y and z}$,
              we can define the following contraction mapping under the norm $\|\cdot\|_{\beta}$
              $$
                I((y,z))=(Y,Z):M^2(0,T;R^k \times R^{k\times d})\rightarrow M^2(0,T;R^k \times R^{k\times d}).
              $$
              The proof of contraction property is similar to \cite{Pardoux1990, Karoui1997, Pardoux1999}. For the
              compactness of the paper, the detail is omit here.

              Together with  the  form of BSDE $(\ref{eq:BSDE})$ and Burkholder-Davis-Gundy
              inequality, $Y \in S^2(0,T;R^k)$. Thus,  by the fixed point
              theorem, we know that BSDE $(\ref{eq:BSDE})$ has a unique solution pair.
\end{proof}

\section{BSDEs with Singularly Perturbed Markov Chains} \label{section: BSDE with sungularly perturbed M C}

In this section, after recalling several relevant results of singularly perturbed Markov chains given by Zhang and Yin (\cite{Yin1997}),
 we will consider the asymptotic property of BSDE with a singularly perturbed Markov chain. Following the averaging approach to
 aggregate the states according to their jump rates and replace the actual coefficient with its average with
 respect to the quasi stationary distributions of the singularly perturbed Markov chain, we get the asymptotic probability distribution of the solution
 to the BSDE with an limit averaged Markov chain which has a  much smaller state space than the original one.

\subsection{Relevant results of singularly perturbed Markov chains} \label{subsection:singularly-perturbed Markov chain}

 Focused  on a continuous-time $\varepsilon $-dependent singularly perturbed Markov chain $\alpha^{\varepsilon}=\{\alpha^{\varepsilon}_t;0\leq t\leq T\}$  which have the generator
  $\displaystyle  Q^{\varepsilon}=\frac{1}{\varepsilon}\tilde{Q}+\hat{Q},$ where  $\tilde{Q}$ and $\hat{Q}$ are time-invariant generators, with  $\tilde{Q}=\textrm{diag}(\tilde{Q}^1,\cdots,\tilde{Q}^l)$.  The  state space can be decomposed as
  ${\cal{M}}=\{1,2,\cdots,m\}={\cal{M}}_1\cup \cdots \cup {\cal{M}}_l,$  ${\cal{M}}_k=\{s_{k1},\cdots,s_{k m_k}\}$, and for  $k\in \{1,\cdots,l\}$, $\tilde{Q}^k$ is the weakly irreducible generator\footnote{A generator $Q$ is called weakly irreducible if the
system of equations   $\nu Q=0$ and $  \sum_{i=1}^{m}\nu _i=1$ has a unique nonnegative solution. This nonnegative solution $\nu =(\nu _1,\cdots,\nu _m)$
is called the quasi-stationary distribution of $Q$.}   corresponding to the states in  ${\cal{M}}_k$. The generator $\tilde{Q}$ dictates the fast motion of the Markov
  chain and $\hat{Q}$ governs the slow motion, i.e., the underlying Markov chain fluctuates rapidly in a single group $\mathcal{M}_k$   and jumps less frequently among groups $\mathcal{M}_k$ and $\mathcal{M}_j$ for $k \neq j$.

  As shown in  \cite{Yin1997},  when  the states in $\mathcal{M}_k$ are lumped into a single state, all such states are coupled by $\hat{Q}$.  By defining $\bar{\alpha}^{\varepsilon}_t=k$, when $\alpha^{\varepsilon}_t \in
  {\cal{M}}_k$, we can obtain the aggregated process
$\bar{\alpha}^{\varepsilon}=\{\bar{\alpha}^{\varepsilon}_{t};0\leq
t\leq T\}$ containing $l$ states. The process
  $\bar{\alpha}^{\varepsilon}$ is not necessarily Markovian, but it  converges weakly to a continuous-time
  Markov chain $\bar{\alpha}$.
\begin{prop}(\cite{Yin1997}) \label{prop: converges of aggregated process}
  \noindent (i) $\bar{\alpha}^{\varepsilon}$ converges weakly to
  $\bar{\alpha}$ generated by
  $$
  \bar{Q}=\textrm{diag} (\nu^1,\cdots,\nu^l)\hat{Q}\textrm{diag}
  (\mathbb{I}_{m_1},\cdots,\mathbb{I}_{m_l})
  $$ as $\varepsilon \rightarrow 0$,
  where $\nu^k$ is the quasi-stationary distribution of
  $\tilde{Q}^k$, $k=1,\cdots,l$, and $\mathbb{I}_{k}=(1,\cdots,1)'\in R^k.$

 \noindent  (ii) For any bounded deterministic function $\beta(\cdot)$,
       $$
       E\left(\int_s^T(I_{\{\alpha^{\varepsilon}_t=s_{kj}\}}-\nu_j^k
       I_{\{\bar{\alpha}^{\varepsilon}_t=k\}})\beta(t)dt\right)^2
       =O(\varepsilon), \forall \ k=1,\cdots,l, \forall \
       j=1,\cdots, m_k.
       $$
      Here $I_{A}$ is the indicator function of a set $A$.
\end{prop}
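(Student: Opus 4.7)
The plan is to prove the two parts by singular perturbation techniques for finite-state Markov chains, with part (ii) serving as the quantitative tool that also delivers (i). The block structure $\tilde{Q} = \mathrm{diag}(\tilde{Q}^1,\ldots,\tilde{Q}^l)$ with each $\tilde{Q}^k$ weakly irreducible means that the fast dynamics equilibrate within each block $\mathcal{M}_k$ to the quasi-stationary distribution $\nu^k$ on time scale $\varepsilon$, while $\hat{Q}$ drives the slow inter-block transitions; the whole proof is about turning this picture into estimates.

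For part (ii), I would work with the transition semigroup $P^{\varepsilon}(\cdot) = \exp(\cdot\, Q^{\varepsilon})$ and establish the matched asymptotic expansion
\begin{equation*}
P\{\alpha^{\varepsilon}_r = s_{kj}\mid \alpha^{\varepsilon}_t = i\} = \nu_j^k\, P\{\bar{\alpha}^{\varepsilon}_r = k \mid \alpha^{\varepsilon}_t = i\} + O\!\left(e^{-c(r-t)/\varepsilon}\right) + O(\varepsilon),
\end{equation*}
uniformly in $i\in\mathcal{M}$, by writing $P^{\varepsilon} = P^{(0)} + \varepsilon P^{(1)} + \cdots$ plus a boundary-layer correction that absorbs the fast modes. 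Setting $\Phi^{\varepsilon}(t) = I_{\{\alpha^{\varepsilon}_t = s_{kj}\}} - \nu_j^k I_{\{\bar{\alpha}^{\varepsilon}_t = k\}}$ and expanding the square,
\begin{equation*}
E\!\left(\int_s^T\!\Phi^{\varepsilon}(t)\beta(t)\,dt\right)^2 = 2\int_s^T\!\!\int_t^T\! E\!\left[\Phi^{\varepsilon}(t)\,E[\Phi^{\varepsilon}(r)\mid\mathcal{F}_t^{\alpha^{\varepsilon}}]\right]\beta(t)\beta(r)\,dr\,dt,
\end{equation*}
the inner conditional expectation is controlled by the expansion above; the boundary-layer term contributes $\int_t^T e^{-c(r-t)/\varepsilon}\,dr = O(\varepsilon)$, the explicit $O(\varepsilon)$ term is already of the right order, and the leading term cancels because both summands carry the same integrated leading behavior. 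Boundedness of $\beta$ then yields the claimed $O(\varepsilon)$ bound.

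For part (i), I would follow the martingale problem route. Tightness of $\{\bar{\alpha}^{\varepsilon}\}$ in $D([0,T];\{1,\ldots,l\})$ is immediate since jumps of $\bar{\alpha}^{\varepsilon}$ occur only when $\alpha^{\varepsilon}$ crosses between blocks, at the bounded rate set by $\hat{Q}$. To identify the limit generator, given $f:\{1,\ldots,l\}\to R$, lift it to $\phi:\mathcal{M}\to R$ by $\phi(s_{kj}) = f(k)$; because $\phi$ is constant on each block we have $\tilde{Q}\phi \equiv 0$, so $Q^{\varepsilon}\phi = \hat{Q}\phi$ and
\begin{equation*}
M^{\varepsilon}_t \;=\; f(\bar{\alpha}^{\varepsilon}_t) - f(\bar{\alpha}^{\varepsilon}_0) - \int_0^t (\hat{Q}\phi)(\alpha^{\varepsilon}_s)\,ds
\end{equation*}
is a genuine martingale. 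Applying part (ii) with $\beta\equiv 1$ (and truncations) and summing over $k,j$ converts $\int_0^t (\hat{Q}\phi)(\alpha^{\varepsilon}_s)\,ds$ into $\int_0^t (\bar{Q}f)(\bar{\alpha}^{\varepsilon}_s)\,ds + o(1)$ in $L^2$, the coefficients $\nu_j^k \hat{q}_{(kj),(k'j')}$ assembling exactly into $\bar{Q} = \mathrm{diag}(\nu^1,\ldots,\nu^l)\hat{Q}\,\mathrm{diag}(\mathbb{I}_{m_1},\ldots,\mathbb{I}_{m_l})$. Passing to any weak subsequential limit then shows $\bar{\alpha}$ solves the martingale problem for $\bar{Q}$, and well-posedness of that problem on a finite state space identifies the limit uniquely.

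The main obstacle is establishing the matched asymptotic expansion of $P^{\varepsilon}$ with explicit exponential boundary-layer decay uniformly in $\varepsilon$; this is the technical core of the singular perturbation theory for finite Markov chains and relies on uniform spectral gap estimates for the irreducible blocks $\tilde{Q}^k$. Once that expansion is in hand, both assertions reduce to relatively routine bookkeeping.
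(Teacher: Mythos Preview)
The paper does not supply its own proof of this proposition: it is quoted verbatim as a known result from \cite{Yin1997} and invoked without argument. Your sketch is a faithful outline of the proof given in that reference---asymptotic expansion of $P^{\varepsilon}(t)=\exp(tQ^{\varepsilon})$ with an exponentially decaying boundary-layer correction for part~(ii), and the martingale-problem characterization (tightness plus generator identification via the lift $\phi(s_{kj})=f(k)$) for part~(i)---so there is no methodological divergence to discuss.

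One small remark on your write-up: the tightness claim for $\bar{\alpha}^{\varepsilon}$ is correct in spirit, but the phrase ``at the bounded rate set by $\hat{Q}$'' hides that block-crossing rates from state $s_{kj}$ are $\sum_{k'\neq k}\sum_{j'} \hat q_{(kj),(k'j')}$, which are indeed $\varepsilon$-free; you might want to make that explicit. Otherwise the proposal is sound and matches the literature the paper relies on.
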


\subsection{Weak  convergence    of  BSDEs with singularly perturbed Markov  chains}

In this subsection, denote $D(0,T;R^k)$ as the Skorohod space of c$\grave{a}$dl$\grave{a}$g
trajectories endowed with the Jakubowski S-topology (\cite{Jakubowski1997}) which is weaker than
the Skorohod topology. As shown in the appendix of \cite{Bahlali2009}, the tightness criteria under this S-topology is
the same as the  ``Meyer-Zheng tightness criteria" used in \cite{Meyer1984}.

Here, we only consider the asymptotic property of the
solution to the following BSDE with a singularly perturbed Markov
chain where the generator $f$ does not depend on
  $Z^{\varepsilon}$,
   \begin{equation} \label{eq:BSDE with perturbed}
                Y^{\varepsilon}_t=\xi+\int_t^Tf(s,Y^{\varepsilon}_s,
                \alpha^{\varepsilon}_s)ds-\int_t^TZ^{\varepsilon}_sdB_s,
  \end{equation}
  For the difficulty to study the general case that the generator $f$ depends on $Z^{\varepsilon}$, we refer interested reader to the explanation in section 6 of   \cite{Pardoux1999}.

Firstly, we make the following assumption:
\begin{assp}\label{assp2:f of BSDE }
            (i) $\xi \in L^2(\mathcal{F}_T^B;R^k).$ (ii) For $f: [0,T]\times R^k \times \mathcal{M} \rightarrow R^k$, there exists a constant $C>0$ such that $\displaystyle \sup_{\substack {0\leq t\leq T\\ 1\leq i\leq m}}|f(t,0,i)|\leq    C$.
\end{assp}
\begin{thm} \label{Thm: weak convergence of Y Z}
                Under Assumption $\ref{assp:f of BSDE}$ and Assumption $\ref{assp2:f
                of BSDE  }$, the sequence of process
                $(Y^{\varepsilon}_t,$               $\int_0^t Z^{\varepsilon}_sdB_s)$ converges  in distribution to the
                process $(Y_t, \int_0^t Z_sd\bar{B}_s)$ as $\varepsilon \rightarrow
                0$, when probability measures on $D(0,T;R^{2k})$ equipped with the Jakubowski S-topology.
                Here $(Y,Z)$ is the solution pair to  the following BSDE with the limit averaged Markov chain
\begin{equation} \label{eq:limit BSDE }
                Y_t=\xi+\int_t^T\bar{f}(s,Y_s,
                \bar{\alpha}_s)ds-\int_t^TZ_sd\bar{B}_s,
\end{equation}
$\bar{B}=\{\bar{B}_t;0\leq t\leq T\}$
with $\bar{B}_0=0$ is a $d$-dimensional Brownian motion, $\bar{\alpha}$ is  defined in
subsection \ref{subsection:singularly-perturbed Markov chain}, and
$\displaystyle \bar{f}(s,y,i)= \sum_{j=1}^{m_i}\nu^i_jf(t,y,s_{ij})$
 for $i \in
\bar{{\cal{M}}} =\{1,\cdots,l\}$.
\end{thm}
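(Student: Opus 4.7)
The plan is to follow the standard weak-convergence scheme for homogenization of BSDEs: derive uniform a priori estimates, establish tightness in the Jakubowski S-topology (via Meyer-Zheng-type conditional variation control), extract a limit along a subsequence, and identify the limit through the averaging Proposition \ref{prop: converges of aggregated process}, using uniqueness (Theorem \ref{Thm: existence of the solution of BSDE}) to upgrade convergence of subsequences to convergence of the full sequence.

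\textbf{Step 1 (uniform estimates).} Applying It\^o's formula to $e^{\beta t}|Y^{\varepsilon}_t|^2$ for a suitably large $\beta$, using Assumption \ref{assp:f of BSDE}(ii) (Lipschitz in $y$) together with Assumption \ref{assp2:f of BSDE }(ii) ($f(\cdot,0,i)$ uniformly bounded), and combining Gronwall and the Burkholder-Davis-Gundy inequality, I would obtain
\begin{equation*}
\sup_{\varepsilon>0}\Bigl(E\sup_{0\leq t\leq T}|Y^{\varepsilon}_t|^2+E\int_0^T|Z^{\varepsilon}_s|^2ds\Bigr)<\infty.
\end{equation*}

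\textbf{Step 2 (tightness).} Set $M^{\varepsilon}_t=\int_0^t Z^{\varepsilon}_s\,dB_s$ and $V^{\varepsilon}_t=\int_0^t f(s,Y^{\varepsilon}_s,\alpha^{\varepsilon}_s)\,ds$. The estimates above bound both $E\sup_t|M^{\varepsilon}_t|^2$ and the $L^2$-total variation of $V^{\varepsilon}$, so the Meyer-Zheng conditional variation criterion (which, as the authors note, coincides with Jakubowski S-tightness) yields tightness of $(Y^{\varepsilon},M^{\varepsilon})$ on $D(0,T;R^{2k})$. Proposition \ref{prop: converges of aggregated process}(i) gives tightness of $\bar{\alpha}^{\varepsilon}$, and $B$ is trivially tight in $C(0,T;R^d)$. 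By Prokhorov plus the Jakubowski-Skorokhod representation theorem, along a subsequence one may assume convergence almost surely (in the product S-topology) of $(Y^{\varepsilon},M^{\varepsilon},\bar{\alpha}^{\varepsilon},B,\xi)$ to some $(Y,M,\bar{\alpha},\tilde{B},\xi)$ on an enlarged probability space.

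\textbf{Step 3 (passing to the limit --- the main obstacle).} The delicate point is identifying the limit of $V^{\varepsilon}_t$ as $\int_0^t \bar{f}(s,Y_s,\bar{\alpha}_s)\,ds$. Write
\begin{equation*}
f(s,Y^{\varepsilon}_s,\alpha^{\varepsilon}_s)=\sum_{k=1}^{l}\sum_{j=1}^{m_k}I_{\{\alpha^{\varepsilon}_s=s_{kj}\}}f(s,Y^{\varepsilon}_s,s_{kj}),\qquad \bar{f}(s,Y^{\varepsilon}_s,\bar{\alpha}^{\varepsilon}_s)=\sum_{k=1}^{l}\sum_{j=1}^{m_k}\nu^k_j I_{\{\bar{\alpha}^{\varepsilon}_s=k\}}f(s,Y^{\varepsilon}_s,s_{kj}).
\end{equation*}
Proposition \ref{prop: converges of aggregated process}(ii) only applies directly to deterministic integrands, so I would first approximate $Y^{\varepsilon}$ by its piecewise-constant version on a deterministic grid $\{t_h\}$, use the Lipschitz property in $y$ together with $\sup_{\varepsilon}E\int_0^T|Y^{\varepsilon}_s-Y^{\varepsilon}_{t_h}|^2\,ds$ estimates, and then apply Proposition \ref{prop: converges of aggregated process}(ii) slot by slot on $[t_h,t_{h+1}]$ to each bounded deterministic coefficient $f(t_h,y,s_{kj})$. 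This shows $E|V^{\varepsilon}_t-\int_0^t\bar{f}(s,Y^{\varepsilon}_s,\bar{\alpha}^{\varepsilon}_s)ds|^2\to 0$. Then the a.s. S-convergence of $(Y^{\varepsilon},\bar{\alpha}^{\varepsilon})$ together with the Lipschitz property of $\bar{f}$ in $y$ and the step-function structure in the chain variable lets me pass to the limit and identify this term as $\int_0^t\bar{f}(s,Y_s,\bar{\alpha}_s)\,ds$.

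\textbf{Step 4 (identification and uniqueness).} By stability of martingales under S-convergence (Meyer-Zheng), $M$ is a martingale with respect to the limit filtration, and convergence of the quadratic variations $\int_0^\cdot|Z^{\varepsilon}_s|^2\,ds$ to $\langle M\rangle$ lets me, via the martingale representation theorem, write $M_t=\int_0^t Z_s\,d\bar{B}_s$ for some Brownian motion $\bar{B}$ (possibly after enlarging the space) and $Z\in M^2(0,T;R^{k\times d})$. Passing to the limit in the full equation (\ref{eq:BSDE with perturbed}) shows $(Y,Z)$ satisfies the limit BSDE (\ref{eq:limit BSDE }) with $\bar{\alpha}$ and $\bar{B}$. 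Because the limit BSDE has a unique solution in law by Theorem \ref{Thm: existence of the solution of BSDE} (applied with the averaged generator $\bar{f}$ and the Markov chain $\bar{\alpha}$), every convergent subsequence has the same limit in distribution, so the full sequence $(Y^{\varepsilon},M^{\varepsilon})$ converges in distribution to $(Y,\int_0^\cdot Z_s\,d\bar{B}_s)$ on $D(0,T;R^{2k})$, as claimed.
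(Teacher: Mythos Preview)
Your overall architecture (uniform estimates $\to$ Meyer--Zheng tightness $\to$ extraction $\to$ identification $\to$ uniqueness) is exactly the paper's scheme. Two of your technical steps, however, diverge from the paper in ways that matter.

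\textbf{Step 3 (averaging).} Your reduction to Proposition~\ref{prop: converges of aggregated process}(ii) via a time-grid does not work as stated. After freezing $Y^{\varepsilon}$ at a grid point $t_h$, the value $Y^{\varepsilon}_{t_h}$ is $\mathcal{F}^{B}_{t_h}\vee\mathcal{F}^{\alpha^{\varepsilon}}_{t_h,T}$-measurable: it depends on the \emph{future} path of $\alpha^{\varepsilon}$ on $[t_h,T]$. Hence on each slot $[t_h,t_{h+1}]$ the integrand $s\mapsto f(s,Y^{\varepsilon}_{t_h},s_{kj})$ is still random and correlated with $I_{\{\alpha^{\varepsilon}_s=s_{kj}\}}$, so Proposition~\ref{prop: converges of aggregated process}(ii), which is stated only for deterministic $\beta(\cdot)$, does not apply. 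The paper closes this gap by invoking a separate result (Proposition~\ref{prop: convergence of Markov chain}, which is Lemma~7.3 in \cite{Zhang2004}) that directly gives
\[
\sup_{0<t\leq T} E\Bigl|\int_0^t g(s,Y^{\varepsilon}_s)\bigl(I_{\{\alpha^{\varepsilon}_s=s_{ij}\}}-\nu^i_j I_{\{\alpha^{\varepsilon}_s\in\mathcal{M}_i\}}\bigr)ds\Bigr|\to 0
\]
for random integrands $g(s,Y^{\varepsilon}_s)$ of linear growth. Its proof uses the structure of the singularly perturbed chain and is not a corollary of the deterministic version via your discretization.

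\textbf{Step 4 (identification of $M$).} You propose to recover $Z$ from the limit martingale $M$ by martingale representation, after claiming convergence of the quadratic variations $\int_0^{\cdot}|Z^{\varepsilon}_s|^2ds$ to $\langle M\rangle$. Under the S-topology this convergence is not available, and in any case you would still have to show that $M$ is a martingale in a filtration generated by a specific Brownian motion $\bar B$. The paper avoids both issues: it first shows (Proposition~\ref{Prop: Limit of BSDE with Y,M}(ii)) that the limit $\bar M$ is an $\mathcal{H}_t=\mathcal{F}^{\bar B}_t\vee\mathcal{F}^{\bar\alpha}_T$--martingale via a test-function argument, then \emph{independently} solves the limit BSDE~(\ref{eq:limit BSDE }) with $\bar B$ and $\bar\alpha$ to get $(Y,Z)$, and finally applies It\^o's formula to $|Y_t-\bar Y_t|^2$ plus Gronwall (Proposition~\ref{Prop:identification of M}) to conclude $\bar Y\equiv Y$ and $\bar M\equiv\int_0^{\cdot}Z_s\,d\bar B_s$. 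This comparison argument is what replaces your appeal to quadratic-variation convergence and martingale representation on the limit.
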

\begin{rem}
           It is obvious that the limit BSDE depends on the limit averaged Markov chain $\bar{\alpha}$ with a state space  much smaller than that of the original singularly perturbed Markov chain $\alpha^{\varepsilon}$.
Moreover, as
$\varepsilon\rightarrow 0$, the
$\mathcal{F}^{\alpha^{\varepsilon}}_T$-measurable random variables
sequence $(Y^{\varepsilon}_0)$ converges in distribution to the
                random variable $Y_0$ which is $\mathcal{F}^{\bar{\alpha}}_T$-measurable.
    \end{rem}
For the proof of Theorem \ref{Thm: weak convergence of Y Z}, we follow a classical approach  as in \cite{Pardoux1997, Bahlali2009} to prove the weak convergence of BSDE: after showing the tightness and convergence for $(Y_t^{\varepsilon},\int_0^t Z^{\varepsilon}_sdB_s)$, we identify the limit.


      \noindent {\bf{Step 1: Tightness and convergence for $(Y_t^{\varepsilon},\int_0^t Z^{\varepsilon}_sdB_s)$.}}

%
 \begin{prop} \label{Prop: boundness of Y M}
 Under Assumption $\ref{assp:f of BSDE}$ and Assumption $\ref{assp2:f of BSDE
  }$, BSDE $(\ref{eq:BSDE with perturbed})$ and BSDE $(\ref{eq:limit BSDE })$
   have unique solutions $(Y^{\varepsilon}, Z^{\varepsilon})$ and $  (Y, Z) \in S^2(0,T;R^k)$ $\times M^2(0,T;R^{k\times
  d})$. Moreover, there exists a positive constant $C$ such that $\forall \varepsilon>0$,
  \begin{equation*}
             \begin{split}
                                                E\left(\sup_{0\leq t\leq
                                                T}|Y^{\varepsilon}_t|^2+\int_0^T(Z^{\varepsilon}_t)^2dt\right)
                                                                 &\leq          C,\\
                                               E\left(\sup_{0\leq t\leq
                                               T}|Y_t|^2+\int_0^T(Z_t)^2dt\right)
                                                   & \leq C.
              \end{split}
  \end{equation*}
 \end{prop}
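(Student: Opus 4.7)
The plan is to derive Proposition 3.1 in two stages: first, invoke Theorem 2.1 to get existence and uniqueness for each of the two BSDEs, and then derive the uniform a-priori estimate by It\^o's formula applied to $e^{\beta t}|Y^\varepsilon_t|^2$ (and to $e^{\beta t}|Y_t|^2$) for a suitably large $\beta>0$.

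For the existence and uniqueness half, I would verify that both generators satisfy Assumption 2.1 with constants that do not depend on $\varepsilon$. For BSDE \eqref{eq:BSDE with perturbed}, Assumption 2.1(ii) (applied with $z$ absent) gives the Lipschitz condition in $y$ uniformly in $i\in\mathcal M$, while Assumption 3.2(ii) together with boundedness on $[0,T]$ yields $f(\cdot,0,i)\in M^2_{\mathcal F^B_t}(0,T;R^k)$. Hence Theorem 2.1 delivers a unique $(Y^\varepsilon,Z^\varepsilon)\in S^2\times M^2$. For the limit BSDE \eqref{eq:limit BSDE }, the averaged generator $\bar f(s,y,i)=\sum_{j=1}^{m_i}\nu^i_j f(s,y,s_{ij})$ is a convex combination of values of $f$ in the last argument, so it inherits the Lipschitz constant $\mu$ in $y$ and satisfies $|\bar f(s,0,i)|\le C$; Theorem 2.1 applies again to give the unique $(Y,Z)$.

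For the uniform $L^2$-estimate, I would apply It\^o's formula to $e^{\beta t}|Y^\varepsilon_t|^2$ to obtain
\begin{equation*}
e^{\beta t}|Y^\varepsilon_t|^2+\int_t^T e^{\beta s}\bigl(\beta|Y^\varepsilon_s|^2+|Z^\varepsilon_s|^2\bigr)ds=e^{\beta T}|\xi|^2+2\int_t^T e^{\beta s}Y^\varepsilon_s\,f(s,Y^\varepsilon_s,\alpha^\varepsilon_s)ds-2\int_t^T e^{\beta s}Y^\varepsilon_s Z^\varepsilon_s dB_s.
\end{equation*}
Combining Assumption 2.1 and Assumption 3.2(ii) gives the linear-growth bound $|f(s,y,i)|\le C+\mu|y|$, and Young's inequality then yields $2|Y^\varepsilon_s|\,(C+\mu|Y^\varepsilon_s|)\le C^2+(1+2\mu)|Y^\varepsilon_s|^2$. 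Choosing $\beta>1+2\mu$ and taking expectation (the stochastic integral is a true martingale after standard localization, since $Z^\varepsilon\in M^2$), one obtains
\begin{equation*}
E\bigl[e^{\beta t}|Y^\varepsilon_t|^2\bigr]+E\int_t^T e^{\beta s}|Z^\varepsilon_s|^2\,ds\le e^{\beta T}E|\xi|^2+\frac{C^2}{\beta}\bigl(e^{\beta T}-e^{\beta t}\bigr).
\end{equation*}
This already furnishes the required uniform bound on $E\int_0^T|Z^\varepsilon_s|^2 ds$. To upgrade to $E\sup_t|Y^\varepsilon_t|^2$, I would rewrite the BSDE as $Y^\varepsilon_t=\xi+\int_t^T f(s,Y^\varepsilon_s,\alpha^\varepsilon_s)ds-\int_t^T Z^\varepsilon_s dB_s$ and combine the $C+\mu|y|$ growth with the Burkholder-Davis-Gundy inequality applied to the martingale $\int_0^{\cdot} Z^\varepsilon_s dB_s$; absorbing the resulting $\tfrac12 E\sup|Y^\varepsilon|^2$ term on the right produces the desired inequality. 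Verbatim repetition with $\bar f$ and $\bar B$ in place of $f$ and $B$ handles $(Y,Z)$.

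The only genuine point to monitor is that the uniformity in $\varepsilon$ is preserved throughout. Because $\mu$ (Assumption 2.1) and the sup-bound $C$ on $|f(\cdot,0,i)|$ (Assumption 3.2) depend only on the generator, and the chain $\alpha^\varepsilon$ enters the estimate solely through the pointwise bound $|f(s,y,\alpha^\varepsilon_s)|\le C+\mu|y|$, no $\varepsilon$-dependence ever appears in the constants, so the final constant in the conclusion depends only on $\mu$, $C$, $T$, and $E|\xi|^2$. This step, though conceptually easy, is the one to write with a little care; the rest of the argument is standard.
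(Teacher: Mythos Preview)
Your proof is correct and follows essentially the same strategy as the paper: existence/uniqueness via Theorem~2.1, then an a-priori estimate from It\^o's formula combined with the linear growth $|f(s,y,i)|\le C+\mu|y|$, and finally Burkholder--Davis--Gundy to pass to the supremum. The only cosmetic difference is that the paper applies It\^o to the unweighted $|Y^\varepsilon_s|^2$ and closes the estimate with Gronwall's lemma, whereas you use the exponential weight $e^{\beta t}$ with $\beta>1+2\mu$ to absorb the $|Y^\varepsilon|^2$ term directly; both devices are standard and interchangeable here.
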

 \begin{proof}
              For BSDE $(\ref{eq:BSDE with perturbed})$, by Theorem \ref{Thm: existence of
              the solution of BSDE}, the existence and uniqueness of solution $(Y^{\varepsilon}, Z^{\varepsilon})$  is obtained for all               $\varepsilon>0$.

              Using It\^o's formula to $|Y^{\varepsilon}_s|^2$ on $[t,T]$,   we get the following from Schwartz's inequality,
              \begin{equation*}
                               \begin{split}
                                               &|Y^{\varepsilon}_t|^2+\int_t^T|Z^{\varepsilon}_s|^2ds\\
                                             = & \ |\xi|^2+2\int_t^TY^{\varepsilon}_s
                                             f(s,Y^{\varepsilon}_s,Z^{\varepsilon}_s,\alpha_s)ds-2\int_t^T
                                                Y^{\varepsilon}_s Z^{\varepsilon}_s dB_s\\
                                          \leq & \
                                          |\xi|^2+2\int_t^T\left((1+\mu^2)|Y^{\varepsilon}_s|^2+|f(s,0,0,\alpha^{\varepsilon}_s)|^2
                                                            \right)ds-2\int_t^T Y^{\varepsilon}_s Z^{\varepsilon}_s
                                                 dB_s
                               \end{split}
              \end{equation*}
               here $\mu$ is the Lipschitz constant of $f$ which is independent of $\varepsilon$. By taking  expectation, we can deduce
               $$
               E\left(|Y^{\varepsilon}_t|^2+\frac{1}{2}\int_t^T|Z^{\varepsilon}_s|^2ds\right) \leq
               |\xi|^2+2\int_t^T((1+\mu^2)|Y^{\varepsilon}_s|^2+|f(s,0,0,\alpha^{\varepsilon}_s)|^2)ds.
               $$ From Gronwall's lemma, we  get
              $$
               \displaystyle
                     E\left(|Y^{\varepsilon}_t|^2+\int_t^T|Z^{\varepsilon}_s|^2ds\right)
                \leq CE\left(|\xi|^2+\int_0^T|f(s,0,0,\alpha^{\varepsilon}_s)|^2ds\right)\leq C,
             $$
             and then the estimation for $(Y^{\varepsilon}, Z^{\varepsilon})$ is obtained from the Burkholder-Davis-Gundy
             inequality.

             From the form  of $\bar{f}$ presented in Theorem \ref{Thm: weak convergence of Y Z}, we know that $\bar{f}$ also satisfies Assumption $\ref{assp:f of BSDE}$ and Assumption $\ref{assp2:f of BSDE
  }$, thus the estimation about $(Y, Z)$ can be obtained similarly.
 \end{proof}

  We set $M_t^{\varepsilon}=\int_0^tZ^{\varepsilon}_sdB_s$  for the convenience. Thus BSDE $(\ref{eq:BSDE with perturbed})$  can be rewritten as
                           \begin{equation} \label{eq: BSDE with perturbed in M form}
                              Y^{\varepsilon}_t=\xi+\int_t^Tf(s,Y^{\varepsilon}_s,
                               \alpha^{\varepsilon}_s)ds-(M_T^{\varepsilon}-M_t^{\varepsilon}).
                            \end{equation}
 \begin{prop} \label{Prop: tightness of Y M}
                                    The sequence of $(Y^{\varepsilon},M^{\varepsilon})$ is tight on the space $D(0,T;$ $R^k) \times
                                    D(0,T;R^k).$
\end{prop}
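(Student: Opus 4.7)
The plan is to invoke the Meyer--Zheng tightness criterion, which, as the authors note just before stating the proposition, coincides with the tightness criterion for the Jakubowski S-topology on $D(0,T;R^k)$. Recall that this criterion says a family of càdlàg processes $\{X^\varepsilon\}$ is tight provided
\[
\sup_\varepsilon \Bigl(E\bigl[\sup_{0\le t\le T}|X^\varepsilon_t|\bigr] + \mathrm{CV}_0^T(X^\varepsilon)\Bigr) < \infty,
\]
where $\mathrm{CV}_0^T(X^\varepsilon) = \sup_\pi E \sum_i |E[X^\varepsilon_{t_{i+1}}-X^\varepsilon_{t_i} \mid \mathcal{F}_{t_i}]|$ is the conditional variation taken over partitions $\pi$ of $[0,T]$. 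I would verify the two conditions for $M^\varepsilon$ and for $Y^\varepsilon$ separately, exploiting the a priori estimate of Proposition \ref{Prop: boundness of Y M}.

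For the martingale part $M^\varepsilon_t=\int_0^t Z^\varepsilon_s dB_s$, the conditional variation is identically zero because $E[M^\varepsilon_{t_{i+1}}-M^\varepsilon_{t_i}\mid\mathcal{F}_{t_i}]=0$. For the supremum, Doob's maximal inequality combined with the It\^o isometry and Proposition \ref{Prop: boundness of Y M} gives
\[
E\bigl[\sup_{0\le t\le T}|M^\varepsilon_t|\bigr]^2 \le E\bigl[\sup_{0\le t\le T}|M^\varepsilon_t|^2\bigr] \le 4 E\int_0^T|Z^\varepsilon_s|^2ds \le C,
\]
uniformly in $\varepsilon$, so $M^\varepsilon$ is tight.

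For $Y^\varepsilon$, I would use the semimartingale decomposition read off from \eqref{eq: BSDE with perturbed in M form},
\[
Y^\varepsilon_t = Y^\varepsilon_0 - \int_0^t f(s,Y^\varepsilon_s,\alpha^\varepsilon_s)\,ds + M^\varepsilon_t.
\]
The conditional variation of the finite-variation part is dominated by the $L^1$-norm of its total variation, so
\[
\mathrm{CV}_0^T(Y^\varepsilon) \le E\int_0^T |f(s,Y^\varepsilon_s,\alpha^\varepsilon_s)|\,ds.
\]
Using the Lipschitz assumption together with Assumption \ref{assp2:f of BSDE } to bound $|f(s,0,\alpha^\varepsilon_s)|\le C$, followed by Cauchy--Schwarz and Proposition \ref{Prop: boundness of Y M},
\[
E\int_0^T |f(s,Y^\varepsilon_s,\alpha^\varepsilon_s)|\,ds \le CT + \mu T^{1/2}\Bigl(E\int_0^T|Y^\varepsilon_s|^2ds\Bigr)^{1/2} \le C,
\]
uniformly in $\varepsilon$. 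Since Proposition \ref{Prop: boundness of Y M} also yields $\sup_\varepsilon E[\sup_t|Y^\varepsilon_t|]<\infty$, the Meyer--Zheng criterion applies to $Y^\varepsilon$ as well. Combining the two tightnesses gives the joint tightness of $(Y^\varepsilon,M^\varepsilon)$ on $D(0,T;R^k)\times D(0,T;R^k)$.

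I do not anticipate a genuine obstacle here: the proof is essentially a direct bookkeeping verification, and the only subtlety is making sure to invoke the Meyer--Zheng/Jakubowski correspondence referenced in the appendix of \cite{Bahlali2009} so that the conditional-variation bound translates into tightness for the S-topology rather than the stronger Skorokhod topology (which would fail here because $\alpha^\varepsilon$ jumps arbitrarily fast as $\varepsilon\to 0$). The subsequent step, identifying the limit, will be where the real work of the weak-convergence argument lies.
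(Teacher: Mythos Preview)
Your approach is essentially the paper's: apply the Meyer--Zheng conditional-variation criterion, observe that $M^\varepsilon$ is a martingale so its conditional variation vanishes, and bound $\mathrm{CV}(Y^\varepsilon)$ by $E\int_0^T|f(s,Y^\varepsilon_s,\alpha^\varepsilon_s)|\,ds$ using the Lipschitz and growth assumptions together with the a~priori estimate of Proposition~\ref{Prop: boundness of Y M}. Two minor points of alignment with the paper: (i) the filtration under which you take the conditional expectations should be the increasing one $\mathcal{G}^\varepsilon_t=\mathcal{F}^B_t\vee\mathcal{F}^{\alpha^\varepsilon}_T\vee\mathcal{N}$, not $\mathcal{F}_t=\mathcal{F}^B_t\vee\mathcal{F}^{\alpha^\varepsilon}_{t,T}$, since the latter is not a filtration and the martingale property of $M^\varepsilon$ is established (in the proof of Proposition~\ref{Prop:f independent of y and z}) with respect to $\mathcal{G}^\varepsilon_t$; (ii) the paper states the criterion with $\sup_t E|X^\varepsilon_t|$ rather than $E\sup_t|X^\varepsilon_t|$, but your stronger bound certainly suffices.
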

 \begin{proof}
              Let ${\cal{G}}_t^{\varepsilon}={\cal{F}}^B_t\vee {\cal{F}}_T^{\alpha^{\varepsilon}} \vee {\cal{N}},$
                                     we define the conditional variation
                                     $$
                                          CV(Y^{\varepsilon})=\sup
                                       E\left(
                                       \sum_i|E(Y^{\varepsilon}_{t_{i+1}}-Y^{\varepsilon}_{t_i}|{\cal{G}}^{\varepsilon}_{t_i})
                                       |\right)
                                     $$
               where the supreme is taken over all    partitions of the interval   $[0,T]$.

               From the Proof of Proposition     $\ref{Prop:f independent of y and   z}$, we know that $M^{\varepsilon}$ is a
               ${\cal{G}}_t^{\varepsilon}$-martingale.       It follows that
                  $$
                                          CV(Y^{\varepsilon})
                                          \leq
                                          E\int_0^T|f(s,Y^{\varepsilon}_s,\alpha^{\varepsilon}_s)|ds.
                    $$
                  From   $(ii)$ of Assumption $\ref{assp:f of BSDE}$, $(ii)$ of  Assumption $\ref{assp2:f of BSDE }$, and Proposition                   $\ref{Prop: boundness of Y   M}$, we know
                                     $$
                                        \sup_{\varepsilon}\left(CV(Y^{\varepsilon})+\sup_{0\leq t\leq
                                        T}E|Y^{\varepsilon}_t|+\sup_{0\leq t\leq
                                        T}E|M^{\varepsilon}_t|\right)
                                       < \infty.
                                     $$
                                     Thus the ``Meyer-Zheng tightness criteria" (\cite{Bahlali2009, Meyer1984}) is fully satisfied   and the result is followed.
\end{proof}
Together with the properties of $Y^{\varepsilon}$ obtained above, the following proposition can be seen as an obvious result of Lemma 7.3 in \cite{Zhang2004}.
\begin{prop} \label{prop: convergence of Markov chain}
             Suppose  $g(t,x)$ is a function defined on $[0,T] \times R^m$ satisfying that $g(\cdot,\cdot)$ is Lipschitz continuous
             with $x$ and  $ \forall  x\in R^m$, either $|g(t,x)| \leq K(1+|x|)$ or $|g(t,x)| \leq K$. Denote $\pi_{ij}^{\varepsilon}(t)=\pi_{ij}^{\varepsilon}(t,
\alpha^{\varepsilon}_t)$, with $\pi_{ij}^{\varepsilon}(t,
\alpha)=I_{\{\alpha=s_{ij}\}}-\nu_{j}^iI_{\{\alpha \in M_i\}}$, then
for any $\ k=1,\cdots,l,
       j=1,\cdots, m_k$, $$\sup_{0 <t\leq T}E\left|\int_0^tg(t,Y_s^{\varepsilon}) \pi_{ij}^{\varepsilon}(s,
\alpha^{\varepsilon}_s)ds\right| \rightarrow 0, \textrm{\quad as }
\varepsilon \rightarrow 0.$$
\end{prop}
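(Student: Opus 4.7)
The plan is to freeze the process $Y^\varepsilon$ on a fine time grid and then apply the mixing estimate Proposition \ref{prop: converges of aggregated process}(ii) to the rapid fluctuations of $\alpha^\varepsilon$ on each mesh interval. The two essential inputs are the uniform $S^2$-bound $E\sup_{t\le T}|Y^\varepsilon_t|^2\le C$ coming from Proposition \ref{Prop: boundness of Y M}, which together with the Lipschitz/growth assumption on $g$ gives $\sup_\varepsilon\sup_{s\le T} E|g(s,Y^\varepsilon_s)|^2\le C$, and the time-regularity of $Y^\varepsilon$ coming directly from the BSDE \eqref{eq:BSDE with perturbed}.

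First I would fix a partition $0=t_0<t_1<\cdots<t_N=T$ of mesh $\delta=T/N$ and split
\[
\int_0^t g(s,Y_s^\varepsilon)\pi_{ij}^\varepsilon(s)\,ds \;=\; A^\varepsilon_\delta(t) + R^\varepsilon_\delta(t),
\]
where $A^\varepsilon_\delta(t) = \sum_{k:\,t_k<t} g(t_k,Y_{t_k}^\varepsilon)\int_{t_k}^{t_{k+1}\wedge t}\pi_{ij}^\varepsilon(s)\,ds$ is the frozen sum obtained by replacing the $x$-argument of $g$ by its value at the left endpoint, and $R^\varepsilon_\delta(t)$ collects the residual. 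Using Lipschitz continuity of $g$ in $x$ together with the identity $Y^\varepsilon_s-Y^\varepsilon_{t_k} = -\int_{t_k}^s f(u,Y^\varepsilon_u,\alpha^\varepsilon_u)\,du + \int_{t_k}^s Z^\varepsilon_u\,dB_u$, Proposition \ref{Prop: boundness of Y M} furnishes $\sum_k\int_{t_k}^{t_{k+1}}E|Y^\varepsilon_s-Y^\varepsilon_{t_k}|^2\, ds = O(\delta)$. Since $|\pi_{ij}^\varepsilon|\le 1$, Cauchy--Schwarz then gives $\sup_\varepsilon\sup_{t\le T}E|R^\varepsilon_\delta(t)|\le C\delta^{1/2}$.

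Second, for the frozen sum I would apply Cauchy--Schwarz block by block:
\[
E|A^\varepsilon_\delta(t)| \;\le\; \sum_{k}\bigl(E|g(t_k,Y^\varepsilon_{t_k})|^2\bigr)^{1/2}\Bigl(E\Bigl|\int_{t_k}^{t_{k+1}}\pi_{ij}^\varepsilon(s)\,ds\Bigr|^2\Bigr)^{1/2}.
\]
The first factor is uniformly bounded in $\varepsilon$ and $k$. For the second, the identity $\int_{t_k}^{t_{k+1}}\pi_{ij}^\varepsilon\,ds = \int_{t_k}^T\pi_{ij}^\varepsilon\,ds - \int_{t_{k+1}}^T\pi_{ij}^\varepsilon\,ds$ together with Proposition \ref{prop: converges of aggregated process}(ii) applied with $\beta\equiv 1$ gives an $O(\sqrt{\varepsilon})$ bound. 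Summing over the $N = T/\delta$ blocks yields $\sup_{t\le T} E|A^\varepsilon_\delta(t)|\le CT\sqrt{\varepsilon}/\delta$.

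Combining the two estimates, $\sup_{0<t\le T} E\bigl|\int_0^t g(s,Y^\varepsilon_s)\pi_{ij}^\varepsilon(s)\,ds\bigr|\le C\delta^{1/2} + CT\sqrt{\varepsilon}/\delta$; choosing for instance $\delta=\varepsilon^{1/4}$ drives both terms to zero as $\varepsilon\to 0$, yielding the claim. The main obstacle is the looseness of the block-wise Cauchy--Schwarz step, which forces a square-root rate; the cleaner approach, alluded to by the remark that the statement is an immediate consequence of Lemma 7.3 in \cite{Zhang2004}, uses a conditional-on-$\mathcal{F}_{t_k}$ version of Proposition \ref{prop: converges of aggregated process}(ii) in which the Markov property of $\alpha^\varepsilon$ lets one pull out the $\mathcal{F}_{t_k}$-measurable factor $g(t_k,Y^\varepsilon_{t_k})$ directly without incurring this loss.
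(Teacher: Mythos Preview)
The paper does not actually prove this proposition: it simply remarks that, combined with the uniform $S^2$-bound on $Y^\varepsilon$ from Proposition~\ref{Prop: boundness of Y M}, the statement is ``an obvious result of Lemma~7.3 in \cite{Zhang2004}.'' Your write-up therefore supplies a self-contained argument where the paper gives none, and the strategy---discretize time, freeze $Y^\varepsilon$ at the grid points, control the freezing error via the BSDE increment estimate $\sum_k\int_{t_k}^{t_{k+1}}E|Y^\varepsilon_s-Y^\varepsilon_{t_k}|^2\,ds=O(\delta)$, and then handle each frozen block by Cauchy--Schwarz together with the $O(\varepsilon)$ mixing bound of Proposition~\ref{prop: converges of aggregated process}(ii)---is sound. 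Crucially, the Cauchy--Schwarz step needs no independence between $Y^\varepsilon_{t_k}$ and $\alpha^\varepsilon$, which matters here.

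One caveat on your closing remark. The ``cleaner'' conditioning argument you sketch---pull out $g(t_k,Y^\varepsilon_{t_k})$ by conditioning on $\mathcal{F}_{t_k}$ and then invoke the Markov property of $\alpha^\varepsilon$---is indeed the device behind Lemma~7.3 of \cite{Zhang2004} in a forward setting, but it does not transfer verbatim here: $Y^\varepsilon_{t_k}$ is $\mathcal{F}^B_{t_k}\vee\mathcal{F}^{\alpha^\varepsilon}_{t_k,T}$-measurable, so it depends on the \emph{future} of $\alpha^\varepsilon$, and any $\sigma$-field rich enough to make $Y^\varepsilon_{t_k}$ measurable already determines $\pi^\varepsilon_{ij}$ on $[t_k,t_{k+1}]$. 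Your Cauchy--Schwarz route, though coarser in rate, is in fact the more robust argument in this backward setting. A minor related point: since the proposition is applied with $g(s,Y^\varepsilon_s)$ (first argument the running variable), freezing both slots at $t_k$ implicitly uses continuity of $g$ in $t$, which is not assumed; it is cleaner to freeze only the $x$-slot.
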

      \noindent {\bf{Step 2: Identification of the limit.}}
\\[2mm]
       From Proposition $\ref{Prop: tightness of Y  M}$, we know that there exists a subsequence of $(Y^{\varepsilon}, M^{\varepsilon})$, which we still denote by $(Y^{\varepsilon},M^{\varepsilon})$,    and which  converges in distribution on the space $D(0,T;R^k) \times D(0,T;R^k)$ toward a c$\grave{a}$dl$\grave{a}$g process
                                $(\bar{Y},\bar{M})$. Furthermore, there exists a countable subset $D$ of
                                $[0,T]$, such that $(Y^{\varepsilon},M^{\varepsilon})$
                                converges in finite-distribution to
                                $(\bar{Y},\bar{M})$ on $D^c$.
  \begin{prop} \label{Prop: Limit of BSDE with Y,M}
                                            For the limit process $(\bar{Y},\bar{M})$, we have

                                           \noindent (i) For every $t \in [0,T] -
                                                D$,
                                                $$
                                                    \bar{Y}_t=\xi+\int_t^T\bar{f}(s,\bar{Y}_s,\bar{\alpha}_s)ds-(\bar{M}_T-\bar{M}_t).
                                                $$

                                            \noindent (ii) For a $d$-dimensional Brownian motion
                                            $\bar{B}=\{\bar{B}_t;0\leq t\leq T\}$ with $\bar{B}_0=0$,  $\bar{Y}$ is  measurable with  ${\cal{H}}_t={\cal{F}}^{\bar{B}}_t
                                             \vee    {\cal{F}}^{\bar{\alpha}}_{T}$,
                                             then $\bar{M}$ is a
                                             ${\cal{H}}_t$-martingale.
 \end{prop}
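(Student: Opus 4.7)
The overall strategy is to pass to the limit term-by-term in the rewritten equation (3.3), using the finite-dimensional convergence of $(Y^{\varepsilon},M^{\varepsilon})$ on $[0,T]\setminus D$ provided by Proposition \ref{Prop: tightness of Y M} together with the averaging estimate of Proposition \ref{prop: convergence of Markov chain} and the weak convergence of the aggregated chain in Proposition \ref{prop: converges of aggregated process}. Part (i) is handled by identifying the limit of the drift integral; part (ii) is then obtained by a martingale-problem argument combined with a representation theorem.

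For part (i), fix $t\in[0,T]\setminus D$ (with $T\notin D$, possibly after shrinking $D$). The terminal $\xi$ and the increment $M_T^{\varepsilon}-M_t^{\varepsilon}$ pass to their natural limits by finite-dimensional convergence. The delicate term is the drift, which I would split as
\[
\int_t^T f(s,Y^{\varepsilon}_s,\alpha^{\varepsilon}_s)\,ds
=\int_t^T \bar f(s,Y^{\varepsilon}_s,\bar\alpha^{\varepsilon}_s)\,ds
+\sum_{i=1}^{l}\sum_{j=1}^{m_i}\int_t^T f(s,Y^{\varepsilon}_s,s_{ij})\,\pi^{\varepsilon}_{ij}(s,\alpha^{\varepsilon}_s)\,ds.
\]
Each summand in the second sum converges to $0$ in $L^1$ by Proposition \ref{prop: convergence of Markov chain}, since $(s,y)\mapsto f(s,y,s_{ij})$ is Lipschitz in $y$ and of linear growth by Assumptions \ref{assp:f of BSDE} and \ref{assp2:f of BSDE }. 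For the first term I would use the joint weak convergence of $(Y^{\varepsilon},\bar\alpha^{\varepsilon})$ to $(\bar Y,\bar\alpha)$, the Lipschitz continuity of $\bar f$ in $y$, and the uniform $L^2$-bound on $Y^{\varepsilon}$ from Proposition \ref{Prop: boundness of Y M} to conclude via dominated/uniform-integrability arguments that the limit equals $\int_t^T \bar f(s,\bar Y_s,\bar\alpha_s)\,ds$. Reassembling the three pieces delivers the stated equation.

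For part (ii), I would first show that $\bar M$ is a martingale with respect to its own filtration: the conditional variation bound used in Proposition \ref{Prop: tightness of Y M} shows that $(M^{\varepsilon})$ is uniformly integrable, and the Meyer--Zheng framework (\cite{Meyer1984,Bahlali2009}) guarantees that the martingale property is preserved under the S-topology limit. Part (i) then expresses $\bar Y$ as a continuous process plus $\bar M$ (the drift integral is absolutely continuous), so $\bar M$ inherits continuity from $\bar Y$ and the continuity of the drift. A quadratic-variation argument, passing to the limit in $\langle M^{\varepsilon}\rangle_t=\int_0^t |Z^{\varepsilon}_s|^2 ds$, yields a predictable process $Z$ with $\langle\bar M\rangle_t=\int_0^t|Z_s|^2ds$. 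Applying the martingale representation theorem (after enlarging the probability space if needed) produces a $d$-dimensional Brownian motion $\bar B$ such that $\bar M_t=\int_0^t Z_s\,d\bar B_s$. The $\mathcal{H}_t$-measurability of $\bar Y$ is then forced by the limiting BSDE, exactly as in the proof of Proposition \ref{Prop:f independent of y and z}.

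The hardest step is the identification of the drift limit, because the S-topology is strictly weaker than Skorokhod's topology, so continuous-mapping type arguments are unavailable for nonlinear functionals of the path. The averaging estimate in Proposition \ref{prop: convergence of Markov chain} is what compensates for this by decoupling the fast oscillations of $\alpha^{\varepsilon}$ from the slow dynamics of $Y^{\varepsilon}$; controlling the residual coupling between $Y^{\varepsilon}$ and $\alpha^{\varepsilon}$ (Khasminskii-type averaging) is the central technical point, and is the reason the theorem is restricted to generators $f$ independent of $Z^{\varepsilon}$.
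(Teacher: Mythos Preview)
Your plan for part (i) is essentially the paper's: the same decomposition of the drift into the averaged term $\int_t^T\bar f(s,Y^{\varepsilon}_s,\bar\alpha^{\varepsilon}_s)\,ds$ plus the oscillation remainder handled by Proposition \ref{prop: convergence of Markov chain}, followed by weak convergence of $(Y^{\varepsilon},\bar\alpha^{\varepsilon})$ to $(\bar Y,\bar\alpha)$.

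For part (ii), however, your route diverges from the paper's and contains a genuine gap. You propose to deduce continuity of $\bar M$ from part (i) and then pass to the limit in the quadratic variation $\langle M^{\varepsilon}\rangle_t=\int_0^t|Z^{\varepsilon}_s|^2ds$ so as to represent $\bar M$ as $\int_0^\cdot Z_s\,d\bar B_s$. Neither step is available at this stage: part (i) holds only on the co-countable set $[0,T]\setminus D$ and $\bar Y$ is a priori merely c\`adl\`ag (its continuity is established only afterwards, in Proposition \ref{Prop:identification of M}, by comparison with the unique solution of the limit BSDE); and quadratic variation is not continuous for the S-topology or the Meyer--Zheng pseudo-path topology, so nothing forces $\langle M^{\varepsilon}\rangle$ to converge to $\langle \bar M\rangle$ --- you have no control on $Z^{\varepsilon}$ beyond the uniform $L^2$ bound. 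Moreover, showing that $\bar M$ is a martingale for its own filtration would not suffice: the claim concerns the larger filtration $\mathcal H_t=\mathcal F^{\bar B}_t\vee\mathcal F^{\bar\alpha}_T$, which carries the whole path of $\bar\alpha$.

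The paper avoids all of this by a direct test-function argument. For bounded continuous $\Phi_{t_1}$ of $(B,Y^{\varepsilon},\bar\alpha^{\varepsilon})$ (with $B,Y^{\varepsilon}$ restricted to $[0,t_1]$), the $\mathcal G^{\varepsilon}_t$-martingale property of $M^{\varepsilon}$ yields
\[
E\Big[\Phi_{t_1}(B,Y^{\varepsilon},\bar\alpha^{\varepsilon})\int_0^{\delta}\big(M^{\varepsilon}_{t_2+r}-M^{\varepsilon}_{t_1+r}\big)\,dr\Big]=0 .
\]
The time-average over $[0,\delta]$ is precisely the device that compensates for the weakness of the S-topology: it turns the increment into a functional for which one can pass to the limit using finite-dimensional convergence on $D^c$ together with uniform integrability, giving the same identity for $(\bar B,\bar Y,\bar\alpha,\bar M)$. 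Dividing by $\delta$, letting $\delta\downarrow0$, and using right-continuity produces $E[\Phi_{t_1}(\bar B,\bar Y,\bar\alpha)(\bar M_{t_2}-\bar M_{t_1})]=0$, which is exactly the $\mathcal H_t$-martingale property. No stochastic-integral representation of $\bar M$ is needed here; that identification is carried out only afterwards, in Proposition \ref{Prop:identification of M}.
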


\begin{proof}
              From Proposition \ref{prop: convergence of Markov chain}, as $ \varepsilon \rightarrow 0$,
        \begin{equation*}
                      \begin{split}
                                                                & \sup_{0\leq t\leq T}
                                                                  E  \left|  \int_0^t                  f(s,Y^{\varepsilon}_s,s_{ij})
                                  \left(I_{\{\alpha_s^{\varepsilon}=s_{ij}\}}-\nu_j^iI_{\{\alpha_s^{\varepsilon} \in
                                                                 {\cal{M}}_i\}}\right)ds\right|
                                                \rightarrow 0.
   \end{split}
        \end{equation*}
  Since $(Y^{\varepsilon},\bar{\alpha}^{\varepsilon})$ converge weakly to $(\bar{Y},\bar{\alpha})$,
     \begin{equation*}
                      \begin{split}
                      &\int_0^{t}\bar{f}(s,Y^{\varepsilon}_s,\bar{\alpha}^{\varepsilon}_s)ds
                             \textrm{ converges in distribution to}\int_0^{t}\bar{f}(s,\bar{Y}_s,\bar{\alpha}_s)ds
                                         \textrm{ on }  C(0,T;R^k).
                                                    \end{split}
                                               \end{equation*}
                   Thus \begin{equation*}
                                                   \begin{split}
                                                        &\int_0^t    f(s,Y^{\varepsilon}_s,\alpha^{\varepsilon}_s)ds\\
                                                      = &\int_0^t \sum_{i=1}^l \sum_{j=1}^{m_i} f(s,
                                                        Y^{\varepsilon}_s,s_{ij})I_{\{\alpha_s^{\varepsilon}=s_{ij}\}}\\
                                                      = & \int_0^t \sum_{i=1}^l
                                                      \sum_{j=1}^{m_i} f(s, Y^{\varepsilon}_s,s_{ij})\left(I_{\{\alpha_s^{\varepsilon}=s_{ij}\}}-\nu_j^iI_{\{\alpha_s^{\varepsilon} \in
                                                      {\cal{M}}_i\}}\right)ds+
                                                      \int_0^t
                                                      \bar{f}(s,Y^{\varepsilon}_s,\bar{\alpha}^{\varepsilon}_s)ds.
                                                       \end{split}
                                               \end{equation*}
                                                As $\varepsilon \rightarrow 0$, passing to the
                                              limit in the backward
                                              component of the
                                              BSDE $(\ref{eq: BSDE with perturbed in M
                                              form})$, we can derive
                                              assertion (i).

%
Now, we prove assertion  (ii).

 For any $0\leq t_1\leq t_2\leq T$, $\Phi_{t_1}$ is a continuous
 mapping from $C(0,t_1;R^d) \times D(0,t_1;R^k) \times D(0,T;
 \bar{{\cal{M}}})$.
 $\forall  \varepsilon >0$, since $M^{\varepsilon}$ is a martingale with respect to $\mathcal{G}_t^{\varepsilon}=\mathcal{F}_T^{\alpha^{\varepsilon}}\vee \mathcal{F}_t^B$,
 $Y^{\varepsilon}$ and $\bar{\alpha}^{\varepsilon}$ are $\mathcal{G}_t^{\varepsilon}$-adapted, we know
$$
  E\left(\Phi_{t_1}(B,Y^{\varepsilon},  \bar{\alpha}^{\varepsilon})\left(Y^{\varepsilon}_{t_2}-Y^{\varepsilon}_{t_1}+\int_{t_1}^{t_2}
  f(s,Y_s^{\varepsilon},\alpha_s^{\varepsilon})ds\right)\right)=0
 $$
and
 $$
  E\left(\Phi_{t_1}(B,Y^{\varepsilon},
  \bar{\alpha}^{\varepsilon})\int_0^{\delta}(M_{t_2+r}^{\varepsilon}-M_{t_1+r}^{\varepsilon})dr\right)=0,
 $$
 here $B$ is the Brownian motion.

 From the weak convergence of $(Y^{\varepsilon},
 \bar{\alpha}^{\varepsilon})$  to $(\bar{Y},\bar{\alpha})$,
    $\int_0^{t}\bar{f}(s,Y^{\varepsilon}_s,\bar{\alpha}^{\varepsilon}_s)ds$  converges in distribution to
    $\int_0^{t}\bar{f}(s,\bar{Y}_s,\bar{\alpha}_s)ds$
on  $C(0,T;R^k)$. For a $d$-dimensional Brownian motion $\bar{B}=\{\bar{B}_t;0\leq t\leq T\}$ with $\bar{B}_0=0$, from the fact that
  $\bar{B}$ has the same probability distribution
with $B$   and $\displaystyle E(\sup_{0\leq t\leq
T}|M_{t}^{\varepsilon}|^2)\leq
 C$, we obtain
$$
  E\left(\Phi_{t_1}(\bar{B},\bar{Y},\bar{\alpha})\left(\bar{Y}_{t_2}-\bar{Y}_{t_1}+\int_{t_1}^{t_2}\bar{f}(s,\bar{Y}_s,\bar{\alpha}_s)ds\right)\right)=0
 $$
and
 $$
  E\left(\Phi_{t_1}(\bar{B},\bar{Y},
  \bar{\alpha})\int_0^{\delta}(\bar{M}_{t_2+r}-\bar{M}_{t_1+r})dr\right)=0.
 $$
 Dividing the second identity by $\delta$, letting $\delta \rightarrow
 0$, and exploiting the right continuity, we obtain that
 $$
   E\left(\Phi_{t_1}(\bar{B}, \bar{Y},      \bar{\alpha})(\bar{M}_{t_2}-\bar{M}_{t_1})\right)=0.
 $$
 From the freedom choice of $t_1$, $t_2$, and $\Phi_{t_1}$, we deduce
 that $\bar{M}$ is a ${\cal{H}}_t$-martingale.
\end{proof}
 \begin{prop}\label{Prop:identification of M}
              Let $\{(Y_t,Z_t); 0\leq t\leq T\}$ be the unique solution
              of BSDE $(\ref{eq:limit BSDE })$, then  $ \forall  t \in
              [0,T]$,
              $$
                E|Y_t-\bar{Y}_t|^2+E\left([\bar{M}-\int_0^{\cdot}Z_rd\bar{B}_r]_T-[\bar{M}-
                \int_0^{\cdot}Z_rd\bar{B}_r]_t\right)=0.
              $$
 \end{prop}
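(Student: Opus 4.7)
The plan is to run the standard BSDE uniqueness argument, comparing the limit process $(\bar Y,\bar M)$ from Proposition \ref{Prop: Limit of BSDE with Y,M} against the genuine solution $(Y,Z)$ of the averaged BSDE \eqref{eq:limit BSDE }. Introduce
$$ N_t \;=\; \bar M_t - \int_0^t Z_r\,d\bar B_r, \qquad 0\le t\le T. $$
By Proposition \ref{Prop: Limit of BSDE with Y,M}(ii), $\bar M$ is an $\mathcal H_t$-martingale, and since $\bar B$ is an $\mathcal H_t$-Brownian motion with $Z\in M^2(0,T;R^{k\times d})$ adapted to $\mathcal H_t$, the process $N$ is a square-integrable $\mathcal H_t$-martingale. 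Subtracting the two backward equations gives, for $t\in[0,T]\setminus D$,
$$ Y_t-\bar Y_t \;=\; \int_t^T\bigl[\bar f(s,Y_s,\bar\alpha_s)-\bar f(s,\bar Y_s,\bar\alpha_s)\bigr]ds \,+\, (N_T-N_t), $$
with terminal value $Y_T-\bar Y_T=\xi-\xi=0$.

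Next I would apply Itô's formula for càdlàg semimartingales to $|Y_t-\bar Y_t|^2$. Since $Y$ is continuous while $\bar Y$ is càdlàg, one checks $[Y-\bar Y]_t=[N]_t$. Integrating from $t$ to $T$ yields
$$ |Y_t-\bar Y_t|^2 + ([N]_T-[N]_t) \;=\; 2\int_t^T (Y_{s-}-\bar Y_{s-})\bigl[\bar f(s,Y_s,\bar\alpha_s)-\bar f(s,\bar Y_s,\bar\alpha_s)\bigr]ds \,+\, 2\int_t^T (Y_{s-}-\bar Y_{s-})\,dN_s. $$
Taking expectation, the stochastic integral against $N$ is a true martingale (by Burkholder–Davis–Gundy together with the $L^2$ bounds on $Y,\bar Y,N$ inherited from Proposition \ref{Prop: boundness of Y M} and weak lower-semicontinuity of $L^2$ norms), so its expectation vanishes. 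Using the Lipschitz property of $\bar f$ in $y$ (inherited from $f$, with the same constant $\mu$), I get
$$ E|Y_t-\bar Y_t|^2 + E\bigl([N]_T-[N]_t\bigr) \;\le\; 2\mu\int_t^T E|Y_s-\bar Y_s|^2\,ds. $$
Dropping the nonnegative quadratic-variation term and applying the backward Gronwall lemma forces $E|Y_t-\bar Y_t|^2=0$, and plugging this back gives $E([N]_T-[N]_t)=0$, which is exactly the claimed identity.

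For $t\in D$ one extends the identity by right-continuity of $\bar Y$ and $[N]$ together with continuity of $Y$, since $D$ is countable. The main obstacle I anticipate is the Itô-formula step: one must handle a \emph{general} càdlàg martingale $\bar M$ (not a Brownian integral), verify that $[Y-\bar Y]=[N]$ without any cross term coming from the finite-variation piece, and in particular argue that $\int (Y_{-}-\bar Y_{-})\,dN$ is a genuine martingale so that its expectation drops out; everything else is a routine Gronwall estimate.
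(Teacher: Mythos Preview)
Your proposal is correct and follows exactly the same route as the paper: apply It\^o's formula to $|Y_t-\bar Y_t|^2$, identify the quadratic-variation term as $[M-\bar M]$ (your $[N]$), use the Lipschitz property of $\bar f$ and Gronwall's lemma to force $E|Y_t-\bar Y_t|^2=0$ on $[0,T]\setminus D$, then extend. In fact your write-up is more careful than the paper's, which simply invokes ``It\^o's formula and Proposition~\ref{Prop: Limit of BSDE with Y,M}'' without discussing the c\`adl\`ag issues or the martingale property of the stochastic integral against $N$.
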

\begin{proof}
Let $M_t=\int_0^{t}Z_rd\bar{B}_r$,  by the proof of Proposition
$\ref{Prop:f independent of y and z}$, we know that $M_t$ is a
${\cal{F}}_t^{\bar{B}}\vee {\cal{F}}_T^{\bar{\alpha}}$-martingale.

From It\^o's formula and Proposition $\ref{Prop: Limit of BSDE with
Y,M}$, we know that
 \begin{equation*}
       \begin{split}
         &E|Y_t-\bar{Y}_t|^2+E\left([M-\bar{M}]_T-[M-\bar{M}]_t\right)\\
         =\
        &2E\int_t^T\left(\bar{f}(s,Y_s,\bar{\alpha}_s)-\bar{f}(s,\bar{Y}_s,\bar{\alpha}_s)\right)(Y_s-\bar{Y}_s)ds\\
        \leq\ &CE\int_t^T|Y_s-\bar{Y}_s|^2ds.
         \end{split}
  \end{equation*}
  From Gronwall's lemma, we obtain $E|Y_t-\bar{Y}_t|^2=0$, $\forall  t \in
  [0,T]-D$, and the result follows.
  \end{proof}

\vspace{5mm}

\noindent {\bf We come back to finish the Proof of Theorem 3.1}:

   Since $Y$ is continuous, $\bar{Y}$ is c$\grave {a}$dl$\grave{a}$g,    and $D$ is
  countable,    we get $Y_t=\bar{Y}_t$, $P-$a.s., $\forall  t \in
  [0,T]$. Moreover, we can deduce that $M\equiv \bar{M}$. Hence, we
  get the result  that the sequence
  $(Y^{\varepsilon}_t, \int_0^t Z^{\varepsilon}_sdB_s)$ converges in distribution to the process $(Y_t, \int_0^t
  Z_sd\bar{B}_s)$, and the proof of  Theorem \ref{Thm: weak convergence of Y  Z} is
  completed. \qed

\subsection{Examples}

\begin{ex}
          Consider the case that $\tilde{Q}$ is weakly irreducible with the state space $\mathcal{M}=\{1,\cdots,m\}$ and $\nu =(\nu _1,\cdots,\nu _m)$ is the quasi stationary    distribution, then $ \alpha^{\varepsilon}$ can be considered as a fast-varying noise process. As shown in the following, the noise is averaged out with respect to the quasi stationary distribution. In this case, the corresponding BSDE is
          \begin{equation} \label{eq:BSDE a in simple case}
                           Y^{\varepsilon}_t=\xi+\int_t^Tf(s,Y^{\varepsilon}_s,
                           \alpha^{\varepsilon}_s)ds-\int_t^TZ^{\varepsilon}_sdB_s.
          \end{equation}
          Under  Assumption $\ref{assp:f of BSDE}$ and Assumption $\ref{assp2:f of BSDE
  }$, from Theorem $\ref{Thm: weak convergence of Y Z}$,  as $\varepsilon \rightarrow      0$, the sequence of
          process $(Y^{\varepsilon}_t,$
          $\int_0^t Z^{\varepsilon}_sdB_s)$ converges  in distribution to the    process $(Y_t, \int_0^t Z_sd\bar{B}_s)$, where     $(Y,Z)$ is the unique solution to the following   BSDE
                \begin{equation} \label{eq:limit BSDE in simple case }
                                Y_t=\xi+\int_t^T \sum_{i=1}^m \nu_i f(s, Y_s,
                                i)ds-\int_t^TZ_sd\bar{B}_s.
                \end{equation}
           It is noted that the generator of BSDE ($\ref{eq:limit BSDE in simple case }$) depends on  the quasi stationary distribution of the Markov chain.
           Thus we can adopt the distribution of  a $\mathcal{F}_t^{\bar{B}}$-adapted process $Y$, the solution of BSDE \eqref{eq:limit BSDE in simple case }, as the asymptotic distribution for the solution of $\mathcal{F}_t^B \vee
           \mathcal{F}^{\alpha^{\varepsilon}}_{t,T}$-adapted process $Y^{\varepsilon}$.
 \end{ex}
In practical systems, the small parameter  $\varepsilon$ is just a fixed parameter and it separates
  different scales in the sense of order of magnitude in the  generator. It does not need to tend to 0.
  We give a detailed example for interpretation.
\begin{ex}
              Suppose the generator of the continuous-time Markov chain  affected BSDE \eqref{eq:BSDE} is
              $Q=\begin{pmatrix} {-22}&{20}&{2}\\ {41}&{-42}&{1}\\ {1}&{2}&{-3}\end{pmatrix}$,         and the corresponding state space is $\mathcal{M}=\{s_1,s_2,s_3\}$. It is obvious that the transition rate between $s_1$ and $s_2$ is larger than the transition rate
              between $s_3$ and other states, i.e., the jumps between $s_1$ and $s_2$ are more frequent than jumps between $s_3$ and other states. We can rewrite $Q$ as following
              $$\displaystyle Q=\frac{1}{0.05}\tilde{Q}+\hat{Q} =\frac{1}{0.05} \begin{pmatrix}
              {-1}&{1}&{0}\\ {2}&{-2}&{0}\\ {0}&{0}&{0}\end{pmatrix}+\begin{pmatrix} {-2}&{0}&{2}\\ {1}&{-2}&{1}\\ {1}&{2}&{-3}\end{pmatrix}$$
              It is noted that we choose suitable $\varepsilon$ to guarantee that $\tilde{Q}$
              and $\hat{Q}$ to be the generator with the same order of magnitude.

              Now, we introduce the continuous-time $\varepsilon $-dependent singularly perturbed Markov chain $\alpha^{\varepsilon}=\{\alpha^{\varepsilon}_t;0\leq t\leq T\}$ which
               have     the generator
                $\displaystyle  Q^{\varepsilon}=\frac{1}{\varepsilon}\tilde{Q}+\hat{Q}=\frac{1}{\varepsilon}
                \begin{pmatrix}             {-1}&{1}&{0}\\ {2}&{-2}&{0}\\ {0}&{0}&{0}\end{pmatrix}+\begin{pmatrix} {-2}&{0}&{2}\\ {1}&{-2}&{1}\\ {1}&{2}&{-3}
                \end{pmatrix},$  and define the aggregated process
              $$\bar{\alpha}^{\varepsilon}=\{\bar{\alpha}^{\varepsilon}_{t};0\leq t\leq T\}=\left \{ \begin{array}{ll}
                                                                                                                    1, \  \alpha^{\varepsilon}_t \in \{s_1, s_2\} \\
                                                                                                                    2, \  \alpha^{\varepsilon}_t \in \{s_3\}
                                                                                                     \end{array}
                                                                                            \right
                                                                                            .$$
               Proposition \ref{prop: converges of aggregated process} yields that $\bar{\alpha}^{\varepsilon}$ converges in distribution to a continuous-time Markov chain $\bar{\alpha}$
               generated by $\displaystyle \bar{Q}=\begin{pmatrix} -\frac{5}{3} & \frac{5}{3} \\ {3} & -{3} \end{pmatrix}.$ By Theorem \ref{Thm: weak convergence of Y Z}, we can
               adopt the        probability distribution of the     solution to the following BSDE \begin{equation*}
                                                            Y_t=\xi+\int_t^T\bar{f}(s,Y_s,
                                                            \bar{\alpha}_s)ds-\int_t^TZ_sd\bar{B}_s
                                          \end{equation*}
               as an asymptotic   probability distribution of the solution to the original BSDE.
               Here $\displaystyle \bar{f}(t,y,1)=\frac{2}{3}f(t,y,s_1)+\frac{1}{3}f(t,y,s_2)$ and $ \bar{f}(t,y,2)=f(t,y,s_3).$

               Since the limit averaged Markov chain has two states and the original one has three states,
               we have reduced the complexity of the model. This advantage will be more clear
                when the state space of the original Markov chain is sufficiently larger.
\end{ex}

\section{Homogenization of One System of PDEs} \label{section: homogenization
of SPDEs}

As an application of our results in previous section, we show the homogenization of a sequence of semi-linear backward
PDE with a singularly perturbed Markov chain. In this section, after showing the relation between BSDEs with Markov chain  and one system of semi-linear PDE with Markov chain, we derive the homogenization property of backward
PDE with a singularly perturbed Markov chain based on the weak
convergence of the associated BSDE.

Here, we give some notations as follows: $C^k(R^p;R^q)$ is the space
of functions of class $C^k$ from $R^p$ to $R^q$,
$C^k_{l,b}(R^p;R^q)$ is the space of functions of class $C^k$ whose
partial derivatives of order less than or equal to $k$ are bounded,
and $C^k_p(R^p;R^q)$ is the space of functions of class $C^k$ which,
together with all their partial derivatives of order less than or
equal to $k$, grow at most like a polynomial function of the
variable $x$ at infinity.


\subsection{Relation between BSDEs  with Markov chains and  semi-linear
PDEs systems with Markov chains}

For $t \in [0,T]$, consider the following semi-linear backward PDE with a Markov chain:
 \begin{equation}\label{Eq:SPDEs }
          u(t,x)=h(x)+\int_t^T\left(\mathcal{L}u(r,x)+f(r,x,u(r,x),(\nabla u
\sigma)(r, x),\alpha_r)\right)dr,
  \end{equation}
  here $u: [0,T]\times R^m\rightarrow R^k$, and $\displaystyle
\mathcal{L}u=\left(Lu_1,\cdots,Lu_k \right)'$, with $\allowdisplaybreaks \displaystyle
L=\frac 1 2 \sum_{i,j=1}^m(\sigma \sigma')_{ij}$ $\displaystyle (t,x)
\frac{\partial^2}{\partial x_i \partial x_j}+ \sum_{i=1}^m
b_i(t,x)\frac{\partial}{\partial x_i}.$

Firstly, we make the following assumption:
\begin{assp} \label{assp:coefficient of FBSDE}
 $b \in C^3_{l,b}(R^m;R^m)$, $\sigma \in C^3_{l,b}(R^m;R^{m\times
 d})$, $h \in C_p^3(R^m;R^k)$.
 For $f: [0,T]\times R^m \times R^k \times R^{k \times d} \times \mathcal{M} \rightarrow
 R^k$, $ \forall  s \in [0,T]$, $i \in \mathcal{M}$,  $(x,y,z)\rightarrow
 f(s,x,y,z,i)$ is of class $C^3$.

 Moreover, $f(s,\cdot,0,0,i) \in
 C^3_{p}(R^m;R^k)$, and the first order partial derivatives in $y$
 and $z$ are bounded on $[0,T]\times R^m \times R^k \times R^{k \times
 d} \times \mathcal{M}$, as well as their derivatives of order one and two with respect
 to $x$, $y$, $z$.
 \end{assp}

  \begin{defi}
      A classical solution of PDE \eqref{Eq:SPDEs }
      is a $R^k$-valued stochastic process $\{u(t,x);$
      $0\leq t\leq T,x \in
      R^m\}$ which is in $C^{0,2}([0,T]\times R^m;R^k)$ and satisfies that  $u(t,x)$
      is $\mathcal{F}_{t,T}$-measurable.
\end{defi}

  $ \forall t \in [0,T]$, $x \in R^m$, we introduce the
following FBSDE  with a Markov chain on $[t, T]$:
\begin{equation}\label{Eq:SDE of FBSDE}
          X_s^{t,x}=x+\int_t^sb(X_r^{t,x})dr+\int_t^s\sigma(X_r^{t,x})dB_r,
  \end{equation}
   \begin{equation}\label{Eq:BSDE of FBSDE}
                Y_s^{t,x}=h(X_T^{t,x})+\int_s^Tf(r,X_r^{t,x},Y_r^{t,x},Z_r^{t,x},\alpha_r)dr-\int_s^TZ_r^{t,x}dB_r.
 \end{equation}
The aim of this subsection  is to show that, under above
assumptions, the FBSDE (\ref{Eq:SDE of FBSDE})-(\ref{Eq:BSDE of
FBSDE}) provides both a probabilistic representation  and the unique
classical solution for PDE (\ref{Eq:SPDEs }).

For  SDE (\ref{Eq:SDE of FBSDE}),    it is well known
that under Assumption \ref{assp:coefficient of FBSDE},
 it has a unique solution $\{ X_s^{t,x};t\leq s\leq
 T\}$ which has a version that is a.s. of class $C^2$ in $x$,
 the function and its derivatives are a.s. jointly continuous in
 $(t,s,x)$. Moreover,
 $$\sup_{t\leq s\leq T}\left(|X_s^{t,x}|+|\nabla X_s^{t,x}|+|D^2X_s^{t,x}|\right) \in \bigcap_{p\geq 1}L^p(R), \forall  (t,x) \in [0,T] \times R^m $$
 where $\nabla X_s^{t,x}$, $D^2X_s^{t,x}$  denote respectively the matrix of
 first order and second order derivatives of $X_s^{t,x}$ with
 respect to $x$.

 For  BSDE (\ref{Eq:BSDE of FBSDE}), denote $\tilde{f}(s,y,z,i)=f(s,X^{t,x}_s,y,z,i)$, $\forall  i \in \mathcal{M}$,
  we know that $\tilde{f}$ satisfies Assumption \ref{assp:f of BSDE} since $f$ satisfies  Assumption \ref{assp:coefficient of
 FBSDE}. So there exists a unique  solution pair  $\{(Y_s^{t,x},Z_s^{t,x});t\leq s\leq
 T\}$ to BSDE (\ref{Eq:BSDE of FBSDE}).

 Define $X_s^{t,x}=X_{s\vee t}^{t,x}$, $Y_s^{t,x}=Y_{s\vee
t}^{t,x}$, and $Z_s^{t,x}=0$, for $s \leq t$. Then
$(X,Y,Z)=(X_s^{t,x},Y_s^{t,x},Z_s^{t,x})$ is defined on $(s,t) \in
[0,T]^2$.

\begin{thm} \label{Thm: probalistic of soulution of PDE}
      Under Assumption $\ref{assp:coefficient of FBSDE}$, let $\{u(t,x);0\leq t\leq T,x\in R^m\}$
      be
      a classical solution of PDE $(\ref{Eq:SPDEs })$. Suppose that there exists a constant $C$ such
      that,
      \begin{equation} \label{in Thm: probalistic of soulution of PDE}
      |u(t,x)|+|\partial_xu(t,x)\sigma(t,x)|\leq C(1+|x|), \quad \forall  (t,x) \in [0,T]\times R^m,
      \end{equation}  then $
      (Y_s^{t,x}=u(t,X_s^{t,x}), Z_s^{t,x}=\partial_xu(t,X_s^{t,x})\sigma(t,X_s^{t,x});t\leq s\leq T)$ is the unique solution of
      BSDE $(\ref{Eq:BSDE of FBSDE})$. Here $(X_s^{t,x}; t\leq s\leq
      T)$ is the solution to SDE $(\ref{Eq:SDE of FBSDE})$.
\end{thm}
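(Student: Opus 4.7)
The plan is to verify directly that $Y_s := u(s,X_s^{t,x})$ and $Z_s := (\partial_x u\,\sigma)(s,X_s^{t,x})$ (reading the statement's $u(t,X_s^{t,x})$ as $u(s,X_s^{t,x})$, i.e.\ with $t$ replaced by the running time $s$) solve BSDE~(\ref{Eq:BSDE of FBSDE}); uniqueness will then follow from Theorem~\ref{Thm: existence of the solution of BSDE} applied to the generator $\tilde f(r,y,z,i):=f(r,X_r^{t,x},y,z,i)$, which inherits Assumption~\ref{assp:f of BSDE} from Assumption~\ref{assp:coefficient of FBSDE}.

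First I would rewrite the backward PDE (\ref{Eq:SPDEs }) in its differential form. Because $u$ is classical and the integrand in (\ref{Eq:SPDEs }) is jointly continuous in $r$ off the (a.s.\ finite) jump set of $\alpha$, the map $t\mapsto u(t,x)$ is a.s.\ absolutely continuous and
\[
\partial_t u(t,x) \;=\; -\mathcal{L}u(t,x)\;-\;f\bigl(t,x,u(t,x),(\nabla u\,\sigma)(t,x),\alpha_t\bigr)
\]
for a.e.\ $t\in[0,T]$, with $u(T,x)=h(x)$. The critical subtlety is that $u(t,\cdot)$ is $\mathcal{F}_{t,T}^{\alpha}$-measurable, so $u$ is a \emph{random} function and the classical deterministic Itô formula does not apply out of the box.

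To apply Itô's formula to $u(s,X_s^{t,x})$, I would condition on $\mathcal{F}_T^{\alpha}$, or equivalently work piecewise between the (a.s.\ finitely many) jump times of $\alpha$ on $[t,T]$. On each such sub-interval $u$ becomes a deterministic $C^{1,2}$ function; since $B$ is independent of $\alpha$, $X^{t,x}$ remains a standard Itô diffusion under the conditional law, so the classical Itô formula applies. Substituting the PDE identity above collapses the $ds$-terms and gives
\begin{align*}
u(s,X_s^{t,x}) \;=\;& h(X_T^{t,x}) + \int_s^T f\bigl(r,X_r^{t,x},u(r,X_r^{t,x}),(\nabla u\,\sigma)(r,X_r^{t,x}),\alpha_r\bigr)\,dr\\
& - \int_s^T (\nabla u\,\sigma)(r,X_r^{t,x})\,dB_r
\end{align*}
on each sub-interval; continuity of $u(\cdot,x)$ (from (\ref{Eq:SPDEs })) then allows splicing across the jump times to obtain the identity on all of $[t,T]$.

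It remains to check integrability and conclude. The linear growth bound~(\ref{in Thm: probalistic of soulution of PDE}), combined with the standard $L^p$ estimates $E\sup_{t\le s\le T}|X_s^{t,x}|^p<\infty$ for (\ref{Eq:SDE of FBSDE}), puts $(Y,Z)$ in $S^2(0,T;R^k)\times M^2(0,T;R^{k\times d})$. Hence $(Y,Z)$ solves BSDE~(\ref{Eq:BSDE of FBSDE}), and the uniqueness statement in Theorem~\ref{Thm: existence of the solution of BSDE} identifies it as \emph{the} solution. The main obstacle throughout is justifying Itô's formula with a random, $\mathcal{F}_{t,T}^{\alpha}$-measurable $u$; this is precisely where the independence of $B$ from $\alpha$ and the piecewise-constant structure of the Markov chain are essential.
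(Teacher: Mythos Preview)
Your argument is correct, but the paper takes a different route. Instead of differentiating the integral PDE and applying the full It\^o formula to $u(s,X_s^{t,x})$ after conditioning on $\mathcal{F}_T^{\alpha}$, the paper uses a time-discretization: it partitions $[s,T]$ as $s=t_0<\dots<t_n=T$, telescopes $u(s,X_s^{t,x})-h(X_T^{t,x})$, and on each sub-interval splits the increment into a purely spatial part $u(t_i,X_{t_i}^{t,x})-u(t_i,X_{t_{i+1}}^{t,x})$ (handled by It\^o's formula with frozen time $t_i$) and a purely temporal part $u(t_i,X_{t_{i+1}}^{t,x})-u(t_{i+1},X_{t_{i+1}}^{t,x})$ (handled directly by the integral PDE (\ref{Eq:SPDEs }) with frozen space point $X_{t_{i+1}}^{t,x}$); it then sends the mesh to~$0$. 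This avoids ever writing $\partial_t u$, so the paper never needs to argue that $u$ is $C^1$ in time between jumps of $\alpha$. Your approach is more streamlined and---unlike the paper's---makes explicit the key point that the randomness of $u$ is $\mathcal{F}_T^{\alpha}$-measurable and hence independent of $B$; the paper silently uses this when it applies It\^o's formula to the (random) function $u(t_i,\cdot)$, so in a sense you have supplied a justification the paper glosses over. Either argument yields the same integrability and uniqueness conclusions.
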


\noindent {{\bf Proof:}} $\forall  t \leq s \leq T$, let $s=t_0<
t_1< t_2< \cdots < t_n=T$, with It\^o's formula and PDE
$(\ref{Eq:SPDEs })$, we get
 \begin{equation*}
                \begin{split}
                & Y_s^{t,x}-h(X_T^{t,x})\\
                               =\ & u(s,x)-u(T,X_T^{t,x})\\
                =\
                &
                \sum_{i=0}^{n-1}\left(u(t_i,X_{t_i}^{t,x})-u(t_{i+1},X_{t_{i+1}}^{t,x})\right)\\
                = \ &   \sum_{i=0}^{n-1}\left(u(t_i,X_{t_i}^{t,x})-u(t_{i},X_{t_{i+1}}^{t,x})\right)+\sum_{i=0}^{n-1}\left(
                u(t_{i},X_{t_{i+1}}^{t,x})-u(t_{i+1},X_{t_{i+1}}^{t,x})\right) \end{split}
 \end{equation*}
  \begin{equation*}
                \begin{split} = \ & \sum_{i=0}^{n-1} \bigg( -\int_{t_i}^{t_{i+1}} \left(\mathcal{L}u(t_i,X_{s}^{t,x})ds-(\nabla u
                  \sigma)(t_i, X_s^{t,x})dB_s\right) \\
                  &+ \int_{t_i}^{t_{i+1}}\left(\mathcal{L}u(s,X_{t_{i+1}}^{t,x})+f(s,X_{t_{i+1}}^{t,x},u(r,X_{t_{i+1}}^{t,x}),(\nabla u
                  \sigma)(s, X_{t_{i+1}}^{t,x}),\alpha_s)\right)ds\bigg).
                \end{split}
 \end{equation*}
\eqref{in Thm: probalistic of soulution of PDE} yields that $$ E\left(\sup_{t\leq s\leq
T}|u(t,X_s^{t,x})|^2+\int_t^T|\partial_xu \sigma(s,X_s^{t,x})|^2ds\right)
< \infty,$$ and the adaptability is obvious.
 The result is followed as
$\displaystyle \triangle=\sup_{0\leq i\leq n-1} |t_{i+1}-t_i|
\rightarrow 0$. \qed

 Now we deduce the converse side of Theorem \ref{Thm:
probalistic of soulution of PDE}.

\begin{thm} \label{Thm: converse of probalistic of soulution of PDE}
    Assume that        for some
             $p>2$, $ E|\xi|^p+E\int_0^T|\tilde{f}(t,0,0,\alpha_t)|^pdt <
             \infty,$   let $b, \sigma, f, h, \alpha $ satisfy
      Assumption $\ref{assp:coefficient of FBSDE}$,
      then the process $\{u(t,x)=Y_t^{t,x}; 0 \leq t\leq T, x \in R^m\}$ is the unique classical solution to PDE $(\ref{Eq:SPDEs })$.
\end{thm}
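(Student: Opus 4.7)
The plan is to define $u(t,x) := Y_t^{t,x}$ and verify it is a classical solution of PDE~\eqref{Eq:SPDEs }; uniqueness then follows directly from Theorem~\ref{Thm: probalistic of soulution of PDE} applied to any two candidate solutions satisfying the growth bound~\eqref{in Thm: probalistic of soulution of PDE}, which is itself obtained from the a priori estimates of Step~1. Adaptedness of $u(t,x)$ to $\mathcal{F}_{t,T}$ is immediate from the explicit construction of $(Y^{t,x}, Z^{t,x})$ in Section~\ref{section:BSDE}, once we note that the BSDE on $[t,T]$ depends only on the future of the Markov chain and of the Brownian motion after time $t$.

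The first substantive step is to establish $u \in C^{0,2}([0,T]\times R^m;R^k)$. Under Assumption~\ref{assp:coefficient of FBSDE}, Kunita's theory of stochastic flows yields that $x \mapsto X_s^{t,x}$ is $C^2$ with derivatives $\nabla X_s^{t,x}$ and $D_x^2 X_s^{t,x}$ having moments of every order. Differentiating the BSDE~\eqref{Eq:BSDE of FBSDE} formally in $x$ produces linear BSDEs for $\nabla_x Y^{t,x}$ and $D_x^2 Y^{t,x}$, with coefficients given by the bounded partial derivatives of $f$ paired with the derivatives of $X$. The stronger integrability hypothesis $p > 2$ is used precisely to apply $L^p$-a priori estimates (based on the Burkholder-Davis-Gundy inequality combined with a Gronwall-type argument) to these linear BSDEs, thereby identifying the resulting processes as the genuine classical derivatives of $(Y, Z)$ in $x$. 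Joint continuity in $(t,x)$ of $u, \nabla u, D_x^2 u$ follows from standard continuity-with-respect-to-initial-data estimates for FBSDEs.

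Next, the flow property of SDE~\eqref{Eq:SDE of FBSDE}, together with the uniqueness assertion of Theorem~\ref{Thm: existence of the solution of BSDE} applied to the BSDE on $[s,T]$, gives the Markov-type identity $Y_s^{t,x} = u(s, X_s^{t,x})$ for $t \le s \le T$, and consequently $Z_s^{t,x} = (\nabla u\,\sigma)(s, X_s^{t,x})$ by the spatial regularity established in Step~1. With these identities in hand, I fix $s \in (t,T]$ and apply the classical Ito formula in space only to the $C^2$ function $y \mapsto u(s,y)$ along $\{X_r^{t,x}\}_{r \in [t,s]}$; combining the resulting identity with the expression for $u(t,x) - u(s, X_s^{t,x})$ obtained from the BSDE isolates the contribution from the time variable. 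Dividing by $s-t$, conditioning on $\{X_t^{t,x}=x\}$ (so that the stochastic integrals have zero mean), and passing to the limit $s \downarrow t$ using the continuity estimates of Step~1, one obtains the infinitesimal relation $-\partial_t u = \mathcal{L}u + f$ pointwise; re-integrating against the terminal condition $u(T,x) = h(x)$ yields the integral equation~\eqref{Eq:SPDEs }.

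The main obstacle is the third step: the random field $u(\cdot, x)$ inherits $\mathcal{F}_{s,T}^\alpha$-measurability from $Y$ and is \emph{a priori} only continuous (not $C^1$) in the time variable, because $\alpha$ is piecewise constant càdlàg rather than smooth. Consequently a direct space-time Ito formula is unavailable, and the fix above, applying Ito only in space for fixed $s$ and borrowing the time-increment from the BSDE identity, is essential. The rigorous justification of the limit $s \downarrow t$ and of the exchange of limit and conditional expectation rests on the uniform growth and continuity bounds on $u, \nabla u\,\sigma, \mathcal{L}u$ produced in Step~1, which is where the bulk of the technical work sits.
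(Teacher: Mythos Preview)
Your regularity programme (Step~1) and the flow identity $Y_s^{t,x}=u(s,X_s^{t,x})$ (Step~2) match the paper's Propositions~\ref{Prop:continuous of the solution of BSDE of FBSDE} and~\ref{Prop: representation of Z of BSDE of FBSDE} exactly, and your recognition that It\^o's formula must be applied \emph{in space only} at a frozen time is precisely the key idea shared by both arguments. The divergence is in Step~3. You localize to a single interval $[t,s]$, divide by $s-t$, kill the stochastic integrals by conditioning, and send $s\downarrow t$ to extract a right derivative $\partial_t^+ u$, which you then re-integrate against the terminal condition. The paper instead takes a partition $t=t_0<\cdots<t_n=T$, telescopes $h(x)-u(t,x)=\sum_i\bigl(u(t_{i+1},x)-u(t_i,x)\bigr)$, splits each increment as $\bigl[u(t_{i+1},x)-u(t_{i+1},X_{t_{i+1}}^{t_i,x})\bigr]+\bigl[Y_{t_{i+1}}^{t_i,x}-Y_{t_i}^{t_i,x}\bigr]$, applies spatial It\^o to the first piece and the BSDE to the second, and sends the mesh to zero. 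This produces the \emph{integral} form~\eqref{Eq:SPDEs } directly, with no detour through $\partial_t u$.

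Your route is valid, but the partition argument buys two things. First, it never needs to assert that the time derivative of $u$ exists, nor invoke a fundamental-theorem-of-calculus statement for a function whose one-sided derivative is only right-continuous with jumps at the jump times of $\alpha$; the integral equation simply emerges as the limit of a Riemann-type sum. Second, there is no need to take conditional expectations to dispose of the stochastic integrals (your phrase ``conditioning on $\{X_t^{t,x}=x\}$'' is not quite the right object here; one would condition on $\mathcal{G}_t=\mathcal{F}_t^B\vee\mathcal{F}_T^{\alpha}$): in the paper's argument the martingale part vanishes in the mesh limit because $Z_r^{t_i,x}-(\nabla u\,\sigma)(t_{i+1},X_r^{t_i,x})\to 0$ pointwise by Proposition~\ref{Prop: representation of Z of BSDE of FBSDE}. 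Your approach trades these for a cleaner pointwise infinitesimal statement, at the cost of extra bookkeeping in the re-integration step.
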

As preliminaries for the proof, we  give  two propositions about the
regularity of the solution of BSDE (\ref{Eq:BSDE of FBSDE}) whose
proofs are put in the Appendix.

\begin{prop}\label{Prop:continuous of the solution of BSDE of FBSDE}
Under the assumption of Theorem $\ref{Thm: converse of probalistic
of soulution of PDE}$,
          $\{Y_s^{t,x};(s,t)\in [0,T]^2, x \in R^m\}$ has a version
          whose trajectories belong to $C^{0,0,2}([0,T]^2\times
          R^m)$. Hence $\forall  t \in [0,T]$, $x\rightarrow
          Y_t^{t,x}$ is of class $C^2$ a.s..
\end{prop}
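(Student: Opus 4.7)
The plan is to follow the classical Pardoux--Peng methodology for FBSDE regularity (see \cite{Pardoux1992, Pardoux1999}), adapted to the Markov-chain setting developed in Section \ref{section:BSDE}. The argument has two layers: first, establish joint continuity of $(s,t,x) \mapsto Y_s^{t,x}$ via Kolmogorov's criterion; second, obtain $C^2$-regularity in $x$ by differentiating the FBSDE twice with respect to the initial datum and identifying the resulting linear BSDEs (solvable by Theorem \ref{Thm: existence of the solution of BSDE}) with the genuine derivative processes.

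For joint continuity, I would exploit the hypothesis $p > 2$ to derive $L^p$-estimates of the form
$$E|Y_s^{t,x} - Y_{s'}^{t',x'}|^p \leq C\bigl(|s-s'|^{p/2} + |t-t'|^{p/2} + |x-x'|^p\bigr),$$
combining the standard flow estimates for SDE \eqref{Eq:SDE of FBSDE} (which hold under Assumption \ref{assp:coefficient of FBSDE}) with $L^p$-stability of BSDE solutions under perturbations of the terminal condition, generator and time interval. Kolmogorov's criterion then delivers a version of $(s,t,x) \mapsto Y_s^{t,x}$ that is jointly continuous on $[0,T]^2 \times R^m$.

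For differentiability in $x$, recall that under Assumption \ref{assp:coefficient of FBSDE} the flow $x \mapsto X_s^{t,x}$ is twice continuously differentiable with all derivatives in every $L^p$. Formally differentiating BSDE \eqref{Eq:BSDE of FBSDE} with respect to $x_i$ gives a linear BSDE for $(\nabla_i Y_s^{t,x}, \nabla_i Z_s^{t,x})$,
$$\nabla_i Y_s^{t,x} = h'(X_T^{t,x})\nabla_i X_T^{t,x} + \int_s^T \bigl(\partial_x f\,\nabla_i X_r + \partial_y f\,\nabla_i Y_r + \partial_z f\,\nabla_i Z_r\bigr)dr - \int_s^T \nabla_i Z_r^{t,x} dB_r,$$
with the partial derivatives of $f$ evaluated along $(r,X_r^{t,x},Y_r^{t,x},Z_r^{t,x},\alpha_r)$. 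These coefficients satisfy Assumption \ref{assp:f of BSDE}, so Theorem \ref{Thm: existence of the solution of BSDE} yields a unique solution in $S^2 \times M^2$. A difference-quotient argument, passing to the limit $h \to 0$ in $h^{-1}(Y_s^{t,x+he_i}-Y_s^{t,x})$ by BSDE stability, identifies this solution with $\partial_{x_i}Y_s^{t,x}$. Iterating once more — differentiating the first-variation equation, using that $b,\sigma \in C^3_{l,b}$, $h \in C^3_p$ and $f$ is $C^3$ with bounded first derivatives in $(y,z)$ — produces a second linear BSDE whose unique solution is $\partial^2_{x_ix_j}Y_s^{t,x}$. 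A final application of Kolmogorov's criterion to the triple $(Y,\nabla Y,D^2 Y)$ as a function of $(s,t,x)$ gives a version with trajectories in $C^{0,0,2}([0,T]^2 \times R^m)$; restriction to the diagonal $s=t$ yields the stated corollary.

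The main obstacle is carrying out this Pardoux--Peng scheme in the presence of $\alpha$: the natural filtration $\mathcal{F}_t = \mathcal{F}_t^B \vee \mathcal{F}_{t,T}^\alpha \vee \mathcal{N}$ is not monotone, which is precisely why Proposition \ref{Prop: extension of ito martingale} and the construction in Proposition \ref{Prop:f independent of y and z} were introduced. One must verify that the standard $L^p$ a priori estimates for BSDEs, as well as the identification of difference-quotient limits via BSDE stability, remain valid in this non-Brownian setting. The resolution is to perform all estimates on the enlarged filtration $\mathcal{G}_t = \mathcal{F}_t^B \vee \mathcal{F}_T^\alpha \vee \mathcal{N}$ (on which the extended Itô representation applies) and then descend to $\mathcal{F}_t$-adaptedness by the measurability argument of Proposition \ref{Prop:f independent of y and z}. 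Once this is in place, the remainder of the proof mirrors the classical Brownian-driven case.
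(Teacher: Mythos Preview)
Your proposal is correct and follows essentially the same approach as the paper: the appendix proof invokes the higher-order $L^p$ estimates (Corollary \ref{coro:higher order estimate of solution of BSDE}), the standard SDE flow bounds from \cite{Pardoux1992}, writes $Y_s^{t,x}-Y_s^{t',x'}$ and the difference quotients $\triangle_h^i Y_s^{t,x}$ as linear BSDEs to obtain Kolmogorov-type increments, and iterates once for the second derivative. Your explicit discussion of working on the enlarged filtration $\mathcal{G}_t$ is a welcome clarification that the paper leaves implicit.
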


\begin{prop}\label{Prop: representation of Z of BSDE of FBSDE}
           Under the assumption of Theorem $\ref{Thm: converse of probalistic of
soulution of PDE}$, $\{Z_s^{t,x};(s,t)\in [0,T]^2, x \in R^m\}$ has
an a.s. continuous
           version which is given by $Z_s^{t,x}=\nabla Y_s^{t,x}(\nabla X_s^{t,x})^{-1}\sigma
           (X_s^{t,x})$. In particular, $Z_t^{t,x}=\nabla Y_t^{t,x}\sigma
           (x)$. Here $\displaystyle \Big(\nabla Y_s^{t,x}=\frac {\partial Y_s^{t,x}}{\partial x},$ $\displaystyle \nabla Z_s^{t,x}=\frac {\partial Z_s^{t,x}}{\partial x} \Big)$
           is the unique solution of
           \begin{equation*}
                      \begin{split}
                             \nabla Y_s^{t,x}=&h'(X_T^{t,x})\nabla
                             X_T^{t,x} +\int_s^T \big(f'_x(r,X_r^{t,x},Y_r^{t,x},Z_r^{t,x},\alpha_r)\nabla
                             X_r^{t,x}+f'_y(r,X_r^{t,x},\\
                             &Y_r^{t,x},Z_r^{t,x},\alpha_r)\nabla
                             Y_r^{t,x}+f'_z(r,X_r^{t,x},Y_r^{t,x},Z_r^{t,x},\alpha_r)\nabla
                             Z_r^{t,x}\big)dr-\int_s^T Z_r^{t,x} dB_r.
                      \end{split}
           \end{equation*}
 \end{prop}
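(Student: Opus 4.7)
The plan is to formally differentiate the forward--backward system with respect to the initial condition $x$, check that the resulting linear BSDE is well-posed, justify the differentiation by a difference-quotient argument, and obtain the representation of $Z$ via the flow property combined with Proposition $\ref{Prop:continuous of the solution of BSDE of FBSDE}$.

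For the well-posedness of the candidate linear BSDE, Assumption $\ref{assp:coefficient of FBSDE}$ provides that $f'_y$ and $f'_z$ are uniformly bounded, so its driver is Lipschitz in $(\nabla Y,\nabla Z)$ with deterministic constants. The inhomogeneous terms $f'_x(r,X_r^{t,x},Y_r^{t,x},Z_r^{t,x},\alpha_r)\nabla X_r^{t,x}$ and $h'(X_T^{t,x})\nabla X_T^{t,x}$ are square-integrable thanks to the polynomial growth of $f'_x,h'$ combined with the $L^p$-moment bounds on $\nabla X^{t,x}$ recalled before Proposition $\ref{Prop:continuous of the solution of BSDE of FBSDE}$. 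Theorem $\ref{Thm: existence of the solution of BSDE}$ then yields a unique solution $(\nabla Y^{t,x},\nabla Z^{t,x})\in S^2\times M^2$ to the linearized BSDE displayed in the statement.

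I next show that the formal derivative coincides with this solution. Fix a coordinate direction $e_j$ and $h\neq 0$; set $\Delta^h Y_s:=h^{-1}(Y_s^{t,x+he_j}-Y_s^{t,x})$ and $\Delta^h Z_s:=h^{-1}(Z_s^{t,x+he_j}-Z_s^{t,x})$. The pair $(\Delta^h Y,\Delta^h Z)$ solves a BSDE whose driver is obtained by integrating the Jacobian of $f$ along the segment joining $(X^{t,x},Y^{t,x},Z^{t,x})$ and $(X^{t,x+he_j},Y^{t,x+he_j},Z^{t,x+he_j})$. Subtracting the linearized BSDE and applying the standard a priori estimate available under the filtration $\mathcal{G}_t=\mathcal{F}_t^B\vee\mathcal{F}_T^\alpha\vee\mathcal{N}$ of Step 1 of the proof of Theorem $\ref{Thm: existence of the solution of BSDE}$ gives
\begin{equation*}
E\sup_{t\le s\le T}|\Delta^h Y_s-\nabla Y_s^{t,x}e_j|^2+E\int_t^T|\Delta^h Z_s-\nabla Z_s^{t,x}e_j|^2\,ds\longrightarrow 0
\end{equation*}
as $h\to 0$, using $L^p$-continuity of $x\mapsto X^{t,x}$ and dominated convergence, while a Kolmogorov-type argument produces a jointly continuous version of $(s,t,x)\mapsto Z_s^{t,x}$. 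For the representation, uniqueness of the FBSDE yields the flow identities $Y_s^{t,x}=Y_s^{s,X_s^{t,x}}$ and $Z_s^{t,x}=Z_s^{s,X_s^{t,x}}$ on $\{t\le s\le T\}$; setting $u(s,y):=Y_s^{s,y}$, Proposition $\ref{Prop:continuous of the solution of BSDE of FBSDE}$ gives $u\in C^{0,2}$. Applying It\^{o}'s formula to $u(\cdot,X_\cdot^{t,x})$ on $[s,T]$ and identifying the martingale part against BSDE $(\ref{Eq:BSDE of FBSDE})$ forces $Z_s^{t,x}=\partial_y u(s,X_s^{t,x})\sigma(X_s^{t,x})$; combined with the chain-rule identity $\nabla Y_s^{t,x}=\partial_y u(s,X_s^{t,x})\nabla X_s^{t,x}$ this produces the claimed formula $Z_s^{t,x}=\nabla Y_s^{t,x}(\nabla X_s^{t,x})^{-1}\sigma(X_s^{t,x})$, and at $s=t$ gives $Z_t^{t,x}=\nabla Y_t^{t,x}\sigma(x)$.

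The main obstacle is the difference-quotient convergence in the non-standard filtration $\mathcal{G}_t$ of Section $\ref{section:BSDE}$: one must verify that the Jacobian-averaged driver appearing in the BSDE for $(\Delta^h Y,\Delta^h Z)$ is $\mathcal{G}_t$-adapted so that Theorem $\ref{Thm: existence of the solution of BSDE}$ applies, and establish a uniform-in-$h$ integrable majorant in order to pass to the limit via dominated convergence. Because the Markov chain $\alpha$ enters $f$ only as a parameter independent of $B$, the Lipschitz constants of $f$ in $(y,z)$ remain deterministic, the stability estimates of Theorem $\ref{Thm: existence of the solution of BSDE}$ translate verbatim, and the argument closes.
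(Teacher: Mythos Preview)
Your proposal diverges from the paper's proof. The paper obtains the representation of $Z$ via Malliavin calculus: it sets up a derivative operator $D_t$ on cylindrical functionals $F=f(\varphi,B(h_1),\dots,B(h_n))$ with $\varphi\in L^2(\mathcal F_T^\alpha)$, builds the Sobolev space $D^{1,2}$, and then invokes the argument of Pardoux--Peng (1994, Proposition~2.3), where the key identity is $Z_s^{t,x}=D_sY_s^{t,x}$ together with the chain rule $D_\theta X_s^{t,x}=\nabla X_s^{t,x}(\nabla X_\theta^{t,x})^{-1}\sigma(X_\theta^{t,x})$. No use of $u(s,y)=Y_s^{s,y}$ or It\^o's formula is made at this stage.

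Your route through the flow identity and It\^o's formula has a genuine gap. Proposition~\ref{Prop:continuous of the solution of BSDE of FBSDE} provides only that $(s,t,x)\mapsto Y_s^{t,x}$ is of class $C^{0,0,2}$; in particular, for $u(s,y)=Y_s^{s,y}$ you only know $u\in C^{0,2}$, and you have no $\partial_s u$. Classical It\^o's formula therefore cannot be applied to $u(\cdot,X_\cdot^{t,x})$ to read off the martingale part as $\partial_y u(s,X_s^{t,x})\sigma(X_s^{t,x})\,dB_s$. Worse, $u(s,\cdot)$ is a \emph{random} field (it is $\mathcal F_{s,T}^{\alpha}$-measurable), so even $C^{1,2}$ regularity would not suffice without an It\^o--Kunita--Wentzell type formula adapted to this setting. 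Finally, note that the passage you are attempting---identifying $Z$ through It\^o's formula on $u$---is exactly what Theorem~\ref{Thm: converse of probalistic of soulution of PDE} accomplishes, and that theorem \emph{uses} Proposition~\ref{Prop: representation of Z of BSDE of FBSDE} as an input (to know that $Z_t^{t,x}=\nabla Y_t^{t,x}\sigma(x)$ is continuous and to pass to the limit in the discretization). Invoking that machinery here is circular.

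Your treatment of the linearized BSDE and the difference-quotient convergence is fine (and essentially reproduces what the paper already does inside the proof of Proposition~\ref{Prop:continuous of the solution of BSDE of FBSDE}), but it does not by itself yield the formula for $Z$. To close the argument without circularity you need an independent mechanism identifying $Z_s^{t,x}$ pointwise in $s$; the paper's choice is the Malliavin representation $Z=DY$, which bypasses any time-regularity requirement on $u$.
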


 \noindent {{\bf Proof of Theorem \ref{Thm: converse of probalistic of soulution of PDE}:}}
Let $t=t_0<t_1<\cdots <t_n=T$, we have
 \begin{equation*}
                      \begin{split}
                      &h(x)-u(t,x)\\
                      = \ &u(T,x)-u(t,x) \\
                                           =&\sum_{i=0}^{n-1}\left(u(t_{i+1},x)-u(t_i,x)\right)\\
                     =&\sum_{i=0}^{n-1}\left(u(t_{i+1},x)-u(t_{i+1},X_{t_{i+1}}^{t_i,x})+u(t_{i+1},X_{t_{i+1}}^{t_i,x})-u(t_i,x)\right).
                      \end{split}
    \end{equation*}
    Since $u(t_{i+1},X_{t_{i+1}}^{t_i,x})=Y_{t_{i+1}}^{t_{i+1},X_{t_{i+1}}^{t_i,x}}=Y_{t_{i+1}}^{t_i,x}$,
   we obtain the following from BSDE (\ref{Eq:BSDE of FBSDE})
 \begin{equation*}
                      \begin{split}
                     &u(t_{i+1},X_{t_{i+1}}^{t_i,x})-u(t_i,x)\\
                    = \ &Y_{t_{i+1}}^{t_i,x}-Y_{t_{i}}^{t_i,x}\\
                    = \ & -\int_{t_i}^{t_{i+1}} f(r,X_r^{t_i,x},Y_r^{t_i,x},Z_r^{t_i,x},\alpha_r)dr+
                                   \int_{t_i}^{t_{i+1}}
                                   Z_r^{t,x}dB_r.
                      \end{split}
    \end{equation*}
   It is known that $u(t,\cdot) \in
   C^2(R^m)$ from Proposition \ref{Prop:continuous of the solution of BSDE of
   FBSDE}. Then, with It\^o's formula, we  get
 \begin{align*}
                      \begin{split}
                      &h(x)-u(t,x)\\
                      = \ & \sum_{i=0}^{n-1}\bigg(\int_{t_i}^{t_{i+1}}\mathcal{L}u(t_{i+1},X_{r}^{t_i,x})dr-
                   \int_{t_i}^{t_{i+1}} (\nabla u
                                    \sigma)(t_{i+1},X_{r}^{t_i,x})dB_r\\
                   &                                   -\int_{t_i}^{t_{i+1}} f(r,X_r^{t_i,x},Y_r^{t_i,x},Z_r^{t_i,x},\alpha_r)dr+
                                   \int_{t_i}^{t_{i+1}} Z_r^{t,x}dB_r
                                   \bigg)\\
                             =&-\sum_{i=0}^{n-1}\int_{t_i}^{t_{i+1}}\left(\mathcal{L}u(t_{i+1},X_{r}^{t_i,x})+f(r,X_r^{t_i,x},
                             Y_r^{t_i,x},Z_r^{t_i,x},\alpha_r)\right)dr\\
                                    &
                                   + \sum_{i=0}^{n-1}\int_{t_i}^{t_{i+1}}\left(Z_r^{t_i,x}-(\nabla u
                                    \sigma)(t_{i+1},X_{r}^{t_i,x})\right)dB_r.
                       \end{split}
    \end{align*}
    From Proposition \ref{Prop:continuous of the solution of BSDE of
    FBSDE} and Proposition \ref{Prop: representation of Z of BSDE of
    FBSDE},  letting $\displaystyle \triangle =\sup_{0\leq i \leq n-1} |t_{i+1}-t_i| \rightarrow 0
    $, we have
     \begin{equation*}
          u(t,x)=h(x)+\int_t^T\left(\mathcal{L}u(r,x)+f(r,x,u(r,x),(\nabla u
\sigma)(r, x),\alpha_r)\right)dr,
  \end{equation*}
  here $u \in C^{0,2}([0,T]\times R^m; R^k)$. The uniqueness  property is
  followed from Theorem \ref{Thm: probalistic of soulution of PDE}
  and the uniqueness of the solution of  BSDE (\ref{Eq:BSDE of FBSDE}). \qed
  \subsection{Homogenization of PDEs system with a singularly perturbed  Markov chain}

Now we can give the application of our theoretical result in previous section (Theorem
3.1): homogenization of PDEs system with a singularly perturbed  Markov chain.

Consider the following sequence of semi-linear backward PDE with a
singularly perturbed Markov chain, indexed by $\varepsilon >0$, for $t \in [0, T],  x \in R^m,$
 \begin{equation}\label{Eq:SPDEs with parameter}
          u^{\varepsilon}(t,x)=h(x)+\int_t^T\left(\mathcal{L}u^{\varepsilon}(r,x)+f(r,x,u^{\varepsilon}(r,x),\alpha_r^{\varepsilon})\right)dr,\quad
   \end{equation}
Here  $\alpha^{\varepsilon}$ is the singularly perturbed Markov chain
which is stated in subsection \ref{subsection:singularly-perturbed Markov
chain}. We have the following homogenization result.
  \begin{thm} \label{Thm: homogenization of BSDE}
      Under Assumption $\ref{assp2:f of BSDE
      }$ and Assumption $\ref{assp:coefficient of FBSDE}$, PDE $(\ref{Eq:SPDEs with
      parameter})$ has a classical solution $\{u^{\varepsilon}(t,x);0\leq t\leq T,x\in R^m\}$.  As $\varepsilon \rightarrow
                0$,      the sequence of $u^{\varepsilon}$ converges in distribution
      to a                process $u$, where $u(t,x)$ is the classical solution of the following
               PDE with the limit averaged Markov chain $\bar{\alpha}$
                 \begin{equation}\label{Eq:limt of SPDEs }
                          u(t,x)=h(x)+\int_t^T\left(\mathcal{L}u(r,x)+ \bar{f}(r,x,u(r,x),\bar{\alpha}_r)\right)dr,\quad
                           0\leq t\leq T.
                \end{equation}
                Here  $\bar{f}$ is the average of $f$ defined as
                $ \displaystyle
                \bar{f}(t,x,u,i)=\sum_{j=1}^{m_i}\nu_j^if(t,x,u,s_{ij})$, for $i \in\bar{ \mathcal{M}}=\{1,\cdots,l\}$.
\end{thm}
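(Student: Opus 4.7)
The plan is to pass through the probabilistic representation developed in the previous subsection and then invoke the weak convergence result of Theorem \ref{Thm: weak convergence of Y Z}. The argument has three natural stages: (a) set up the FBSDE whose backward component encodes $u^{\varepsilon}$, (b) apply Theorem \ref{Thm: converse of probalistic of soulution of PDE} to obtain existence of a classical solution of \eqref{Eq:SPDEs with parameter} and the identity $u^{\varepsilon}(t,x)=Y^{\varepsilon,t,x}_{t}$, and (c) use the BSDE asymptotics to pass to the limit, identifying the limit via the same PDE--BSDE correspondence applied to the averaged data $(\bar{f},\bar{\alpha})$.

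For $(t,x)\in[0,T]\times R^{m}$, I would introduce the forward SDE \eqref{Eq:SDE of FBSDE} for $X^{t,x}$ (independent of $\varepsilon$) and the coupled BSDE
\begin{equation*}
Y^{\varepsilon,t,x}_{s}=h(X^{t,x}_{T})+\int_{s}^{T}f(r,X^{t,x}_{r},Y^{\varepsilon,t,x}_{r},\alpha^{\varepsilon}_{r})\,dr-\int_{s}^{T}Z^{\varepsilon,t,x}_{r}\,dB_{r},\quad t\le s\le T.
\end{equation*}
Since $f$ satisfies Assumption \ref{assp:coefficient of FBSDE} and $\alpha^{\varepsilon}$ is a Markov chain of the form treated in Section \ref{section:BSDE}, Theorem \ref{Thm: existence of the solution of BSDE} gives unique solvability. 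Freezing $X^{t,x}$ reduces the generator to one fitting Assumption \ref{assp:f of BSDE}, and Theorem \ref{Thm: converse of probalistic of soulution of PDE} then yields both the existence of a classical solution $u^{\varepsilon}$ of \eqref{Eq:SPDEs with parameter} and the representation $u^{\varepsilon}(t,x)=Y^{\varepsilon,t,x}_{t}$. The analogous construction with averaged data $\bar{f}$ and limit chain $\bar{\alpha}$ produces the BSDE solved by $(Y^{t,x},Z^{t,x})$ and the classical solution $u(t,x)=Y^{t,x}_{t}$ of \eqref{Eq:limt of SPDEs }.

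Having fixed $(t,x)$, I would then apply Theorem \ref{Thm: weak convergence of Y Z} to the BSDE on $[t,T]$ with terminal condition $\xi=h(X^{t,x}_{T})\in L^{2}(\mathcal{F}_{T}^{B};R^{k})$ and generator $(s,y,i)\mapsto f(s,X^{t,x}_{s},y,i)$. This provides the convergence in distribution of $\bigl(Y^{\varepsilon,t,x}_{s},\int_{t}^{s}Z^{\varepsilon,t,x}_{r}dB_{r}\bigr)$ to $\bigl(Y^{t,x}_{s},\int_{t}^{s}Z^{t,x}_{r}d\bar{B}_{r}\bigr)$ in $D(t,T;R^{2k})$ under the Jakubowski S-topology. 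Evaluating the first marginal at $s=t$ and using the continuity of $Y^{t,x}$ at the initial point (together with the remark after Theorem \ref{Thm: weak convergence of Y Z}) yields $u^{\varepsilon}(t,x)=Y^{\varepsilon,t,x}_{t}\Rightarrow Y^{t,x}_{t}=u(t,x)$ for every $(t,x)$.

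The main obstacle I expect is not the pointwise passage to the limit but upgrading it to convergence in distribution of $u^{\varepsilon}$ as a \emph{process/random field} in $(t,x)$, because Theorem \ref{Thm: weak convergence of Y Z} is formulated pointwise in the initial data. To handle this I would argue along two lines: first, establish tightness of $\{u^{\varepsilon}\}$ on compacts in $(t,x)$ via the uniform $L^{2}$ bounds of Proposition \ref{Prop: boundness of Y M} combined with the Lipschitz regularity of $(t,x)\mapsto Y^{\varepsilon,t,x}_{t}$ inherited from Assumption \ref{assp:coefficient of FBSDE} and the standard flow estimates for $X^{t,x}$; second, identify every weak limit point by the finite-dimensional convergence obtained above, invoking the uniqueness of the classical solution of \eqref{Eq:limt of SPDEs } (Theorem \ref{Thm: converse of probalistic of soulution of PDE} applied to the averaged data) to conclude that all subsequential limits coincide with $u$. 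The remaining technicalities, such as measurability of $u^{\varepsilon}(t,x)$ with respect to $\mathcal{F}^{\alpha^{\varepsilon}}_{t,T}$, have already been settled in Section \ref{section:BSDE}.
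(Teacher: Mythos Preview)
Your proposal is correct and follows essentially the same route as the paper: represent $u^{\varepsilon}(t,x)$ via Theorem~\ref{Thm: converse of probalistic of soulution of PDE} as $Y^{\varepsilon,t,x}_{t}$, apply Theorem~\ref{Thm: weak convergence of Y Z} to the BSDE with generator $(s,y,i)\mapsto f(s,X^{t,x}_{s},y,i)$ to get convergence in distribution to $Y^{t,x}_{t}$, and identify the limit as the classical solution of \eqref{Eq:limt of SPDEs } by a second appeal to Theorem~\ref{Thm: converse of probalistic of soulution of PDE}. Your additional discussion of tightness in $(t,x)$ goes beyond what the paper actually proves---its argument stops at pointwise convergence in distribution of $u^{\varepsilon}(t,x)$ for each fixed $(t,x)$ and does not address process-level convergence---so that part is a strengthening rather than a requirement for matching the paper's proof.
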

\begin{proof} From Theorem \ref{Thm: converse of
probalistic of soulution of PDE}, we know that
$\{u^{\varepsilon}(t,x)=Y_t^{\varepsilon,t,x}; 0 \leq t\leq T, x \in
R^m\}$ is the unique classical solution of PDE (\ref{Eq:SPDEs with
parameter}) where $\{Y_s^{\varepsilon,t,x};t \leq s \leq T\}$
satisfies
  \begin{equation}\label{Eq:BSDE of FBSDE with parameter}
                Y_s^{\varepsilon,t,x}=h(X_T^{t,x})+\int_s^Tf(r,X_r^{t,x},Y_r^{\varepsilon,t,x},\alpha_r^{\varepsilon})dr
                -\int_s^TZ_r^{\varepsilon,t,x}dB_r,
 \end{equation}
 and $\{X_s^{t,x}; t\leq s \leq T\}$ satisfies SDE (\ref{Eq:SDE of
 FBSDE}). $\forall  (t,x) \in [0,T] \times R^m$,  from Theorem \ref{Thm: weak
convergence of Y
 Z}, we obtain that $Y_t^{\varepsilon,t,x}$ converges  in distribution to
 $Y_t^{t,x}$ as $\varepsilon \rightarrow 0$ where $\{Y_s^{t,x};t
\leq s \leq T\}$ satisfies
  \begin{equation}\label{Eq:limit of BSDE of FBSDE with parameter}
                Y_s^{t,x}=h(X_T^{t,x})+\int_s^T\bar{f}(r,X_r^{t,x},Y_r^{t,x},\bar{\alpha}_r)dr-\int_s^TZ_r^{t,x}d\bar{B}_r,
   \end{equation}
 Again from Theorem \ref{Thm: converse of probalistic of soulution of
 PDE}, we know that $u(t,x)=Y_t^{t,x}$ is the unique classical solution
to  PDE (\ref{Eq:limt of SPDEs }), and the results are followed.
 \end{proof}
\section{Conclusion}
In this paper, stemmed from the adjoint equation for deriving the optimal control of stochastic LQ control problem with Markovian jumps, we study the solvability of one kind of BSDE with the generator depending on a Markov switching. Then, we consider the case that the Markov chain has a large state space. To reduce the complexity, we adopt a hierarchical approach and study the asymptotic property of BSDE with a singularly perturbed Markov chain. Also, as an application, we present the homogenization property of one system of PDE with   a singularly perturbed Markov chain.

It is noted that in this paper, we only give the homogenization result of PDEs system with Markov chains when
   there exists classical solution under smooth assumptions. In the successive work, we will study
   the Sobolev space weak solution for the related PDEs system and homogenization problem by virtue of BSDEs with
   Markov chain. Some applications of this kind of BSDEs in optimal control and mathematics financial problems would
   also be interesting
   to investigate in our future research.


\section*{Acknowledgements}
It is our great pleasure to express the thankfulness to Professor
Qing Zhang in University of Georgia for many useful discussions and
suggestions.

 \appendix

\section{Proof of Proposition \ref{Prop:continuous of the solution of
BSDE of FBSDE} and Proposition \ref{Prop: representation of Z of
BSDE of FBSDE}}

The proof of Proposition \ref{Prop:continuous of the solution of
BSDE of FBSDE} and Proposition \ref{Prop: representation of Z of
BSDE of FBSDE} follow a classic approach as shown in \cite{Pardoux1992, Pardoux1994}. Here, we will give a sketch of the proof.
   Firstly, we present a higher order moment estimation
to the solution of BSDE \eqref{eq:BSDE}.

\begin{cor} \label{coro:higher order estimate of solution of BSDE}
             Assume that        for some
             $p>2$, $ E|\xi|^p+E\int_0^T|f(t,0,$ $0,\alpha_t)|^pdt <
             \infty,$  under Assumption $\ref{assp:f of BSDE}$,  we have the
             following estimation for BSDE $(\ref{eq:BSDE})$,
             \\[-2mm]
             $$\displaystyle
                     E\left(\sup_{0\leq s\leq
                     t}|Y_s|^p+(\int_0^tZ_s^2ds)^{\frac p 2}\right)
               < \infty, \quad \forall  0 \leq t\leq T.
             $$
\end{cor}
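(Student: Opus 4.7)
The plan is to derive $L^p$ estimates by applying It\^o's formula to $|Y_s|^p$, controlling the nonlinear terms via the Lipschitz assumption on $f$ and Young's inequality, and then combining with the Burkholder--Davis--Gundy inequality and Gronwall's lemma, in the same spirit as the $L^2$ bound obtained in Proposition 3.2. Since $p>2$, the function $y \mapsto |y|^p$ is twice continuously differentiable, so there is no need for a smoothing $(\,|Y_s|^2 + \epsilon\,)^{p/2}$ device.

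The first step is to apply It\^o's formula on $[t,T]$, which yields
\[
|Y_t|^p + \tfrac{p(p-1)}{2}\!\int_t^T |Y_s|^{p-2}|Z_s|^2 ds
= |\xi|^p + p\!\int_t^T |Y_s|^{p-2}Y_s' f(s,Y_s,Z_s,\alpha_s)\,ds - p\!\int_t^T |Y_s|^{p-2}Y_s' Z_s dB_s.
\]
Using the Lipschitz bound $|f(s,Y_s,Z_s,\alpha_s)| \le |f(s,0,0,\alpha_s)| + \mu(|Y_s|+|Z_s|)$ together with Young's inequality, the drift integrand is bounded by
\[
C|Y_s|^p + \tfrac{1}{p}|f(s,0,0,\alpha_s)|^p + \tfrac{p(p-1)}{4}|Y_s|^{p-2}|Z_s|^2,
\]
so that absorbing the last term into the left-hand side produces
\[
|Y_t|^p + c\!\int_t^T |Y_s|^{p-2}|Z_s|^2 ds \le |\xi|^p + C\!\int_t^T |Y_s|^p ds + \tfrac{1}{p}\!\int_0^T |f(s,0,0,\alpha_s)|^p ds + \mathcal{M}_{t,T},
\]
where $\mathcal{M}_{t,T}$ is the remaining stochastic integral. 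Standard localisation shows that after taking expectations $\mathcal{M}_{t,T}$ vanishes, so Gronwall's lemma applied to $t\mapsto E|Y_t|^p$ gives $\sup_{0\le t\le T} E|Y_t|^p < \infty$ using the hypothesis on $\xi$ and $f(\cdot,0,0,\alpha_\cdot)$.

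Next I upgrade this pointwise bound to a bound on $E\sup_{0\le s\le t}|Y_s|^p$. Taking the supremum over $s\in[0,t]$ in the It\^o identity and invoking BDG gives
\[
E\sup_{0\le s\le t}\left|\int_s^T |Y_r|^{p-2}Y_r' Z_r dB_r\right|
\le C\,E\Big(\sup_{0\le r\le T}|Y_r|^p\Big)^{1/2}\Big(\int_0^T |Y_r|^{p-2}|Z_r|^2 dr\Big)^{1/2},
\]
and a second Young's inequality lets me absorb $\tfrac12 E\sup|Y|^p$ back into the left-hand side. The remaining control of $E\int_0^T |Y_r|^{p-2}|Z_r|^2 dr$ already follows from the previous step. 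Finally, rewriting $\int_0^T Z_s dB_s = Y_0 - \xi - \int_0^T f(s,Y_s,Z_s,\alpha_s)ds$ and applying BDG in the reverse direction bounds $E(\int_0^T |Z_s|^2 ds)^{p/2}$ by a constant multiple of $E|Y_0|^p + E|\xi|^p + E(\int_0^T |f(s,Y_s,Z_s,\alpha_s)|ds)^p$, and this last quantity is finite thanks to the Lipschitz estimate of $f$ together with the just-established bound on $E\sup|Y|^p$.

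The main technical obstacle is the delicate absorption argument: both the term $|Y_s|^{p-2}|Z_s|^2$ produced by the Lipschitz bound on $f$ and the BDG-generated term involving $\sup|Y|^p$ must be absorbed into the left-hand side with constants carefully chosen independent of $t$; doing this rigorously requires first working on a localising stopping-time sequence $\tau_n$ (so that the stochastic integrals are genuine martingales and all quantities are a priori finite) and then letting $\tau_n\uparrow T$ via Fatou's lemma. Once this bookkeeping is handled, the assertion follows by combining the two bounds.
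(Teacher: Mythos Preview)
Your proposal is correct and follows essentially the same route as the paper: apply It\^o's formula to $|Y_t|^p$, use the Lipschitz bound on $f$ together with Young's inequality to absorb the $|Y_s|^{p-2}|Z_s|^2$ term, take expectations and apply Gronwall to get $\sup_t E|Y_t|^p+E\int_0^T|Y_s|^{p-2}|Z_s|^2\,ds<\infty$, then upgrade to $E\sup_t|Y_t|^p$ via BDG, and finally control $E\big(\int_0^T|Z_s|^2\,ds\big)^{p/2}$ by rewriting $\int_0^t Z_s\,dB_s$ through the BSDE and applying BDG again. Your explicit localisation/Fatou step is a bit more careful than what the paper writes out (the paper simply refers to Lemma~2.1 of \cite{Pardoux1992} for this point), but the argument is otherwise identical.
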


\begin{proof}Applying It\^o's formula to $|Y_t|^p$ from
$t$ to $T$, we can get
\begin{equation*}
       \begin{split}
               & |Y_t|^p+\frac {p(p-1)} {2}
               \int_t^T|Y_s|^{p-2}|Z_s|^2ds\\
               = & \ |\xi|^p+p\int_t^T|Y_s|^{p-2}
               Y_s
               f(s,Y_s,Z_s,\alpha_s)ds-p\int_t^T|Y_s|^{p-2}Y_sZ_sdB_s.
       \end{split}
\end{equation*}
By the same technique as that in Lemma 2.1 of  Pardoux and Peng
\cite{Pardoux1992}, we obtain that
\begin{equation*}
       \begin{split}
               & E|Y_t|^p+\frac {p(p-1)} {2}
               E\int_t^T|Y_s|^{p-2}|Z_s|^2ds\\
               \leq & \ E|\xi|^p+p E \int_t^T|Y_s|^{p-2}
               Y_s
               f(s,Y_s,Z_s,\alpha_s)ds.
       \end{split}
\end{equation*}
From Assumption \ref{assp:f of BSDE}, using H$\ddot{\textrm{o}}$lder
and Young's inequalities, there exist $K >0$ and $C$ such that
\begin{equation*}
       \begin{split}
               & E|Y_t|^p+K
               E\int_t^T|Y_s|^{p-2}|Z_s|^2ds\\
               \leq & \ E|\xi|^p+C E \int_t^T\left(|Y_s|^{p}+
               |f(s,0,0,\alpha_s)|^p\right)ds.
       \end{split}
\end{equation*}
It follows from Gronwall's lemma that
$$\sup_{0\leq t \leq T}E|Y_t|^p+E \int_0^T |Y_t|^{p-2}|Z_t|^2dt < \infty.$$
Since
\begin{equation*}
       \begin{split}
                |Y_t|^p
               \leq  \ |\xi|^p+p\int_t^T|Y_s|^{p-2}
               Y_s
               f(s,Y_s,Z_s,\alpha_s)ds-p\int_t^T|Y_s|^{p-2}Y_sZ_sdB_s,
       \end{split}
\end{equation*}
Burkholder-Davis-Gundy inequality yields that $E(\sup_{0\leq t \leq T}|Y_t|^p) < \infty$.

Now we prove $E(\int_0^tZ_s^2ds)^{\frac p 2}
               < \infty$. Since $$\int_0^tZ_s dB_s=Y_t-Y_0+\int_0^tf(s,Y_s,Z_s,\alpha_s)ds,$$
$$\sup_{0\leq t \leq T}|\int_0^tZ_s dB_s| \leq 2 \sup_{0\leq t \leq T} |Y_t|+\int_0^T|f(s,Y_s,Z_s,\alpha_s)|ds,$$
the result is  followed from Assumption \ref{assp:f of BSDE} and
Burkholder-Davis-Gundy inequality.
\end{proof}

\begin{lem}(Lemma 2.7 in
\cite{Pardoux1992}) For any $p>2$, there exists a  constant $c_p$ such that
for any $t,t' \in [0,T]$, $x,x' \in R^m$, $i \in \{1,\cdots,d\}$,
$h,h' \in R \backslash \{0\}$,
$$E ( \sup_{0 \leq s \leq T} |X_s^{t,x}|^p) \leq c_p(1+|x|^p),$$
$$ E ( \sup_{0 \leq s \leq T} |X_s^{t,x}-X_s^{t',x'}|^p) \leq c_p(1+|x|^p) (|x-x'|^p+|t-t'|^{\frac{p}{2}}),$$
$$E ( \sup_{0 \leq s \leq T} |\triangle_h^i X_s^{t,x}|^p) \leq c_p,$$
$$E ( \sup_{0 \leq s \leq T} |\triangle_h^i X_s^{t,x}-\triangle_{h'}^i X_s^{t',x'}|^p) \leq c_p
(|x-x'|^p+|h-h'|^p+|t-t'|^{\frac{p}{2}}).$$ Here $\displaystyle
\triangle_h^ig(x)=\frac{g(x+he_i)-g(x)}{h}$, $1\leq i \leq d$, where
$e_i$ denotes the $i$th vector of an arbitrary orthonormal basis of
$R^m$.
\end{lem}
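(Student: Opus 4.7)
The plan is to prove the four estimates in sequence using standard SDE techniques: Itô's formula, the Burkholder-Davis-Gundy (BDG) inequality, and Gronwall's lemma. Since $b, \sigma \in C^3_{l,b}$, they are Lipschitz continuous and of linear growth with a common constant $K$, and this is essentially all that is needed.

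\textbf{Estimate (1).} For the first bound, I would apply Itô's formula to $|X_s^{t,x}|^p$ on $[t,s]$, take suprema and expectations, use BDG on the martingale term, and use $|b(x)| + |\sigma(x)| \le K(1+|x|)$. This yields an integral inequality of the form
\begin{equation*}
E\bigl(\sup_{t \le r \le s}|X_r^{t,x}|^p\bigr) \le c_p(1+|x|^p) + c_p \int_t^s E\bigl(\sup_{t \le u \le r}|X_u^{t,x}|^p\bigr)\, dr,
\end{equation*}
to which Gronwall's lemma applies. Extending $X_s^{t,x} = x$ for $s \le t$ yields the bound over $[0,T]$.

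\textbf{Estimate (2).} For the continuity in $(t,x)$, I would first bound $E(\sup_{s \le T}|X_s^{t,x}-X_s^{t,x'}|^p)$ by writing the SDE for the difference, using the Lipschitz property of $b,\sigma$, BDG, and Gronwall, giving a factor $|x-x'|^p$. Next, for $t<t'$, I would split $X_s^{t,x}-X_s^{t',x'} = (X_{t'}^{t,x}-x') + (X_s^{t',X_{t'}^{t,x}} - X_s^{t',x'})$; the first summand gives a $|t-t'|^{p/2}$ contribution via estimate (1) and isometry-type arguments, the second reduces to the initial-datum continuity just proved. Combining with (1) to handle the $(1+|x|^p)$ factor delivers the claimed bound.

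\textbf{Estimate (3).} The process $\Delta_h^i X_s^{t,x}$ satisfies the linear SDE
\begin{equation*}
\Delta_h^i X_s^{t,x} = e_i + \int_t^s \tilde{b}_h(r)\,\Delta_h^i X_r^{t,x}\,dr + \int_t^s \tilde{\sigma}_h(r)\,\Delta_h^i X_r^{t,x}\,dB_r,
\end{equation*}
where $\tilde{b}_h(r) = \int_0^1 b'\bigl(X_r^{t,x} + \theta(X_r^{t,x+he_i}-X_r^{t,x})\bigr)d\theta$, and similarly for $\tilde\sigma_h$. Since $b',\sigma'$ are bounded, $\tilde b_h$ and $\tilde\sigma_h$ are bounded uniformly in $h$ and $\omega$, so Itô's formula applied to $|\Delta_h^i X_s^{t,x}|^p$ together with BDG and Gronwall gives a constant bound independent of $h$, $t$, $x$.

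\textbf{Estimate (4)} is the main obstacle, as it is essentially a stability statement for the linearized SDE in (3) with respect to the parameters $(t,x,h)$. My plan is again a three-term decomposition: I would compare $\Delta_h^i X_s^{t,x}$ and $\Delta_{h'}^i X_s^{t',x'}$ by separately changing one parameter at a time. For each change, the difference $D_s$ of two linearized processes satisfies an SDE of the form
\begin{equation*}
D_s = D_{\text{init}} + \int_{\cdot}^s \tilde b_h(r)\,D_r\,dr + \int_{\cdot}^s \tilde\sigma_h(r)\,D_r\,dB_r + (\text{perturbation}),
\end{equation*}
where the perturbation involves the difference of the $\tilde b$/$\tilde\sigma$ coefficients. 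Because $b',\sigma'$ are Lipschitz (indeed $C^2$ with bounded derivatives), these coefficient differences are controlled by $|X_r^{t,x}-X_r^{t',x'}|$ and by $|h-h'|$, so estimate (2) and (3) feed directly into the perturbation term. A final application of BDG and Gronwall then assembles the three contributions $|x-x'|^p$, $|h-h'|^p$, $|t-t'|^{p/2}$. The bookkeeping is where care is required, but no new idea beyond (1)--(3) is needed.
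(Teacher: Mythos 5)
The paper does not prove this lemma at all: it is imported verbatim as Lemma 2.7 of Pardoux and Peng (1992), so there is no in-paper argument to compare your proposal against. Your sketch is the standard proof of these flow estimates (and essentially the one in the cited reference): It\^o's formula, Burkholder--Davis--Gundy and Gronwall for the first and third bounds, the flow property plus initial-datum continuity to extract the $|t-t'|^{p/2}$ term in the second, and a perturbation argument for the linearized (difference-quotient) equations in the fourth. The outline is correct; the only point needing real care is the one you flag in the fourth estimate, where the coefficient differences must be paired with the difference quotients via H\"older inside the BDG bound, which requires the $2p$-th moment versions of the earlier estimates --- so all four bounds should be established for every $p>2$ simultaneously before the bookkeeping closes.
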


\noindent {{\bf Proof of Proposition \ref{Prop:continuous of the solution of
BSDE of FBSDE}:}}  Since $$E ( \sup_{0 \leq s \leq T} |X_s^{t,x}|^p)
\leq c_p(1+|x|^p),$$ from the proof of Corollary \ref{coro:higher
order estimate of solution of BSDE}, $\forall  p>2$, there exist
$C_p$ and $q$ such that  $$ E\left(\sup_{0\leq s\leq
                     t}|Y_s^{t,x}|^p+(\int_0^t|Z_s^{t,x}|^2ds)^{\frac p
                     2}\right)
               \leq C_p(1+|x|^q).$$
               Note that for $t\vee t' \leq s\leq T$
\begin{equation*}
     \begin{split}
          &Y_s^{t,x}-Y_s^{t',x'}\\
          =& \ \bigg (\int_0^1h'(X_T^{t,x}+\lambda
          (X_T^{t,x}-X_T^{t',x'}))d\lambda \bigg)(X_T^{t,x}-X_T^{t',x'})\\
          &+\int_s^T \int_0^1 \bigg (f'_x(\Xi_{r,\lambda}^{t,x,t',x'},\alpha_r)(X_r^{t,x}-X_r^{t',x'})+f'_y(\Xi_{r,\lambda}^{t,x,t',x'},\alpha_r)(Y_r^{t,x}-Y_r^{t',x'})\\
          &+f'_z(\Xi_{r,\lambda}^{t,x,t',x'},\alpha_r)(Z_r^{t,x}-Z_r^{t',x'})\bigg)d\lambda dr-\int_s^T(Z_r^{t,x}-Z_r^{t',x'})dB_r,
     \end{split}
\end{equation*}
where $\displaystyle
     \Xi_{r,\lambda}^{t,x,t',x'}=(r,X_r^{t',x'}+\lambda
          (X_r^{t,x}-X_r^{t',x'}),Y_r^{t',x'}+\lambda
          (Y_r^{t,x}-Y_r^{t',x'}),Z_r^{t',x'}+\lambda
          (Z_r^{t,x}-Z_r^{t',x'})).$
Since $$ E ( \sup_{0 \leq s \leq T} |X_s^{t,x}-X_s^{t',x'}|^p) \leq
c_p(1+|x|^p) (|x-x'|^p+|t-t'|^{\frac{p}{2}}),$$ combing with the
proof of Corollary \ref{coro:higher order estimate of solution of
BSDE}, we can deduce that $\forall p\geq 2$, there exist $C_p$ and
$q$ such that
\begin{align*}
 &     E \left( \sup_{0 \leq s \leq T} |Y_s^{t,x}-Y_s^{t',x'}|^p +\left(\int_t^T |Z_s^{t,x}-Z_s^{t',x'}|^2ds\right)^{\frac{p}{2}}\right) \\
 \leq  \ \ & C_p(1+|x|^q) (|x-x'|^p+|t-t'|^{\frac{p}{2}}).
\end{align*}
Then using Kolmogorov's lemma, we know that $\{Y_s^{t,x};(s,t)\in
[0,T]^2, x\in R^m\}$ has an a.s. continuous version.

Next, we have
\begin{equation*}
     \begin{split}
     \triangle_h^iY_s^{t,x}=& \int_0^1h'(X_T^{t,x}+\lambda h
     \triangle_h^iX_T^{t,x})\triangle_h^iX_T^{t,x}d \lambda +
     \int_s^T \int_0^1 \big(f'_x(\Theta_{r,\lambda}^{t,x,h},\alpha_r)\triangle_h^iX_r^{t,x}\\
     &+
     f'_y(\Theta_{r,\lambda}^{t,x,h},\alpha_r)\triangle_h^iY_r^{t,x}
     +f'_z(\Theta_{r,\lambda}^{t,x,h},\alpha_r)\triangle_h^iZ_r^{t,x}\big)d\lambda
     dr- \int_s^T \triangle_h^iZ_r^{t,x}dB_r
     \end{split}
\end{equation*}
where $\Theta_{r,\lambda}^{t,x,h}=(r,X_r^{t,x}+\lambda h
     \triangle_h^iX_r^{t,x}, Y_r^{t,x}+\lambda h
     \triangle_h^iY_r^{t,x}, Z_r^{t,x}+\lambda h
     \triangle_h^iZ_r^{t,x})$.

     Since for each $p \geq 2$, there exists $c_p$ such that
     $$E ( \sup_{0 \leq s \leq T} |\triangle_h^i X_s^{t,x}|^p) \leq c_p.$$
     We can have the following estimation
     $$\displaystyle E\left(\sup_{t\leq s \leq T}|\triangle_h^i Y_s^{t,x}|^p+(\int_t^T|\triangle_h^i Z_s^{t,x}|ds)^{\frac{p}{2}}\right)
       \leq c_p(1+|x|^q+|h|^q).     $$
       Then we consider
       \begin{equation*}
     \begin{split}
   & \triangle_h^i Y_s^{t,x}-\triangle_{h'}^i Y_s^{t',x'}\\
   = & \int_0^1h'(X_T^{t,x}+\lambda h
     \triangle_h^iX_T^{t,x})\triangle_h^iX_T^{t,x}d \lambda -\int_0^1h'(X_T^{t',x'}+\lambda h
     \triangle_{h'}^iX_T^{t',x'})\triangle_{h'}^iX_T^{t',x'}d
     \lambda\\
     &+  \int_s^T \int_0^1 (f'_x(\Theta_{r,\lambda}^{t,x,h},\alpha_r)\triangle_h^iX_r^{t,x}-
      f'_x(\Theta_{r,\lambda}^{t',x',h'},\alpha_r)\triangle_{h'}^iX_r^{t',x'})d \lambda dr\\
     &+\int_s^T \int_0^1 (f'_y(\Theta_{r,\lambda}^{t,x,h},\alpha_r)\triangle_h^iY_r^{t,x}-
      f'_y(\Theta_{r,\lambda}^{t',x',h'},\alpha_r)\triangle_{h'}^iY_r^{t',x'})d \lambda
      dr\\
      &+\int_s^T \int_0^1 (f'_z(\Theta_{r,\lambda}^{t,x,h},\alpha_r)\triangle_h^iZ_r^{t,x}-
      f'_z(\Theta_{r,\lambda}^{t',x',h'},\alpha_r)\triangle_{h'}^iZ_r^{t',x'})d \lambda
      dr\\
     & - \int_s^T(
     \triangle_h^iZ_r^{t,x}-\triangle_{h'}^iZ_r^{t',x'})dB_r.
       \end{split}
\end{equation*}
It is noted that
$$E ( \sup_{0 \leq s \leq T} |\triangle_h^i X_s^{t,x}-\triangle_{h'}^i X_s^{t',x'}|^p) \leq c_p
(|x-x'|^p+|h-h'|^p+|t-t'|^{\frac{p}{2}}).$$ $\forall i \in
\mathcal{M}$, using similar arguments, we can show that
  \begin{equation*}
     \begin{split}
& E \left( \sup_{0 \leq s \leq T} |\triangle_h^i
Y_s^{t,x}-\triangle_{h'}^i Y_s^{t',x'}|^p+\left(\int_{t\wedge
t'}^T|\triangle_h^i Z_s^{t,x}-\triangle_{h'}^i
Z_s^{t',x'}|^2ds\right)^{{\frac{p}{2}}}\right) \\ \leq
 & \ c_p (1+|x|^q+|x'|^q+|h|^q+|h'|^q) \times
(|x-x'|^p+|h-h'|^p+|t-t'|^{\frac{p}{2}}).
 \end{split}
\end{equation*}
The existence of a continuous derivative of $Y_s^{t,x}$ with respect
to $x$, and a mean-square derivative of $Z_s^{t,x}$ with respect to
$x$ follow from this estimation. And the existence of a continuous
second derivative of $Y_s^{t,x}$ with respect to $x$ can be proved
in a similar scheme.
 Using similar arguments as in the proof of Theorem 2.9 in
\cite{Pardoux1992}, we can show that
$\{Y_{s}^{t,x};(s,t) \in [0,T]^2, x\in R^m\}$ has an a.s. continuous
version. \qed

\bigskip

\noindent {{\bf Proof of Proposition \ref{Prop: representation of Z
of BSDE of FBSDE}:}} For any random variable $F$ of the form
$F=f(\varphi, B(h_1),\cdots,$ $B(h_n))$ with $f \in
C_0^{\infty}(R^{n})$, $\varphi \in L_{\mathcal{F}_T^{\alpha}}^2$,
$h_1,\cdots,h_n \in L^2_{\mathcal{F}_t}(0,T;R^d)$ and
$B(h_i)=\int_0^Th_i(t)dB_t$, where
$\mathcal{F}_t=\mathcal{F}^{\alpha}_{t,T} \vee \mathcal{F}_t^B$, let
$\displaystyle D_tF=\sum_{i=1}^nf'_i(B(h_1),\cdots,$ $B(h_n))h_i(t), $
$0 \leq t\leq T$. For such $F$, we define its norm as
$$\|F\|_{1,2}=\left(E\left(F^2+\int_0^T|D_tF|^2dt\right)\right)^{\frac{1}{2}}.$$
Denote $S$ as the set of random variables of the above form, we can
define sobolev space: $D^{1,2}=\bar{S}^{\|\cdot\|_{1,2}}.$ Using
the same argument in Proposition 2.3 in \cite{Pardoux1994},
we can obtain the result.\qed

%







\end{document}